\newcommand{\scr}[1]{\mathscr{#1}}
\newcommand{\bb}[1]{\mathbb{#1}}
\newcommand{\cal}[1]{\mathcal{#1}}
\newcommand{\N}{\mathbb{N}}	% Numeri naturali
\newcommand{\R}{\mathbb{R}}	% Numeri reali
\newcommand{\Co}{\mathscr{C}}	% Funzioni continue
\newcommand{\ssubset}{\Subset}	% Sottoinsieme compatto
\newcommand{\dd}{\,\mathrm{d}}	% 'd' di derivata
\newcommand{\de}{\partial}		% Derivata parziale
\renewcommand{\div}{\operatorname{div}}	% div di Divergenza.
\newcommand{\grad}{\nabla}
\newcommand{\vf}{\varphi}
\newcommand{\eps}{\varepsilon}
\newcommand{\gt}{\tau}
\newcommand{\HH}{\bb H}
\newcommand{\Cow}{\mathscr{C}^1_{\mathbb{W}}}
\newcommand{\LL}{\mathcal L}
\newcommand{\LI}{{L^\infty}}
\DeclareMathOperator{\Lip}{Lip}
\newcommand{\spt}{{\rm spt}}
\newcommand{\one}{{\mathds 1\!}}	% mappa che vale 1
   \def\XXint#1#2#3{{\setbox0=\hbox{$#1{#2#3}{\int}$}
        \vcenter{\hbox{$#2#3$}}\kern-.5\wd0}}
\theoremstyle{plain}
\newtheorem{proposition}{Proposition}[section]
\newtheorem{theorem}[proposition]{Theorem}
\newtheorem{lemma}[proposition]{Lemma}
\newtheorem{corollary}[proposition]{Corollary}
\theoremstyle{definition}
\newtheorem{definition}[proposition]{Definition}
\declaretheorem[name=Remark,sibling=proposition,qed={\lower-0.3ex\hbox{$\blacklozenge$}}]{remark}
\theoremstyle{remark}
\title[Bernstein problem in Heisenberg group]{The Bernstein problem for Lipschitz intrinsic graphs in the Heisenberg group}
\subjclass[2010]{%
53C17,% sub-Riemannian geometry
49Q20% Variational problems in a geometric measure-theoretic setting
}
\keywords{%
Sub-Riemannian Geometry, %
Sub-Riemannian Perimeter, %
Heisenberg Group, %
Bernstein Problem%
}
\author[Nicolussi]{Sebastiano Nicolussi}
\address[Nicolussi]{School of Mathematics\\ University of Birmingham\\ B152TT Birmingham \\ United Kingdom}
\author[Serra Cassano]{Francesco Serra Cassano}
\address[Serra Cassano]{Dipartimento di Matematica, Università di Trento, Via Sommarive 14, 38123 Trento, Italy}
\thanks{
	Both authors have been supported by the European Unions Seventh Framework Programme, Marie Curie Actions-Initial Training Network, under grant agreement n. 607643, ``Metric Analysis For Emergent Technologies (MAnET)''.\\
	S.~N. is supported by EPSRC Grant "Sub-Elliptic Harmonic Analysis" (EP/P002447/1).\\
	F.~S.C.~is supported by MIUR, Italy, GNAMPA of INDAM and University of Trento.
%	and by the European Unions Seventh Framework Programme, Marie Curie Actions-Initial Training Network, under grant agreement n. 607643, ``Metric Analysis For Emergent Technologies (MAnET)''
}
\date{\today}
\begin{document}
\begin{abstract}
We prove that, in the %sub-Riemannian 
first Heisenberg group~$\HH$, an entire locally Lipschitz  intrinsic graph  admitting vanishing first variation of its sub-Riemannian area and non-negative second variation must be an intrinsic plane, i.e., a coset of a two dimensional subgroup of~$\HH$. Moreover  two examples are given for stressing result's sharpness.
\end{abstract}
\maketitle

\setcounter{tocdepth}{2}
\phantomsection
\addcontentsline{toc}{section}{Contents}
\tableofcontents

%%%%%%%%%%%%%%%%%%%%%%%%%%%%%%%%%%%%%%%%%%%%%%%%%%%%%%%%%%%%%%%
\section{Introduction}

%\newpage
%\subsection{Overview}
%
%%Carnot groups are homogeneous metric measure spaces
%Sub-Riemannian Carnot groups are Lie groups endowed with a left-invariant length distance that has a one-parameter group of dilations.
%
%Geometric Measure Theory on %sub-Riemannian
% Carnot groups still lacks of a regularity theory for perimeter minimizers.
%Sets with finite Riemannian perimeter have also finite sub-Riemannian perimeter.
%%On the contrary, 
%But there are sets of finite sub-Riemannian perimeter that have Riemannian Hausdorff measure strictly larger than their topological dimension, see~\cite{SC-K}.
%

% Pansu:
% P.Pansu, Geometrie du group d’Heisenberg. These pour le titre de Docteur 3`eme cycle, Universite Paris VII, (1982). 
% per area dei t-grafici
% 
% Lavoro di Serra Cassano SOME TOPICS OF GEOMETRIC MEASURE THEORY IN CARNOT GROUPS : problemi aperti
% 
% Chang Yang Wang: MR2262784
% Cheng Wang Yang 2007 Mathematician annalen
% 
% MR2481053 : dove citiamo i lavori di Giovanna, questo fa regolarità C1. 
% 
% Pauls e Garofalo (regolarità per t-grafici)
% https://arxiv.org/abs/math/0209065
% Pauls: MR2043961
% Pauls: MR2225631

Geometric Measure Theory on sub-Riemannian Carnot groups 
is a thriving research area where, despite many deep results,
fundamental questions still remain open
% Tesidipansu e MR979599
% e quello già citato
% Capogna danielli garofalo 1994, MR1312686
% MR1871966
% MR1404326
% libro di capogna
% note di francesco
% perimetri su spazi metrici ambrosio: MR1823840
% corso di ambrosio con ghezzi a parigi
% 
\cite{TesiPansu,MR979599,MR676380,MR1312686,MR1404326,MR1823840,MR1871966,MR2312336,MR3642646,MR3587666}.
%there are still fundamental open questions.
In this paper we deal with the Bernstein problem 
in the sub-Riemannian first Heisenberg group $\HH$~\cite{MR2333095,MR2455341,MR2584184,MR2472175,MR2648078,MR2609016,MR3406514}. % 
We characterize minimal entire intrinsic graphs of Lipschitz functions.
We also discuss examples with Sobolev and $C^1$-intrinsic regularity.

The Lie algebra of the Heisenberg group is spanned by three vector fields $X$, $Y$ and $Z$, 
whose only non-trivial bracket relation is $[X,Y]=Z$.
The vector fields $X$ and $Y$ are called \emph{horizontal} and they have a special role in the geometry and analysis on $\HH$.

Suitable notions of sub-Riemannian perimeter and area have been introduced on $\HH$, see \cite{TesiPansu,MR1312686,MR1404326,MR1871966} and Section~\ref{sec06152116} below for details. % MR3363670 sulla regolarità
%A theory of sub-Riemannian perimeter has been developed following the classical theory on $\R^n$.
In the theory of perimeter that has been developed,
 regular surfaces in $\HH$ play the same role as $C^1$-hyersurfaces in $\R^n$.
 A \emph{regular surface} in $\HH$ is the level set of a function $F:\HH\to\R$ with distributional derivatives $XF$ and $YF$ that are continuous and not vanishing simultaneously.
As an example of the difficulties encountered in the sub-Riemannian setting,
 we remark that
 there are regular surfaces in $\HH$ with Euclidean Hausdorff dimension strictly exceeding the topological dimension~\cite{MR2124590}.

The Bernstein problem asks to characterize area-minimizing hypersurfaces that are the graph of a function.
Two types of graphs in $\HH$ have been studied so far: \emph{$T$-graphs} and \emph{intrinsic graphs}.
The former are graphs 
%of functions $f:\R^2\to\R$ 
along the vector field $Z$ (also called $T$ in the literature): 
if $f:\R^2\to\R$, then $\Gamma_f^T=\{(x,y,f(x,y)):(x,y)\in\R^2\}$ is the $T$-graph of $f$ in coordinates.
The latter are graphs along 
%a horizontal vector field, i.e., 
a linear combination of $X$ and $Y$, which can be chosen to be $X$ up to isomorphism:
if $f:\R^2\to\R$, then $\Gamma_f=\{(0,y,t)*(f(y,t),0,0):(y,t)\in\R^2\}$ is the $X$-graph of $f$ in exponential coordinates, where $*$ denotes the group operation of $\HH$.

We say that a function, or its graph, is \emph{stationary} if the first variation of the area functional vanishes.
We call them \emph{stable} if they are stationary and the second variation of the area functional is non-negative.
See Section~\ref{sec06152116} for details in the case of intrinsic graphs.

The area functional for $T$-graphs is convex \cite{TesiPansu,MR1312686,MR2043961,MR2225631,MR2262784,MR2481053,MR3276118}.
Hence, stationary $T$-graphs are local minima.
%\cite{MR1312686,MR676380}.
Moreover, any function whose $T$-graph has finite sub-Riemannian area has (Euclidean) bounded variation \cite{MR3276118}.

%If $f\in C^2(\R^2)$, 
The Bernstein problem for $T$ graphs of functions in $C^2(\R^2)$
 has been intensively studied
 \cite{2002math......9065G,MR2165405,MR2584184,MR2609016}. %[GP, Garofalo,Pauls, 2003], 
% \cite{} % [CHMY (Cheng, Hwang, Malchiodi,Yang, 2005)], 
% [CW(Cheng, Hwang,2005 ??]. %[RR (Ritore,Rosales, 2008)].
Under this regularity assumption, a complete characterization has been given \cite{MR2584184}:
$\Gamma_f^T$ is area-minimizing if and only if there are $a$, $b$ and $c$ real such that
\begin{itemize}
\item
$f(x,y) = ax+by+c$, or
\item
$f(x,y)= xy +ax+b$ %, for some $a$, $b$ and $c$ real 
	 (up to a rotation around the $Z$-axis).
\end{itemize}
Beyond $C^2$-regularity, there are plenty of examples of minimal graphs that are not $C^2$ \cite{MR2448649}.
We also recall that there are examples of \emph{discontinuous} functions defined on a half plane whose $T$-subgraph is perimeter minimizing, see \cite[\S3.4]{MR3276118}.

The regular (but Euclidean fractal) surface constructed in \cite{MR2124590} is not a $T$-graph, but it is an intrinsic graph.
In fact, all regular surfaces are locally intrinsic graphs \cite{MR1871966}.
When the intrinsic graph of a function $f:\R^2\to\R$ is a regular surface, we\footnotemark{} write $f\in\Cow(\R^2)$ and we say that $f$ is $C^1$-intrinsic, or of class $\Cow$.
\footnotetext{In the literature, one usually writes $f:\bb W\to\bb V$, where $\bb W=\{(0,y,t):(y,t)\in\R^2\}$ and $\bb V=\{(x,0,0):x\in\R\}$). This explains the use of the letter $\bb W$.}
% the letter $\mathbb{W}$ is used  to denote the domain of $f$

An important class of intrinsic graphs are \emph{intrinsic planes}, i.e., cosets of two-dimensional Lie subgroups of $\HH$.
Their Lie algebra contains $Z$ and for this reason they are sometimes called vertical planes.
The tangents (as blow-ups at one point) of regular surfaces are intrinsic planes \cite{MR1871966}.
Intrinsic planes are area minimizers \cite{MR2333095}.

The Bernstein problem for intrinsic graphs has been also intensively studied
\cite{MR2333095,MR2455341,MR2584184,MR2472175,MR2648078,MR3406514}.
%[BASCV (Barone Adesi, S.C, Vittone, 2007)], [DGN (Danielli, Garofalo, Nhieu, 2008)] ,[MSCV (Monti, S.C., Vittone, 2008], [DGNP (Danielli, Garofalo, Nhieu, Pauls, 2009)],.......,[GR (Galli,Ritore, 2015)].
In this case, the area functional is not convex and there are stationary graphs that are not area minimizers \cite{MR2472175}.
So, any characterization of area minimizers uses both first and second variations of the area functional.
%A graph is stationary if the first variation 

%In this setting, 
The scheme of a Bernstein conjecture for intrinsic graphs is: 
 ``If $f\in\cal X$ and $\Gamma_f$ is area minimizer, then $\Gamma_f$ is an intrinsic plane'', where $\mathcal X$ is a class of functions $\R^2\to \R$.
If $\mathcal X=C^0(\R^2)\cap W^{1,1}_{loc}(\R^2)$, the conjecture is false \cite{MR2455341}.
To our knowledge, the most general positive result is for $\mathcal X=C^1(\R^2)$ in \cite{MR3406514}.
% states that if $f\in C^1(\R^2)$ and $\Gamma_f$ is area minimizer, then $\Gamma_f$ is an intrinsic plane 
%However, there are examples of functions $f\in C^0(\R^2)$ such that $\Gamma_f$ is area minimizer but it is not
We improve this result by showing that the conjecture is true for $\mathcal X=\Lip_{loc}(\R^2)$.

%We say that $f$ is \emph{stable} if the first variation of the area functional at $f$ is zero and the second variation is non-negative, see Section~\ref{sec06152116}.
%Clearly, if $\Gamma_f$ is area minimizing, then $f$ is stable.

\begin{theorem}\label{thm06110146}
	If $f\in\Lip_{loc}(\R^2)$ is stable, %a local area minimizer, 
	then  $\Gamma_f$ is an intrinsic plane.
\end{theorem}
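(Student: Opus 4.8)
The plan is to reduce the global statement to a Euclidean PDE statement about $f$, and then to exploit the stability inequality to rule out all non-affine solutions. First I would recall the explicit form of the intrinsic area functional for an $X$-graph: if $f=f(y,t)$ is locally Lipschitz, its intrinsic gradient is the (distributional, and here a.e.-defined) \emph{Burgers field} $\nabla^f f = \partial_y f + f\,\partial_t f$, and the sub-Riemannian area of $\Gamma_f$ over a domain $\Omega$ is $\mathcal A(f;\Omega)=\int_\Omega \sqrt{1+(\nabla^f f)^2}\,\dd y\,\dd t$. Writing $\phi=\nabla^f f$ for brevity, the first variation in direction $\psi\in C^\infty_c(\Omega)$ is, after the usual computation, $\int_\Omega \frac{\phi}{\sqrt{1+\phi^2}}\,\nabla^f\psi\,\dd y\,\dd t$ (one must be careful that the perturbation $f+s\psi$ changes the vector field $\nabla^{f+s\psi}$, which is exactly where the non-convexity and the Burgers structure enter). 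Stationarity then says that the ``intrinsic curvature'' $\mathcal H_f := \nabla^f\!\big(\phi/\sqrt{1+\phi^2}\big)$ vanishes weakly on all of $\R^2$, and the second-variation functional, again after a direct but somewhat delicate computation, takes the form
\begin{equation*}
\mathcal B(f;\psi)=\int_{\R^2} \frac{(\nabla^f\psi)^2}{(1+\phi^2)^{3/2}}\,\dd y\,\dd t - \int_{\R^2} \frac{\phi}{\sqrt{1+\phi^2}}\,(\partial_t\psi)^2\,\dd y\,\dd t \ \ge 0 \qquad \forall\,\psi\in C^\infty_c(\R^2),
\end{equation*}
the second term being the genuinely new, sign-indefinite contribution coming from the $f$-dependence of the vector field.

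Next I would try to run a calibration/foliation argument adapted to the Lipschitz setting. The idea, in the spirit of the $C^1$ case in \cite{MR3406514} and of Giusti-type arguments, is: if $f$ is stable, produce from the stability inequality a positive (super)solution of the linearized equation — morally a ``Jacobi field'' — and use it to calibrate, forcing $\phi$ to be constant along the characteristics of the Burgers field and ultimately forcing $\mathcal A$ to be realized by an intrinsic plane. Concretely, stationarity plus the Burgers structure should let me show that $\phi$ is constant along characteristic lines $t\mapsto (y, t_0 + \phi\, y)$, so that $f$ has the structure of a (weak, entire) solution of $\partial_y f + f\partial_t f = \phi(y,t)$ with $\phi$ an intrinsic-minimal-surface field; then stability has to upgrade this to $\phi\equiv$ const, i.e. $f$ affine, i.e. $\Gamma_f$ an intrinsic plane. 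A clean way to organize the last step is to test $\mathcal B$ with $\psi=\eta\,w$ where $w$ is a candidate Jacobi field (e.g. derived from translations/dilations of the one-parameter family of graphs) and $\eta$ a logarithmic cutoff à la Moser, and show the cutoff error vanishes as the cutoff exhausts $\R^2$ precisely because $f$ is \emph{entire}; this kills the only term that could be negative and pins down $\phi$.

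The main obstacle — and the place where the hypothesis $f\in\Lip_{loc}$ rather than $C^1$ really has to be paid for — is that $\phi=\partial_y f + f\partial_t f$ is only an $L^\infty_{loc}$ function, so the characteristics of the Burgers field, the Jacobi field $w$, and the integrations by parts needed to manipulate $\mathcal B$ are not classically justified; one is really working with a (possibly genuinely nonsmooth) bounded weak solution of an inviscid-Burgers-type balance law, where shocks are a priori allowed. I expect the heart of the proof to be a regularity/structure result showing that a stable entire Lipschitz intrinsic graph cannot develop such singularities — either via an a priori estimate (e.g. a one-sided Oleinik-type bound on $\phi$ that stability forces to be two-sided, hence $\phi\in BV$ then better), or via an approximation scheme (mollify $f$, or view $f$ as a limit of $C^1$-intrinsic graphs, apply the known $C^1$ result, and pass to the limit while keeping stability). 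Once $\phi$ is known to be continuous (indeed locally constant along characteristics with no shocks), the calibration argument closes and entireness does the rest: any bounded entire field of this type that also satisfies the stability inequality must be constant, so $f(y,t)=ay+bt+c$ and $\Gamma_f$ is a coset of a two-dimensional subgroup, i.e. an intrinsic plane. I would also keep in mind the two sharpness examples promised in the abstract (a Sobolev stationary-but-unstable graph, and a $\mathscr C^1_{\mathbb W}$ example), as they indicate exactly which estimates are \emph{not} available below $\Lip_{loc}$ and therefore which steps above must use the Lipschitz bound in an essential, non-removable way.
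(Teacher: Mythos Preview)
Your proposal identifies the right ingredients (first and second variation, characteristics of the Burgers field, the regularity bottleneck at Lipschitz), but it does not amount to a proof and diverges substantially from the paper's argument.

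First, your variational formulas are not quite right: the formal adjoint of $\nabla^f$ is $(\nabla^f)^*\psi = -\nabla^f\psi - (\partial_t f)\psi$, so the first variation is $-\int \frac{\phi}{\sqrt{1+\phi^2}}\bigl(\nabla^f\psi + (\partial_t f)\psi\bigr)$, and the second variation carries $\bigl(\nabla^f\psi + (\partial_t f)\psi\bigr)^2$ in the first term and $\partial_t(\psi^2)$ (not $(\partial_t\psi)^2$) in the second. These lower-order pieces are exactly what make the Jacobi-field-plus-cutoff manipulation you sketch hard to execute: the linearized operator is not self-adjoint, and the test function $\psi = \eta w$ does not separate cleanly.

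Second, and more importantly, the paper does \emph{not} run a calibration or a Moser-type cutoff argument on $\R^2$. Its key device, which your plan is missing entirely, is a \emph{Lagrangian change of variables}: one first proves that for $f\in\Lip_{loc}$ the flow of $\nabla^f$ defines a locally biLipschitz homeomorphism $\Psi(s,\tau)=(s,\chi(s,\tau))$ of $\R^2$ (this is where Lipschitz regularity enters essentially, via uniqueness of characteristics). After the change of variables, $\nabla^f$ becomes $\partial_s$, stationarity becomes ``$\partial_s^2\chi$ is independent of $s$'', and one obtains the explicit form $\chi(s,\tau) = a(\tau)s^2/2 + b(\tau)s + \tau$ with $a,b\in\Lip_{loc}(\R)$. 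The second-variation inequality then reduces, for a.e.~fixed $\tau$, to a one-dimensional inequality $\int_\R \phi'(s)^2 h(s)\,\dd s \ge (2A-B^2)\int_\R \phi(s)^2 h(s)^{-1}\,\dd s$ with $h(s)=As^2/2+Bs+1$, $A=a'(\tau)$, $B=b'(\tau)$; a short computation from~\cite{MR2333095} forces $2A=B^2$, and a separate argument (using that $\partial_t f$ must remain locally bounded along characteristics) rules out $2A=B^2>0$. Hence $a'=b'=0$ a.e.\ and $\Gamma_f$ is an intrinsic plane.

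Your sketch never produces a concrete Jacobi field, never explains how the cutoff error is controlled given the non-self-adjoint lower-order terms, and leaves the regularity upgrade for $\phi$ as a hope (``Oleinik-type bound'' or ``approximation''). The Lagrangian-coordinates step is precisely the missing idea that turns all of these into clean computations; without it, none of the steps you outline can be closed at Lipschitz regularity.
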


Our proof follows the strategy of~\cite{MR2333095}:
 We will make a change of variables in the formulas for the first and second variation using so-called Lagrangian coordinates.
With this in mind, we have to show that Lagrangian coordinates exist in the first place, see Theorem~\ref{existLpLip}, and then 
%However, we need to 
take care of all regularity issues involved in the change of variables.
%In fact, 

As an intermediate step in the proof of Theorem~\ref{thm06110146}, we obtain a regularity result for stationary intrinsic graphs with Lipschitz regularity~\cite{MR2481053,MR2583494,MR2774306}.
%Notice that $\Lip_{loc}(\R^2)\not\subset\Cow(\R^2)$.
We denote by $\nabla^f$ the vector field $\de_y+f(y,t)\de_t$ on $\R^2$, see Section~\ref{sec06152116}.

\begin{theorem}\label{thm09012208}
	Let $\omega\subset\R^2$ open.
	If $f\in\Lip_{loc}(\omega)$ is stationary, then $\nabla^ff\in\Lip_{loc}(\omega)$ and $\nabla^ff$ is constant along the integral curves of $\nabla^f$.
%	 in the Lagrangian sense.
	In particular, $f\in\Cow(\omega)$.
\end{theorem}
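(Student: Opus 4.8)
The plan is to derive the Euler--Lagrange equation satisfied by a stationary $f$, transport it to Lagrangian coordinates where it degenerates to a trivial transport equation, and then exploit the coupling between $f$ and $\nabla^f f$ to bootstrap regularity. First I would set up the \emph{weak minimal surface equation}. Since $f\in\Lip_{loc}(\omega)$, the function $w:=\nabla^f f=\de_y f+f\,\de_t f$ lies in $\LI_{loc}(\omega)$ and $V:=w/\sqrt{1+w^2}\in\LI_{loc}(\omega)$ with $|V|<1$ a.e. Differentiating $\nabla^{f+s\vf}(f+s\vf)$ at $s=0$ produces $\de_y\vf+\de_t(f\vf)$, so unfolding the definition of stationarity (Section~\ref{sec06152116}) the first variation of the sub-Riemannian area $\int\sqrt{1+(\nabla^f f)^2}$ along $f+s\vf$, $\vf\in C_c^\infty(\omega)$, vanishes exactly when
\[
\int_\omega V\,\bigl(\de_y\vf+\de_t(f\vf)\bigr)\dd y\dd t=0\qquad\text{for all }\vf\in C^\infty_c(\omega),
\]
i.e.\ precisely when $\nabla^f V=0$ in $\mathcal D'(\omega)$, a linear transport equation whose drift $(1,f)$ is locally Lipschitz.

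Next I would introduce \emph{Lagrangian coordinates} locally. Fix $p_0=(y_0,t_0)\in\omega$; since $(1,f)$ is locally Lipschitz, through each point passes a unique characteristic of $\dot y=1$, $\dot t=f(y,t)$. Let $\Phi(y,s)$ be the $t$-coordinate at parameter $y$ of the characteristic through $(y_0,s)$, so $\Phi(y_0,s)=s$ and $\de_y\Phi=f(\,\cdot\,,\Phi)$. On a small rectangle $R=(y_0-\delta,y_0+\delta)\times(s_0-\rho,s_0+\rho)$ the map $\Psi(y,s):=(y,\Phi(y,s))$ is a bi-Lipschitz homeomorphism onto a neighbourhood $U$ of $p_0$ (Gr\"onwall estimates on the flow and its inverse), and it conjugates $\nabla^f$ to $\de_y$. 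Pulling the identity of the first step back through the bi-Lipschitz $\Psi$, the bounded function $\tilde V:=V\circ\Psi$ satisfies $\de_y\tilde V=0$ in $\mathcal D'(R)$, hence $\tilde V(y,s)=h(s)$ for some $h\in\LI$ with $|h|<1$ a.e.; inverting the diffeomorphism $g(w)=w/\sqrt{1+w^2}$ of $\R$ onto $(-1,1)$ gives $w\circ\Psi(y,s)=\tilde w(s):=g^{-1}(h(s))$, independent of $y$. This already yields that $\nabla^f f$ is a.e.\ constant along the integral curves of $\nabla^f$.

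Finally I would \emph{bootstrap to Lipschitz regularity}. The composition $F:=f\circ\Psi$ is locally Lipschitz on $R$, and since $\Psi$ is bi-Lipschitz the chain rule for Lipschitz maps holds a.e.\ and gives $\de_y F(y,s)=(\de_y f)(\Psi)+f(\Psi)\,(\de_t f)(\Psi)=(\nabla^f f)(\Psi(y,s))=\tilde w(s)$ for a.e.\ $(y,s)$. By Fubini, for a.e.\ $s$ the slice $F(\,\cdot\,,s)$ is absolutely continuous, so integrating in $y$ and using $F(y_0,s)=f(y_0,s)$,
\[
F(y,s)=f(y_0,s)+\tilde w(s)\,(y-y_0)\qquad\text{for a.e.\ }s,\ \text{every }y.
\]
Fixing one $y\ne y_0$ and solving, $\tilde w$ equals a.e.\ the difference quotient $\bigl(F(y,\cdot)-f(y_0,\cdot)\bigr)/(y-y_0)$ of two continuous functions, hence has a continuous representative, which is moreover locally Lipschitz because $F(y,\cdot)$ and $f(y_0,\cdot)$ are Lipschitz. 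Thus $\nabla^f f=\tilde w\circ\mathrm{pr}_s\circ\Psi^{-1}$ has a locally Lipschitz representative on $U$, constant (everywhere, by continuity) along integral curves of $\nabla^f$; covering $\omega$ by such $U$'s gives $\nabla^f f\in\Lip_{loc}(\omega)$ with the stated invariance, and since $\nabla^f f$ is continuous, $f\in\Cow(\omega)$.

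The hard part will be the passage from the distributional to the Lagrangian picture: one must check that the equation $\nabla^f V=0$ is genuinely preserved by the bi-Lipschitz change of variables $\Psi$ and that the conjugated drift is exactly $\de_y$, so that a bounded distributional solution really is constant along characteristics — this is precisely where one pays for $f$ being merely Lipschitz rather than $C^1$. A second delicate point is the chain-rule/Fubini step, namely that $\tfrac{d}{dy}f(\gamma(y))=(\nabla^f f)(\gamma(y))$ holds for a.e.\ $y$ along a.e.\ characteristic $\gamma$, which needs that a.e.\ characteristic meets the (Lebesgue-null) non-differentiability set of $f$ in a null set — a Fubini argument carried out in the chart $\Psi$ — together with the routine patching of the local conclusions into a global statement on $\omega$.
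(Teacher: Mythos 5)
Your outline follows essentially the same route as the paper (weak Euler--Lagrange equation, Lagrangian coordinates built from the Lipschitz flow of $(1,f)$, constancy of $V=\nabla^ff/\sqrt{1+(\nabla^ff)^2}$ along characteristics, then a bootstrap reading $\nabla^ff\circ\Psi$ as a difference quotient of Lipschitz slices -- the latter is in substance the paper's proof that $a(\tau)=\grad^ff(\hat s,c(\tau))$ is locally Lipschitz). However, the one step you state as if it were automatic, namely that ``pulling the identity back through the bi-Lipschitz $\Psi$'' gives $\de_s\tilde V=0$ in $\mathcal D'$, is precisely the technical core of the theorem, and you give no argument for it; you flag it at the end as ``the hard part'' but leave it unresolved. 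Concretely: performing the change of variables $(y,t)=\Psi(s,\tau)$ in $\int_\omega V\,(\de_y\varphi+\de_t(f\varphi))\dd\LL^2=0$ via the area formula introduces the Jacobian weight $\de_\tau\chi$, and what you actually obtain is
\begin{equation*}
\int_{\tilde\omega} \tilde V\left(\de_s\tilde\varphi\,\de_\tau\chi+\de_\tau\tilde f\,\tilde\varphi\right)\dd s\dd\tau
=\int_{\tilde\omega} \tilde V\,\de_s\!\left(\tilde\varphi\,\de_\tau\chi\right)\dd s\dd\tau=0 ,
\end{equation*}
where the second equality already uses $\de_s\de_\tau\chi=\de_\tau\tilde f$ (valid because $f$ is Lipschitz). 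So the pulled-back equation asserts $\de_s\tilde V=0$ only against test functions of the special form $\tilde\varphi\,\de_\tau\chi$ with $\varphi\in\Lip_c(\omega)$. To deduce it for an arbitrary $\theta\in\Lip_c(\tilde\omega)$ you would substitute $\tilde\varphi=\theta/\de_\tau\chi$, but $\de_\tau\chi$ is merely a measurable function, locally bounded above and away from zero, and not Lipschitz in general; hence $\theta/\de_\tau\chi$ is not an admissible test function and the substitution is not legitimate as stated.

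This is exactly where the paper does the work (Lemma~\ref{lem05122135}): it mollifies $\chi$, uses the admissible test functions $\tilde\varphi_\epsilon=\theta/\de_\tau\chi_\epsilon$ (admissible because $\Psi$ is locally biLipschitz), and passes to the limit by dominated convergence, relying crucially on the uniform bound and a.e.\ convergence of $\de_s\de_\tau\chi_\epsilon=(\de_\tau\tilde f)*\rho_\epsilon$ together with the local positive lower bound on $\de_\tau\chi$. Some argument of this type (or a renormalization-type argument for bounded distributional solutions of transport equations with Lipschitz drift) must be supplied; it is true for Lipschitz $f$, but it is not free, and it is the very point where one pays for $f$ being only Lipschitz rather than $C^1$. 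The remaining ingredients of your plan -- existence and biLipschitz regularity of the flow chart (Theorem~\ref{existLpLip}), the a.e.\ chain rule and Fubini slicing, the identification $\de_sF=(\grad^ff)\circ\Psi$, and the patching of local conclusions -- are correct and coincide with the paper's argument.
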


This theorem is an extension of \cite[Theorem 1.3]{MR2583494} because we are not assuming $f$ to be a viscosity solution of the minimal surface equation, but just a distributional solution.
The example that proves Theorem~\ref{thm09022205} below will also show that 
there are distributional solutions that are not viscosity solutions in the sense of \cite[Definition 1.1]{MR2583494}, see Remark~\ref{rem09042024}.
% is shown by the example stated in the next theorem, if we restrict it to any open set $\Omega\subset\R^2$ not containing $0$ but containing $(1,0)$.\footnote{Da riscrivere questa parte}

Once the proof for the Lipschitz case is understood, 
we investigate the sharpness of Theorem \ref{thm06110146} with respect to the Lipschitz regularity of $f$ in two examples.
The first example is locally Lipschitz on $\R^2$ except for one point, it is stable but $\Gamma_f$ is not an intrinsic plane.
See Figure~\ref{fig09030019} at page~\pageref{fig09030019} for a picture of $\Gamma_f$.
%In fact, we give an example of a function $f$ which is stable, continuous, Lipschitz continuous in $\R^2\setminus\{0\}$ and with Sobolev regularity on the whole $\R^2$ but $\Gamma_f$ is not an intrinsic plane.

\begin{theorem}\label{thm09022205}
	There is $f\in Lip_{\rm loc}(\R^2\setminus\{0\})\cap W^{1,p}_{\rm loc}(\R^2)$ with $1\le\,p<\,3$ that is stable, but $\Gamma_f$ is not an intrinsic plane.
\end{theorem}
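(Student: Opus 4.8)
The plan is to exhibit an explicit candidate and verify the four required properties: (i) $f\in\Lip_{\rm loc}(\R^2\setminus\{0\})$, (ii) $f\in W^{1,p}_{\rm loc}(\R^2)$ for all $1\le p<3$ (and, crucially, not for $p=3$), (iii) $f$ is stationary, and (iv) $f$ is stable but $\Gamma_f$ is not an intrinsic plane. The natural source of the example is the structure revealed by Theorem~\ref{thm09012208}: stationary solutions have $\nabla^f f$ constant along integral curves of $\nabla^f$, so one builds $f$ by prescribing the Lagrangian/characteristic picture. Concretely, I would look for a function that is homogeneous of degree one with respect to the intrinsic dilations (so that $\Gamma_f$ is a dilation-invariant, cone-like surface singular only at the origin), since such self-similar solutions are the canonical candidates for ``minimal cones'' in the Bernstein circle of ideas. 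Writing $f$ in polar-type coordinates adapted to $\nabla^f$, the characteristic ODE becomes explicitly solvable; one chooses the one-parameter family of characteristics and the (constant) values of $\nabla^f f$ along them so that the curves foliate $\R^2\setminus\{0\}$ and fit together into a function that is Lipschitz away from $0$ but whose gradient blows up like $|(y,t)|^{-1}$ near the origin — this is exactly the blow-up rate that lies in $W^{1,p}_{\rm loc}$ precisely for $p<3$ in the plane (since $\int_{B_1}|(y,t)|^{-p}\,dy\,dt<\infty \iff p<3$ after accounting for the anisotropic scaling).

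Once the candidate is fixed, step (iii) is checked by plugging into the weak (distributional) minimal surface equation for intrinsic graphs from Section~\ref{sec06152116}: away from the origin this is a direct computation using that $f$ solves the characteristic system, and across the origin one must check that the distributional identity is not spoiled by the singularity — here the key point is that the singular set is a single point and $\nabla f\in L^1_{\rm loc}$ (indeed $L^p_{\rm loc}$ for $p<3>1$), so that no distributional mass concentrates at $0$; a cutoff argument around the origin, letting the cutoff radius tend to $0$ and using the $W^{1,1}_{\rm loc}$ bound to kill the boundary term, closes this. Note this is also where Remark~\ref{rem09042024} gets its content: the candidate will fail to be a viscosity solution at $0$ in the sense of \cite[Definition 1.1]{MR2583494}, so it genuinely lies outside the scope of Theorem~\ref{thm09012208}'s hypotheses globally, which is why a non-planar example can exist at all.

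For step (iv), stability, I would use the second-variation formula in Lagrangian coordinates from~\cite{MR2333095} and Section~\ref{sec06152116}: after the change of variables the second variation becomes a one-dimensional (fiberwise) quadratic form along the characteristics, and one shows its non-negativity by the same integration-by-parts / Hardy-type inequality that handles the $C^1$ case, checking that the singularity at the origin does not destroy the integrations by parts (again because test functions are compactly supported and the relevant weights are integrable for $p<3$). That $\Gamma_f$ is not an intrinsic plane is immediate from the explicit form — an intrinsic plane corresponds to $f$ affine, whereas our $f$ is genuinely $1$-homogeneous and nonlinear, with a real singularity at $0$.

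The main obstacle I expect is twofold and both parts sit at the origin. First, engineering the explicit $f$ so that the characteristic curves of $\nabla^f$ actually foliate the punctured plane and glue into a single-valued, locally Lipschitz function with exactly the $|\cdot|^{-1}$ gradient blow-up — this requires a careful choice of the characteristic family and may force the use of non-obvious (e.g. logarithmic or piecewise) profiles rather than a pure power. Second, and more delicate, is justifying that the singular point contributes nothing to either the first-variation (stationarity) identity or the second-variation integration by parts: the borderline integrability ($W^{1,p}$ only for $p<3$) means the cutoff/capacity estimates near $0$ must be done with some care, tracking that the boundary terms on $\partial B_\rho$ are $o(1)$ as $\rho\to 0$; this is the technical heart of the argument and where the precise exponent $3$ enters.
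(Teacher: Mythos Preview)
Your instincts about dilation-homogeneity and a piecewise profile are correct; the paper's example is precisely
\[
f(y,t)=\begin{cases}0 & t\le 0,\\ 2t/y & 0<t\le y^2/2,\\ y & t>y^2/2,\end{cases}
\]
whose graph is a cone under $\delta_\lambda$. However, two parts of your plan would not go through as written. First, you propose using the Lagrangian coordinates of Section~\ref{sec09022311} directly on $f$, but for this $f$ the parametrization degenerates: in the middle region the characteristics are the parabolas $t=cy^2$, all of which pass through the origin, so the map $(s,\tau)\mapsto(s,\chi(s,\tau))$ with $\chi(0,\tau)=\tau$ is \emph{not} a homeomorphism onto $\R^2$ (with the corresponding step function $a$ it misses the wedge $0<t<y^2/2$ entirely). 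The paper's remedy --- and the key idea you are missing --- is \emph{approximation}: one replaces the step by the Lipschitz $a_\epsilon(\tau)=\min(1,\max(0,\tau/\epsilon))$, obtains graphical strips $f_\epsilon\in\Lip_{\rm loc}(\R^2)$ as in Proposition~\ref{prop06061521}, proves $f_\epsilon\to f$ in $W^{1,2}_{\rm loc}$, and passes to the limit in $I_{f_\epsilon}$ and $II_{f_\epsilon}$ via Lemma~\ref{lem08261641}. No cutoff or capacity estimate at the origin is used.

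Second, your stability argument via a ``Hardy-type inequality that handles the $C^1$ case'' is misconceived: in Section~\ref{sec09022317} the second variation is used to show that stability \emph{forces} planarity, not to certify stability of a given example. The paper's mechanism for $II_f(\varphi)\ge 0$ is different and quite concrete: in Lagrangian coordinates (on the approximants), $II_{f_\epsilon}(\varphi)$ splits as a manifestly non-negative term minus a term supported on $\{0\le\tau\le\epsilon\}$ which, after the substitution $s=\sqrt{2\epsilon}\,v$, $\tau=\epsilon w$, is $O(\sqrt{\epsilon})$ uniformly. Thus the negative contribution simply vanishes in the limit; there is no inequality to prove on the limiting object. (Incidentally, the threshold $p<3$ arises because $|\de_t f|=2/|y|$ on the parabolic region $\{0<t<y^2/2\}$, yielding $\int |y|^{-p}\cdot y^2\,dy$; the anisotropy enters through the shape of this region rather than through a homogeneous-dimension heuristic on $|(y,t)|^{-1}$.)
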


The second example fails to be Lipschitz on a Cantor set, but it is $C^1$-intrinsic.
See Figure~\ref{fig09030022} at page~\pageref{fig09030022} for a picture of $\Gamma_f$.

\begin{theorem}\label{thm06102321}
	There is $f\in W^{1,2}_{loc}(\R^2)\cap\Cow(\R^2)\cap\Lip_{loc}(\R^2\setminus(\{0\}\times C))$, where $C\subset[0,1]$ is the Cantor set,
	that is stable,
%	 such that $I_f(\psi)=0$ and $II_f(\psi)\ge0$ for all $\psi\in\Co^\infty_c(\R^2)$, 
	but $\Gamma_f$ is not an intrinsic plane.
\end{theorem}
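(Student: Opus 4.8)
The plan is to build the function $f$ explicitly from its characteristic foliation and then verify the required properties one at a time. Recall from the analysis behind Theorem~\ref{thm09012208} that a locally Lipschitz stationary function is reconstructed from a foliation of the plane by the integral curves $y\mapsto t(y)$ of $\nabla^f$, which solve $t''=(\nabla^f f)(y,t(y))$ with $\nabla^f f$ constant along each such curve; conversely, any foliation of $\R^2$ by curves of this type that depends regularly enough on its initial datum on the $t$-axis produces a stationary $f$. So the heart of the construction is to design the foliation. I would take it to be a self-similar family of ``fans'', organized along the self-similar structure of $C$: near each point of $\{0\}\times C$ one places a suitably scaled and translated copy of the configuration of the one-point example of Theorem~\ref{thm09022205}, the scale being dictated by the Cantor stage that produces that point, while the regions lying over the complementary intervals of $C$ are filled with pieces of intrinsic planes. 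One then checks that the pieces match both value and intrinsic slope $\nabla^f f$ along their common boundaries and together foliate all of $\R^2$; the resulting $f$ is stationary because $\nabla^f f$ is constant along every leaf by construction, and it is not an intrinsic plane because a non-degenerate fan forces $f$ to depend on $t$, whereas the intrinsic planes that are $X$-graphs are exactly the ones with $f$ affine in $y$ and independent of $t$.

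Next one checks regularity and integrability. For $\Cow$ regularity the issue is continuity of $\nabla^f f$: across every interface it is glued continuously, so $\nabla^f f$ is continuous away from the accumulation set $\{0\}\times C$, and one arranges the scaling so that the oscillation of $\nabla^f f$ over a neighbourhood of a point of $C$ tends to $0$ with the scale, which yields continuity there as well; thus $f\in\Cow(\R^2)$. This is exactly the feature gained by replacing the single, possibly not $C^1$-intrinsic, singularity of the first example by a self-similar family of milder ones. For local Lipschitz regularity off $\{0\}\times C$, the Jacobian of the Lagrangian (leaf-parametrising) map degenerates only on the caustic of the family of fans, which the choice of scales places precisely on $\{0\}\times C$; elsewhere the inverse map, hence $f$, is locally Lipschitz. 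For $f\in W^{1,2}_{loc}(\R^2)$, a single fan of scale $r$ contributes to $\int|\nabla f|^2$ an amount comparable to a fixed power of $r$, and summing that power over the roughly $2^{k}$ fans of scale roughly $3^{-k}$ yields a convergent geometric-type series — here one uses that the critical Sobolev exponent of the model singularity is the number $3$ appearing in Theorem~\ref{thm09022205}, comfortably above $2$.

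The delicate point is \emph{stability}. Following the strategy of \cite{MR2333095}, one passes to Lagrangian coordinates, in which the second variation of the area decouples into a family of one-dimensional quadratic forms, one along each leaf. Leaf by leaf the configuration coincides with the one already shown to be stable in the proof of Theorem~\ref{thm09022205}, so each of these forms is non-negative, and hence so is the second variation after returning to the original variables. The work — and, I expect, the main obstacle — is to make this rigorous at the present low regularity and, above all, across the countably many interfaces and the caustic accumulating on $\{0\}\times C$: one must justify the change of variables there, show that the interface and caustic contributions do not spoil non-negativity, and reduce a general admissible perturbation to one adapted to the decomposition. This limiting argument is where the summability built into the Cantor scaling, together with the $\Cow$ regularity obtained above, is essential.
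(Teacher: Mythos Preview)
Your construction, made precise, would likely coincide with the paper's, but the paper's formulation is cleaner and avoids any gluing: it takes the graphical strip of Proposition~\ref{prop06061521} with $a:\R\to[0,1]$ the Cantor staircase (extended by $0$ and $1$), so that $f(s,a(\tau)s^2/2+\tau)=a(\tau)s$. The plateaus of $a$ automatically give the intrinsic-plane pieces over the complementary intervals of $C$, and no separate placement of ``scaled fans at each Cantor point'' is needed. The regularity assertions then follow from Proposition~\ref{prop06061521} together with an explicit approximation $a_n\to a$ by the standard piecewise-linear staircases.

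Your stability argument, however, has a genuine gap that is not merely a matter of rigor. You claim that in Lagrangian coordinates the second variation decouples into one-dimensional quadratic forms, and that ``leaf by leaf the configuration coincides with the one already shown to be stable'' in the first example, so each such form is non-negative. This is wrong on two counts. First, for $f$ itself the Lagrangian map is not biLipschitz (since $a$ is not Lipschitz), so the formula of Proposition~\ref{prop06061521}(iv) is unavailable; one must work through Lipschitz approximations $f_n$. Second, and more fundamentally, for any Lipschitz graphical strip with $b\equiv 0$ the leaf-by-leaf one-dimensional form is \emph{not} non-negative wherever $a_n'(\tau)>0$: this is exactly Lemma~\ref{lem06070850}, the engine of the Bernstein theorem, which says the one-dimensional inequality forces $2A=B^2$, hence $a_n'=0$. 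So the approximations $f_n$ are \emph{not} stable (they cannot be, by Theorem~\ref{thm06110146}), and the first example's stability was not proven leaf by leaf either---look at Lemma~\ref{lem07121720}. The mechanism you are missing is this: one shows $f_n\to f$ in $W^{1,2}_{loc}$ by direct computation, whence $II_f(\varphi)=\lim_n II_{f_n}(\varphi)$ by Lemma~\ref{lem08261641}; each $II_{f_n}(\varphi)$ splits into a manifestly non-negative term and a negative term, and the substitution $v=s\sqrt{a_n'/2}$ shows the negative term is $O(\sqrt{(2/3)^n})\to 0$. Stability comes from the instability of the approximations vanishing in the limit, not from any leaf-wise positivity.
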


For both examples of Theorem~\ref{thm09022205} and~\ref{thm06102321}, we don't know whether their intrinsic graphs are area minimizing.

We conclude by recalling
% we recall 
 some open problems in geometric measure theory on $\HH$ and higher Heisenberg groups.
 First, the Bernstein conjecture with $\mathcal X=\Cow(\R^2)$ is still open.
 Second, a regularity theorem for perimeter minimizers %in $\HH$ 
 is still missing \cite{MR3363670,MR3388885,MR3682744,MR3397002}.
 Third, 
 if we don't assume that the intrinsic graph has locally finite Euclidean area, then the variational formulas we used are not valid anymore and known alternative variations haven't found useful applications yet \cite{MR3558526,MR3753176,MR3276118}.

\subsection*{Plan of the paper}
In Section~\ref{sec06152116} we present a few preliminaries notions and notations.
In Section~\ref{sec09022311}, we prove that Lagrangian parametrizations exist for locally Lipschitz functions.
Section~\ref{sec09022316} is devoted to the characterization of stationary locally Lipschitz intrinsic graphs and  the proof of Theorem~\ref{thm09012208} is presented.
Section~\ref{sec09022317} concerns the consequences of stability, and thus the proof of Theorem~\ref{thm06110146}.
In Section~\ref{sec09022318} we study a class of stationary surfaces, called graphical strips, with low regularity.
Sections~\ref{sec11241414} and~\ref{sec09011226} are devoted to the examples of Theorems~\ref{thm09022205} and~\ref{thm06102321}, respectively.

%%%%%%%%%%%%%%%%%%%%%%%%%%%%%%%%%%%%%%%%%%%%%%%%%%%%%%%%%%%%%%%
\subsection*{Acknowledgements}
This paper benefited from
%The authors thank M.~Ritoré for 
 fruitful discussions
 with M.~Ritoré: The authors want to thank him.

%%%%%%%%%%%%%%%%%%%%%%%%%%%%%%%%%%%%%%%%%%%%%%%%%%%%%%%%%%%%%%%
\section{Preliminaries and notation}\label{sec06152116}
The Heisenberg group $\HH$ is represented in this paper as $\R^3$ endowed with the group operation
\[
(x,y,z)(x',y',z') = \left(x+x',y+y',z+z'+\frac12(xy'-x'y) \right) .
\]
In this coordinates, an orthonormal frame of the horizontal distribution is
\[
X = \de_x - \frac{y}{2} \de_z,
\qquad
Y = \de_y + \frac{x}{2} \de_z .
\]
The \emph{sub-Riemannian perimeter} of a measurable set $G\subset\HH$ in an open set $\Omega\subset\HH$ is
\[
P_{sR}(G;\Omega) = \sup\left\{ \int_G (X\psi_1+Y\psi_2) \dd L^3 : \psi_1,\psi_2\in C^\infty_c(\Omega),\ \psi_1^2+\psi_2^2\le 1 \right\} ,
\]
where $X\psi_1+Y\psi_2$ is the divergence of the vector field $\psi_1X+\psi_2Y$.
A set $G$ is a \emph{perimeter minimizer} in $\Omega\subset\HH$ if for every $F\subset\HH$ measurable with $(G\setminus F)\cup (F\setminus G)\ssubset \Omega$, it holds $P_{sR}(G;\Omega) \le P_{sR}(F;\Omega)$.
A set $G$ is a \emph{locally perimeter minimizer} in $\Omega\subset\HH$ if every $p\in\Omega$ has a neighborhood $\Omega'\subset\Omega$ such that $G$ is perimeter minimizer in $\Omega'$.

Given a function $f:\omega\to\R$, $\omega\subset\R^2$, its intrinsic graph $\Gamma_f\subset\HH$ is the set of points
\[
(0,y,t)(f(y,t),0,0) = \left( f(y,t) , y , t-\frac12 y f(y,t) \right),
\]
for $(y,t)\in\omega$.
The \emph{intrinsic gradient} of $f$ is $\nabla^ff$, where $\nabla^f=\de_y+f\de_t$ is a vector field on $\omega$.
%\[
%\nabla^f=\de_y+f\de_t .
%\]
The function $\nabla^ff:\omega\to\R$ is well defined when $f\in W^{1,1}_{loc}(\omega)$.
We say that $f$ is \emph{$C^1$-intrinsic}, or $f\in\Cow(\omega)$, if $f\in C^0(\omega)$ and $\grad^ff\in C^0(\omega)$.
We say that $f$ is a \emph{weak Lagrangian solution of $\Delta^ff=0$} on $\omega$ if for every $p\in\omega$ there is at least one integral curve of $\grad^f$ passing through $p$ along which $\grad^ff$ is constant.
See~\cite{MR3753176} for further discussion about this definition.

The \emph{graph area functional} is defined, for every $E\subset \omega$ measurable, by
\[
\scr A_f(E) := \int_E \sqrt{1+(\nabla^ff)^2} \dd\cal L^2 .
\]
Such area functional descends from the perimeter measure of the graph, that is, $\scr A_f(E) = P_{sR}(G_f\cap (E\cdot\R))$, where $G_f:=\{(0,y,t)\cdot(\xi,0,0):\xi\le f(y,t)\}$ is the subgraph of $f$, and $E\cdot\R=\{(0,y,t)\cdot(\xi,0,0):\xi\in\R,\ (y,t)\in E\}$.
A function $f\in W^{1,1}_{loc}(\omega)$ is \emph{(locally) area minimizing} if $G_f$ is (locally) perimeter minimizing in $\omega\cdot\R$.

%We say that $f\in W^{1,1}_{loc}(\R^2)\cap C^0(\R^2)$ is a \emph{local area minimizer} if for all $E\subset\R^2$ and all $g\in W^{1,1}_{loc}(\R^2)\cap C^0(\R^2)$ with $\spt(f-g)\ssubset E$ we have
%\[
%\scr A_f(E)\le \scr A_g(E) .
%\]
%This notion of area minimizer is a priori weaker than the subgraph $G_f$ being locally perimeter minimizer: Indeed, it is not known whether minimizing the area among graphs does imply minimizing the perimeter among sets of finite perimeter, with the same boundary conditions.

We say that $f\in W^{1,1}_{loc}(\omega)$ is \emph{stationary} if for all $\varphi\in C^\infty_c(\omega)$
\[
I_f(\varphi) := \left.\frac{\dd}{\dd\epsilon}\scr A_{f+\epsilon\varphi}(\spt(\varphi))\right|_{\epsilon=0} = 0 .
\]
We say that $f\in W^{1,1}_{loc}(\omega)$ is \emph{stable} if it is stationary and for all $\varphi\in C^\infty_c(\omega)$
\[
II_f(\varphi) := \left.\frac{\dd^2}{\dd\epsilon^2}\scr A_{f+\epsilon\varphi}(\spt(\varphi))\right|_{\epsilon=0} \ge 0 .
\]
The functionals $I_f$ and $II_f$ are called \emph{first} and \emph{second variation} of $f$, respectively.
It is clear that, if $f$ is a local area minimizer, then it is stable.

By~\cite[Remark 3.9]{MR2455341}, if $f\in W^{1,1}_{loc}(\omega)$, for some $\omega\subset\R^2$ open, then
\begin{align*}
	I_f(\varphi) &= - \int_{\omega} \frac{\grad^ff}{\sqrt{1+(\grad^ff)^2}} (\grad^f\varphi+\de_tf\,\varphi) \dd\LL^2 , \\
	II_f(\varphi) &=\int_{\omega} 
			\left[ \frac{(\grad^f\varphi+\de_tf\,\varphi)^2}{\left( 1+(\grad^ff)^2 \right)^{3/2}}
			+  \frac{\grad^ff}{\sqrt{1+(\grad^ff)^2}} \de_t(\varphi^2)\right]
		\dd\LL^2
\end{align*}
for all $\varphi\in C^\infty_c(\omega)$.
Notice that the formal adjoint of $\grad^f$ is $(\grad^f)^*\varphi=-\grad^f\varphi-\de_tf\,\varphi$.
By means of the triangle and the Hölder inequalities, one can easily show the following lemma.

\begin{lemma}\label{lem08261641}
	Let $\omega\subset\R^2$ open, then $I_f(\varphi)$ and $II_f(\varphi)$ are continuous in the $W^{1,2}_{loc}$ topology for $f\in W^{1,2}_{loc}(\omega)$ or $\varphi\in W^{1,2}_{0}(\omega)$ fixed, that is,
	\begin{itemize}
	\item[(i)] If $f_n\to f$ in $W^{1,2}_{loc}(\omega)$ and $\varphi\in W^{1,2}_{0}(\omega)$ with $\spt(\varphi)\Subset\omega$,
	then $\lim_{n\to\infty} I_{f_n}(\varphi)=I_f(\varphi)$  and $\lim_{n\to\infty} II_{f_n}(\varphi)=II_f(\varphi)$.
	\item[(ii)] If $f\in W^{1,2}_{loc}(\omega)$, $\varphi_n\to \varphi$ in $W^{1,2}_{loc}(\omega)$ and there exists $\omega'\ssubset\omega$ with $\varphi_n\in C^\infty_c(\omega')$ for each $n$, then $\lim_{n\to\infty} I_{f}(\varphi_n)=I_f(\varphi)$  and $\lim_{n\to\infty} II_{f}(\varphi_n)=II_f(\varphi)$.
	\end{itemize}

\end{lemma}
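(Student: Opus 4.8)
The plan is to reduce everything to elementary inequalities applied to the explicit integral formulas for $I_f(\varphi)$ and $II_f(\varphi)$ recalled just above. I will treat the two statements in parallel, since they differ only in which argument is held fixed. First observe that the relevant quantities are built out of three bounded nonlinear functions of the scalar $s=\grad^ff$, namely $g_1(s)=s/\sqrt{1+s^2}$, $g_2(s)=(1+s^2)^{-3/2}$ and $g_3(s)=s/\sqrt{1+s^2}$ again, all of which are globally Lipschitz on $\R$ with $|g_i|\le 1$. Moreover $\grad^f\varphi+\de_tf\,\varphi = \de_y\varphi + f\,\de_t\varphi + \de_tf\,\varphi = \de_y\varphi + \de_t(f\varphi)$, which is linear in $\varphi$ and controlled in $L^2(\omega')$ by $\|\varphi\|_{W^{1,2}}$ and $\|f\|_{W^{1,2}(\omega')}$ once we restrict to a fixed relatively compact $\omega'\Supset\spt\varphi$; similarly $\de_t(\varphi^2)=2\varphi\,\de_t\varphi$ is controlled in $L^1$ by $\|\varphi\|_{W^{1,2}}^2$.

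For part~(ii), fix $f$ and let $\varphi_n\to\varphi$ in $W^{1,2}_{loc}(\omega)$ with all $\varphi_n\in C^\infty_c(\omega')$ for a fixed $\omega'\Subset\omega$. In the integrand of $I_f$ the factor $g_1(\grad^ff)$ is a fixed $L^\infty$ function, so $I_f(\varphi_n)-I_f(\varphi)$ equals the integral of that bounded factor against $\de_y(\varphi_n-\varphi)+\de_t\big(f(\varphi_n-\varphi)\big)$, which tends to $0$ by Cauchy--Schwarz since $\varphi_n-\varphi\to0$ and $\de_t(\varphi_n-\varphi)\to0$ in $L^2(\omega')$ and $f\in L^2(\omega')$. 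For $II_f$ one writes $a_n^2-a^2=(a_n-a)(a_n+a)$ with $a_n=\grad^f\varphi_n+\de_tf\,\varphi_n$, so the first term is handled by Cauchy--Schwarz using that $a_n$ is bounded in $L^2(\omega')$ and $a_n-a\to0$ in $L^2(\omega')$, while the second term is handled because $g_3(\grad^ff)$ is bounded and $\de_t(\varphi_n^2)-\de_t(\varphi^2)=2(\varphi_n\de_t\varphi_n-\varphi\de_t\varphi)\to0$ in $L^1(\omega')$ by the same product decomposition.

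Part~(i) is the slightly more delicate direction and I expect it to be the main (though still routine) obstacle, because now the nonlinear coefficients themselves vary: with $f_n\to f$ in $W^{1,2}_{loc}$ we also have $\grad^{f_n}f_n\to\grad^ff$ in $L^1_{loc}$ — indeed $\grad^{f_n}f_n-\grad^ff = \de_y(f_n-f)+f_n\de_t(f_n-f)+(f_n-f)\de_tf$, and each summand tends to $0$ in $L^1(\omega')$ (for the middle one use that $\|f_n\|_{L^2(\omega')}$ is bounded). Passing to a subsequence we may assume $\grad^{f_n}f_n\to\grad^ff$ a.e. Then $g_i(\grad^{f_n}f_n)\to g_i(\grad^ff)$ a.e. by continuity of $g_i$, and $|g_i|\le1$ gives dominated convergence of these coefficients in every $L^q(\omega')$, $q<\infty$. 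Splitting $I_{f_n}(\varphi)-I_f(\varphi)$ (resp. the analogous difference for $II$) into a term where the nonlinear coefficient is replaced but the linear factor $\grad^f\varphi+\de_tf\,\varphi$ — which also converges, since $\de_tf_n\to\de_tf$ in $L^2(\omega')$ — is kept, plus a term where the coefficient is frozen, and applying Hölder to each piece, every contribution tends to $0$. Since the limit is the same for every subsequence, the full sequence converges, which proves~(i). The only points requiring care are keeping all cross terms inside a single fixed $\omega'\Supset\spt\varphi$ and invoking the subsequence argument to upgrade a.e. convergence of $\grad^{f_n}f_n$ from the $L^1_{loc}$ convergence; neither presents a genuine difficulty.
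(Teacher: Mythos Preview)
Your proposal is correct and is essentially the paper's approach: the paper gives no detailed proof, only the remark that the lemma follows ``by means of the triangle and the H\"older inequalities,'' and your plan is precisely the natural fleshing-out of that remark. The one additional device you introduce---passing to a subsequence to get a.e.\ convergence of $\nabla^{f_n}f_n$ and then invoking dominated convergence on the bounded nonlinearities $g_i$---is a clean and standard way to close the argument and is entirely in the spirit of what the paper leaves implicit.
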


%%%%%%%%%%%%%%%%%%%%%%%%%%%%%%%%%%%%%%%%%%%%%%%%%%%%%%%%%%%%%%%
%%%%%%%%%%%%%%%%%%%%%%%%%%%%%%%%%%%%%%%%%%%%%%%%%%%%%%%%%%%%%%%
\section[Lagrangian parametrizations]{Existence and regularity of Lagrangian homeomorphisms}\label{sec09022311}

%%%%%%%%%%%%%%%%%%%%%%%%%%%%%%%%%%%%%%%%%%%%%%%%%%%%%%%%%%%%%%%
\subsection{Definition of Lagrangian parametrization}   
Roughly speaking, a Lagrangian parametrization of $\nabla^f$ is a continuous  ordered selection of integral curves of the vector field $\nabla^f$ on $\omega$ with respect to a parameter $\tau$, which covers all of $\omega$.
For $\omega\subset\R^2$ and $r\in\R$, we set
\[
\omega_{1,r}:=\left\{y\in\R:\, (y,r)\in \omega\right\}
\qquad\text{ and }\qquad
\omega_{2,r}:=\left\{t\in\R:\, (r,t)\in \omega\right\} .
\]

\begin{definition}[Lagrangian parameterization]\label{Lagrpar}
	Let $\omega\subset\R^2$ be an open set and $f:\omega\to\R$ a continuous function.
	A \emph{ Lagrangian parameterization} associated with the vector field $\nabla^f$
	is a  continuous map $\Psi:\,\tilde\omega\to\omega$, with $\tilde\omega$ open, that satisfies
	\begin{itemize}
		\item[(L.1):] $\Psi(\tilde\omega)=\,\omega$;
		\item[(L.2):] $\Psi(s,\tau)= (s,\chi(s,\tau))$ for a suitable continuous function $\chi:\,\tilde\omega\to\R$ and, for every $s\in\R$, the function
		$\tilde\omega_{2,s}\ni \tau\mapsto\chi(s,\tau)$ is nondecreasing;
		\item[(L.3):] for every $ \tau\in\R$, for every $(s_1,s_2)\subset\tilde\omega_{1,\tau}$, the curve $(s_1,s_2)\ni s\mapsto\Psi(s,\tau)$ is absolutely continuous and it is an integral curve of $\nabla^f$, that is
		\begin{equation*}%\label{E:generalODE}
			\partial_s\Psi(s,\tau)=\nabla^f(\Psi(s,\tau))\quad{\rm a.e.\ }s\in(s_1,s_2).
		\end{equation*}
	\end{itemize}
	Equivalently, condition (L.3) can be rephrased as: 
	for every $ \tau\in\R$, for every $(s_1,s_2)\subset\tilde\omega_{1,\tau}$, we have $\de_s\chi(s,\tau)=f(s,\chi(s,\tau))$, for almost every $s\in(s_1,s_2)$.

	A Lagrangian parameterization $\Psi:\,\tilde\omega\to\omega$, is said to be {\it absolutely continuous} if   it satisfies the  Lusin (N) condition, that is, for every $E\subset\tilde\omega$, if $\cal L^2(E)=\,0$ then $\cal L^2(\Psi(E))=\,0$.
	A {\it Lagrangian homeomorphism} $\Psi:\,\tilde\omega\to\omega$, is an injective Lagrangian parameterization.
	By the Invariance of Domain Theorem, the injectivity implies that a Lagrangian homeomorphism is indeed a homeomorphism.
\end{definition}

\begin{remark} 
	Definition \ref{Lagrpar} is an equivalent version of the definition of {\it Lagrangian parameterization to   function $f:\,\omega\to\R$}, introduced in   \cite{MR3400438} and then extended in \cite{MR3537830}, for studying different notions of continuous weak solutions for balance laws. 
\end{remark}

\begin{remark}\label{Ncondequiv} 
	Observe that, by Fubini's theorem, a Lagrangian parameterization $\Psi:\,\tilde\omega\to\omega$, $\Psi(s,\tau)= (s,\chi(s,\tau))$ (associated with a vector field $\nabla^f$) is absolutely continuous if and only if for each $\mathcal L^2$-negligible set $E\subset\tilde\omega$, we have that
\begin{equation*}%\label{Ncondfiber}
\mathcal L^1(\chi(s,E_{2,s}))=\,0\quad\mathcal L^1\text{-a.e.~}s\in\R\,. \qedhere
\end{equation*}
\end{remark}
\begin{remark}\label{rem05122131}
	Lagrangian parametrizations are not unique: If $\Psi(s,\tau)=(s,\chi(s,\tau))$ is a (absolutely continuous) Lagrangian parametrization and $\rho:\R\to\R$ is an absolutely continuous homeomorphism with $\rho'>0$, then $(s,\tau)\mapsto (s,\chi(s,\rho(\tau)))$ is again a (absolutely continuous) Lagrangian pa\-ra\-me\-tri\-za\-tion.
\end{remark}

%%%%%%%%%%%%%%%%%%%%%%%%%%%%%%%%%%%%%%%%%%%%%%%%%%%%%%%%%%%%%%%
\subsection{Rules for the change of variables}
A relevant feature of an absolutely continuous Lagrangian parameterization associated with the vector field $\nabla^f$ is that we can use it for a change of variables.
This is the essential tool of the Lagrangian approach to the equation of minimal surfaces equation. 

When a homeomorphism $\Psi:\tilde\omega\to\omega$ is fixed, we will denote by $\tilde u$ or $(u)^{\widetilde{ }}$ the composition $u\circ\Psi:\tilde\omega\to\bar\R$ with a function $u:\omega\to\bar\R$.

One can  prove the following area formula for absolutely continuous Lagrangian parameterizations:
\begin{lemma}[Area formula for absolutely continuous Lagrangian parameterizations]\label{lemafLp} 
	Let $\Psi:\tilde\omega\to\omega$, $\Psi(s,\gt)=\,(s,\chi(s,\gt))$, be an absolutely continuous Lagrangian parameterisation associated with a vector field $\nabla^f$. 
	Let $\eta:\omega\to\bar\R$ be a Borel summable function. 
	Then
	\[
	\int_{\tilde\omega}\tilde\eta(s,\tau) \,\de_\gt\chi(s,\tau)\,\dd s\dd\gt
		=\,\int_\omega\eta(y,t)\,\dd y\dd t\,.
	\]
\end{lemma}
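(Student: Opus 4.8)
The plan is to reduce the two-dimensional statement to a one-dimensional change of variables in the $\tau$-direction, fibered over the $s$-direction, and then integrate in $s$ using Fubini. First I would fix $s\in\R$ and look at the slice map $\chi(s,\cdot):\tilde\omega_{2,s}\to\omega_{1,s}$ (here $\omega_{1,s}$ is as defined before Definition~\ref{Lagrpar}, and I am using that $\Psi(s,\tau)=(s,\chi(s,\tau))$ has first coordinate $s$). By property (L.2), $\tau\mapsto\chi(s,\tau)$ is nondecreasing, hence it is differentiable for $\mathcal L^1$-a.e.\ $\tau$, the derivative $\de_\tau\chi(s,\tau)$ is the density of the absolutely continuous part of the pushforward measure $\chi(s,\cdot)_\#(\mathcal L^1\restriction\tilde\omega_{2,s})$, and the one-dimensional area formula for monotone functions gives
\[
\int_{\tilde\omega_{2,s}}\tilde\eta(s,\tau)\,\de_\tau\chi(s,\tau)\dd\tau
\;=\;\int_{\omega_{1,s}}\eta(s,t)\,\dd t
\]
\emph{provided} the monotone function $\chi(s,\cdot)$ has no singular part, i.e.\ provided it maps $\mathcal L^1$-null sets to $\mathcal L^1$-null sets on that slice. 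Then I would integrate this identity over $s\in\R$ and invoke Fubini's theorem on both sides — on the left directly, on the right because $\{(y,t):t\in\omega_{1,y}\}=\omega$ — to obtain the claimed two-dimensional formula.

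The key point that makes the slicewise argument legitimate is exactly the absolute continuity hypothesis on $\Psi$, in the fibered form recorded in Remark~\ref{Ncondequiv}: for every $\mathcal L^2$-null $E\subset\tilde\omega$ one has $\mathcal L^1(\chi(s,E_{2,s}))=0$ for $\mathcal L^1$-a.e.\ $s$. I would apply this with a suitable $E$ — for instance, take $E$ to be the union over a measure-theoretically large set of slices of the ($\mathcal L^1$-null) set on which $\tau\mapsto\chi(s,\tau)$ is non-differentiable or its derivative fails to capture the pushforward, using a measurable-selection/Fubini argument to package it as a single $\mathcal L^2$-null subset of $\tilde\omega$ — to conclude that for $\mathcal L^1$-a.e.\ $s$ the slice map $\chi(s,\cdot)$ is non-atomic-free in the right sense, so that the one-dimensional area formula with the honest derivative $\de_\tau\chi$ holds without a singular remainder. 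Measurability of $s\mapsto\int_{\tilde\omega_{2,s}}\tilde\eta(s,\tau)\de_\tau\chi(s,\tau)\dd\tau$ and of the inner integral on the right follows from standard Fubini-type measurability results once $\eta$ is Borel and $\chi$ is continuous (hence Borel) with $\de_\tau\chi$ a Borel function of $(s,\tau)$ as a pointwise a.e.\ limit of difference quotients.

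By a routine reduction I would first prove the identity for $\eta\ge 0$ (so that both sides are unambiguously defined in $[0,+\infty]$ and Tonelli applies without integrability worries), then split a general Borel summable $\eta$ into positive and negative parts and subtract, the summability of $\eta$ guaranteeing the subtraction is meaningful. Note also that the case $\eta\equiv 1$ of the formula already expresses that $\int_{\tilde\omega}\de_\tau\chi\,\dd s\dd\tau=\mathcal L^2(\omega)$, which is the Jacobian-type statement underlying the change of variables used later.

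The step I expect to be the main obstacle is the careful bookkeeping in the slicing argument: one must ensure that the single $\mathcal L^2$-null exceptional set to which Remark~\ref{Ncondequiv} is applied simultaneously controls, for $\mathcal L^1$-a.e.\ $s$, (i) non-differentiability points of $\chi(s,\cdot)$, (ii) the support of the singular part of its pushforward, and (iii) any null set where $\tilde\eta(s,\cdot)$ might be redefined when pulling back $\eta$ — and that all the relevant maps are jointly measurable so that Fubini/Tonelli is applicable at each stage. This is precisely where the distinction between a mere Lagrangian parametrization and an \emph{absolutely continuous} one is used, and it is the only non-formal ingredient; everything else is the classical area formula for monotone functions of one variable plus Fubini.
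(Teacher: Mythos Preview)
Your slicewise strategy is sound and is a genuinely more elementary route than the paper's. The paper first upgrades the hypothesis to a two-dimensional regularity statement --- it shows $\chi\in W^{1,1}_{\rm loc}(\tilde\omega)$ by combining (L.3) (absolute continuity in $s$) with the slicewise absolute continuity in $\tau$ that you also isolate --- and then invokes a black-box area formula for Sobolev mappings satisfying Lusin~(N) (\cite[Theorem~A.35]{MR3184742}), obtaining the identity with a multiplicity factor $N(\Psi,\tilde\omega,\cdot)$ which it must then argue equals~$1$ a.e. You instead exploit the triangular form $\Psi(s,\tau)=(s,\chi(s,\tau))$ to stay in dimension one throughout: one-dimensional change of variables on each $\tau$-slice, then Fubini. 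This bypasses the Sobolev-map machinery entirely. The substantive step both arguments share --- and that you correctly flag as the crux --- is the quantifier swap from Remark~\ref{Ncondequiv} (``for each $\mathcal L^2$-null $E$, for a.e.\ $s$\dots'') to the slicewise Lusin~(N) property (``for a.e.\ $s$, for every $\mathcal L^1$-null $N$\dots''). Your proposed resolution is the right one, and it can be made concrete: take $E$ to be the Borel, $\mathcal L^2$-null set of pairs $(s,\tau)$ at which $\chi(s,\cdot)$ fails to have a finite derivative; since a function with finite pointwise derivative on a set already maps null subsets of that set to null sets, nullity of $\chi(s,E_{2,s})$ then forces $\chi(s,\cdot)$ to satisfy Lusin~(N), hence to be absolutely continuous. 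Two small points to tidy: the target of the slice map should read $\omega_{2,s}$, not $\omega_{1,s}$; and you should record that $\chi(s,\cdot)$ is onto $\omega_{2,s}$ by (L.1), so the right-hand side of your slice identity is indeed $\int_{\omega_{2,s}}\eta(s,t)\,\dd t$.
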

\begin{proof}
Let us begin to observe that
\begin{equation}\label{chiinW11}
\chi\in W^{1,1}_{\rm loc}(\tilde\omega)\,,
\end{equation}
and 
\begin{equation}\label{dertauchinonneg}
\partial_\gt\chi(s,\gt)\ge\,0\quad \mathcal L^2\text{-a.e.~}(s,\gt)\in\tilde\omega\,.
\end{equation}
Indeed, by Definiton \ref{Lagrpar} (L.3), it follows that, for each $\gt\in\R$, for every $(s_1,s_2)\subset\tilde\omega_{1,\tau}$,  
\begin{equation}\label{chi_tauac}
(s_1,s_2)\ni s\mapsto\chi(s,\tau)\text{  is absolutely continuous.}
\end{equation}
On the other hand, by Remark \ref{Ncondequiv}, it follows that for a.e.~$s\in\R$, for every $(\gt_1,\gt_2)\subset\tilde\omega_{2,s}$,  $(\gt_1,\gt_2)\ni \tau\mapsto\chi(s,\tau)$ satisfies the Lusin (N) condition, being also continuous and non decreasing, we can also infer (see, for instance, \cite[Theorem 7.45]{gariepy1995modern} or \cite{MR3729481})
	%\footnote{Non capisco la referenza: Non trovo Garapie--Ziemer su Mathscinet. Il libro presumo sia
	%\url{https://www.amazon.co.uk/Real-Analysis-Ronald-F-Gariepy/dp/0534944043}
	%oppure \url{https://books.google.co.uk/books/about/Modern_Real_Analysis.html?id=CBQNNQAACAAJ&redir_esc=y}.}
\begin{equation}\label{chi_sac}
(\gt_1,\gt_2)\ni \tau\mapsto\chi(s,\tau)\text{  is absolutely continuous and non decreasing.}
\end{equation}
By \eqref{chi_tauac} and \eqref{chi_sac} and applying a well-known result about Sobolev spaces (see \cite[\S 4.9.2]{MR3409135}), \eqref{chiinW11} and \eqref{dertauchinonneg} follow. 
By \eqref{chiinW11}  and since $\Psi$ satisfies the Lusin (N)-condition, we can  the area formula for Sobolev mappings (see, for instance,
\cite[Theorem A.35]{MR3184742}), that is
\begin{equation}\label{areaformula}
\int_{\tilde\omega}\eta(\Psi(s,\gt))\,|J_\Psi(s,\gt)|\,dsd\gt=\,\int_{\Psi(\tilde\omega)}\eta(y,t)\,N(\Psi,\tilde\omega,(y,t))\,dydt
\end{equation}
where the multiplicity function $N(\Psi,\tilde\omega,(y,t))$ of $\Psi$  is defined as the number of preimages of $(y,t)$ under $\Psi$ in $\tilde\omega$ and 
\[
\begin{split}
J_\Psi(s,\gt)&:=\,\det D\Psi(s,\gt)=\det \left [
\begin{matrix}
1&0\\
\de_s\chi(s,\gt)&\displaystyle{{\de_\gt\chi(s,\gt)}}
\end{matrix}
\right]\\
&=\,\partial_\gt\chi(s,\gt)\quad \mathcal L^2\text{-a.e.~}(s,\gt)\in\tilde\omega\,.
\end{split}\,
\]
The left-hand side of \eqref{areaformula} is thus $\int_{\tilde\omega}\tilde\eta(s,\tau) \,\de_\gt\chi(s,\tau)\,\dd s\dd\gt$.

Let us show that $N=1$ for almost every $(y,t)\in\omega$.
First, observe that,
\begin{equation*}%\label{charmultipl}
N(\Psi,\tilde\omega,(y,t))=\,N(\chi(y,\cdot),\tilde\omega_{2,y},t)\quad\forall\,(y,t)\in\R^2\,.
\end{equation*}
Second, if $y\in\R$, then the set $\left\{t\in\R:\, N(\chi(y,\cdot),\tilde\omega_{2,y},t)\ge\,2\right\}$
is at most countable, because $\tau\mapsto\chi(y,\tau)$ is continuous and non-decreasing.
We conclude that $N=1$ for almost every $(y,t)\in\omega$ as claimed.\\
Therefore, the right hand side of \eqref{areaformula} is $\int_{\Psi(\tilde\omega)}\eta(y,t) dydt$.
\end{proof}

However, in order to perform the change of variables also on derivatives, we need additional assumptions on $\Psi$.
For our purposes, we will consider the case when $\Psi$ is locally biLipschitz.
\begin{remark} \label{chainlip}Let $g\in Lip_{\rm loc}(\omega)$ and $\Psi:\,\tilde\omega\to\omega$ be a locally biLipschitz homeomorphism. Then it is easy to see that the following chain rule holds:
\begin{equation}\label{CR}
\tilde g\in Lip_{\rm loc}(\tilde\omega)\text{ and } D\tilde g(s,\gt)=\,Dg(\Psi(s,\gt))\,D\Psi(s,\gt)\quad\mathcal L^2\text{a.e.-}(s,\gt)\in\tilde\omega\,,
\end{equation}
where $D\tilde g$ and $Dg$ respectively denote the gradient of $\tilde g$ and $g$ understood as a  $1\times 2$ matrix  and $D\Psi$ denotes the Jacobian $2\times 2$ matrix of $\Psi$.  Indeed it is trivial that $ \tilde g\in Lip_{\rm loc}(\tilde\omega)$ being the composition of Lipschitz functions. Thus, by Radamecher's theorem, there exist $D\tilde g$, $Dg$ and $D\Psi$ either from the pointwise point of view and in sense of distribution on their domain. Moreover, since both $\Psi$ and $\Psi^{-1}$ satisfy the Lusin (N) condition,  if $\omega_g$ and $\tilde\omega_\Psi$ respectively denote the points of differentiability of $g$ in $\omega$ and of $\Psi$ in $\tilde\omega$, then 
\begin{equation*}
\mathcal L^2(\omega\setminus (\Psi(\tilde\omega_\Psi)\cap\omega_g))= \mathcal L^2(\tilde\omega\setminus (\tilde\omega_\Psi\cap\Psi^{-1}(\omega_g))=\,0\,.
\end{equation*}
Thus, for each $(s,\gt)\in\tilde\omega_\Psi\cap\Psi^{-1}(\omega_g)$, $\tilde g$ is differentiable in classical sense at  $(s,\gt)$ and \eqref{CR} holds.
\end{remark}

\begin{theorem}[Rules for the change of variables]\label{thm05112336super}
	Let $\Psi:\tilde\omega\to\omega$, $\Psi(s,\gt)=\,(s,\chi(s,\gt))$, be a locally biLipschitz Lagrangian homeomorphism associated with a vector field $\nabla^f$ and assume that $f\in \Lip(\omega)$.
	Then we have
	\begin{equation}\label{eq05121214}
	\de_s\de_\tau\chi = \de_\tau\de_s\chi = (\de_tf)^{\widetilde{}}\, \de_\tau\chi = \de_\tau\tilde f ,
	\end{equation}
	and for every compact $K\subset\tilde\omega$ there is $C>0$ such that $\de_\tau\chi(s,\tau)>C$ for almost all $(s,\tau)\in K$.
	Furthermore, for each $\varphi\in Lip(\omega)$,
	\begin{align}\label{eq08261555}
	(\de_t\varphi)^{\widetilde{}} &= \frac{\de_\tau\tilde\varphi}{\de_\tau\chi} , &
	(\de_y\varphi)^{\widetilde{}} &= \de_s\tilde\varphi - \frac{\tilde f}{\de_\tau\chi} \de_\tau\tilde\varphi \qquad\text{ and }  &
	(\grad^f\varphi)^{\widetilde{}} &= \de_s\tilde\varphi .
	\end{align}
\end{theorem}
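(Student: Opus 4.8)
The plan is to exploit the two different "slicings" of the biLipschitz homeomorphism $\Psi$ — along $s$ (where $\de_s\chi = \tilde f$ by (L.3)) and along $\tau$ (where $\tau\mapsto\chi(s,\tau)$ is monotone) — together with Rademacher's theorem and the chain rule of Remark \ref{chainlip}. First I would establish the identity $\de_\tau\tilde f = (\de_t f)^{\widetilde{}}\,\de_\tau\chi$: since $f\in\Lip(\omega)$ and $\Psi$ is locally biLipschitz, Remark \ref{chainlip} gives $\tilde f\in\Lip_{\rm loc}(\tilde\omega)$ with $D\tilde f = (Df)\circ\Psi\cdot D\Psi$ a.e., and reading off the $\tau$-column of this matrix product (using that $\Psi(s,\tau)=(s,\chi(s,\tau))$, so the $\tau$-column of $D\Psi$ is $(0,\de_\tau\chi)^{\mathsf T}$) yields exactly $\de_\tau\tilde f = (\de_t f)^{\widetilde{}}\,\de_\tau\chi$. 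Next, for the chain $\de_s\de_\tau\chi = \de_\tau\de_s\chi$: the function $\chi$ is locally Lipschitz (as the second component of $\Psi$), hence in $W^{1,\infty}_{\rm loc}$, and moreover by (L.3) we have $\de_s\chi(s,\tau) = f(s,\chi(s,\tau)) = \tilde f(s,\tau)$ for a.e.\ $s$, for every $\tau$; since both sides are continuous in the remaining variable this holds $\cal L^2$-a.e., so $\de_s\chi = \tilde f$ as $L^\infty_{\rm loc}$ functions. Differentiating this identity in $\tau$ in the distributional sense (legitimate because $\de_s\chi=\tilde f\in\Lip_{\rm loc}$) gives $\de_\tau\de_s\chi = \de_\tau\tilde f$, and equality of mixed distributional derivatives for the $W^{1,\infty}_{\rm loc}$ function $\chi$ — equivalently $W^{2,\infty}_{\rm loc}$ once we know $\de_s\chi\in\Lip_{\rm loc}$ — gives $\de_s\de_\tau\chi = \de_\tau\de_s\chi$. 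Combining the two displays proves \eqref{eq05121214}.

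For the lower bound on $\de_\tau\chi$: since $\Psi$ is \emph{locally biLipschitz}, $\Psi^{-1}$ is locally Lipschitz, so on any compact $K\subset\tilde\omega$ the Jacobian $|J_\Psi| = \de_\tau\chi$ (using $\de_\tau\chi\ge 0$ from (L.2)/\eqref{dertauchinonneg}) is bounded below away from zero a.e.\ — this is the standard fact that a biLipschitz map has Jacobian bounded away from $0$ and $\infty$, which follows from the chain rule $D\Psi\cdot D(\Psi^{-1})\circ\Psi = \Id$ a.e.\ together with $\|D(\Psi^{-1})\|_{L^\infty(\Psi(K))}<\infty$. Concretely, $\de_\tau\chi(s,\tau)\cdot \|D\Psi^{-1}\|_{L^\infty}^{-2}\le |J_\Psi(s,\tau)|\cdot|\det D\Psi^{-1}(\Psi(s,\tau))| = 1$ is the wrong direction; instead one uses that $|\det D\Psi^{-1}|\le\|D\Psi^{-1}\|^2$ a.e.\ and $|\det D\Psi|\cdot|\det D\Psi^{-1}\circ\Psi|=1$ a.e., giving $|\det D\Psi|\ge\|D\Psi^{-1}\|_{L^\infty(\Psi(K))}^{-2}=:C>0$.

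For \eqref{eq08261555}: apply Remark \ref{chainlip} to $\varphi\in\Lip(\omega)$, obtaining $D\tilde\varphi = (D\varphi)\circ\Psi\cdot D\Psi$ a.e., i.e.
\begin{equation*}
(\de_s\tilde\varphi,\ \de_\tau\tilde\varphi) = \big((\de_y\varphi)^{\widetilde{}},\ (\de_t\varphi)^{\widetilde{}}\big)\begin{pmatrix}1 & 0\\ \de_s\chi & \de_\tau\chi\end{pmatrix} = \big((\de_y\varphi)^{\widetilde{}} + \tilde f\,(\de_t\varphi)^{\widetilde{}},\ (\de_t\varphi)^{\widetilde{}}\,\de_\tau\chi\big),
\end{equation*}
using $\de_s\chi=\tilde f$. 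The second component gives $(\de_t\varphi)^{\widetilde{}} = \de_\tau\tilde\varphi/\de_\tau\chi$ (division legitimate a.e.\ by the lower bound just proved), the first then gives $(\de_y\varphi)^{\widetilde{}} = \de_s\tilde\varphi - \tilde f\,(\de_t\varphi)^{\widetilde{}} = \de_s\tilde\varphi - \frac{\tilde f}{\de_\tau\chi}\de_\tau\tilde\varphi$, and finally $(\grad^f\varphi)^{\widetilde{}} = (\de_y\varphi + f\de_t\varphi)^{\widetilde{}} = (\de_y\varphi)^{\widetilde{}} + \tilde f\,(\de_t\varphi)^{\widetilde{}} = \de_s\tilde\varphi$. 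I expect the main obstacle to be the rigorous justification that $\de_s\chi = \tilde f$ holds $\cal L^2$-a.e.\ (not merely on a.e.\ horizontal line) and that one may therefore differentiate it distributionally in $\tau$ to get equality of mixed partials — this requires care with the Fubini-type argument of Remark \ref{Ncondequiv} and the upgrade of $\chi$ from $W^{1,\infty}_{\rm loc}$ to effectively $W^{2,\infty}_{\rm loc}$ in the mixed derivative; everything else is a bookkeeping application of the chain rule and the biLipschitz Jacobian bounds.
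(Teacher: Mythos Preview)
Your proposal is correct and follows essentially the same route as the paper: equality of mixed distributional derivatives for $\chi$, differentiation in $\tau$ of the identity $\de_s\chi=\tilde f$ (using $f\in\Lip$), the biLipschitz Jacobian bound to get $\de_\tau\chi$ bounded away from zero, and Remark~\ref{chainlip} for \eqref{eq08261555}. The paper's argument for the lower bound is slightly more direct---it writes out $D\Psi^{-1}$ explicitly and observes that its determinant $1/\de_\tau\chi$ must be locally bounded---which avoids the detour through $\|D\Psi^{-1}\|^2$ that you flagged as ``the wrong direction'' before correcting yourself.
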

\begin{proof}
	The first equality in~\eqref{eq05121214} has to be considered as an equality of distributions, being $\de_s$ and $\de_\tau$ distributional derivations.
	Next, if $f\in \Lip(\omega)$, then we are allowed to differentiate with respect to $\tau$ the identities
	\begin{equation*}%\label{}
	\partial_s\chi(s,\gt) = f(s,\chi(s,\gt)) = \tilde f(s,\tau) \quad\mathcal L^2{-a.e.~}(s,\gt)\in\tilde\omega .
	\end{equation*}
	Thus we obtain the other two identities in~\eqref{eq05121214}.
	
	The Jacobian matrix of $\Psi^{-1}$ at $\Psi(s,\tau)$ is 
	\[
	D\Psi^{-1}(\Psi(s,\gt))=\frac1{\de_\tau\chi(s,\tau)}
		\begin{pmatrix} \de_\tau\chi(s,\tau) & 0 \\ -\de_s\chi(s,\tau) & 1 \end{pmatrix}.
	\]
	Since $\Psi$ is biLipschitz, the determinant of this matrix is locally bounded from above, hence $\de_\tau\chi$ is locally bounded away from zero.
	
	Finally, the equalities in~\eqref{eq08261555} follow directly from Remark \ref{chainlip}.
\end{proof}

%%%%%%%%%%%%%%%%%%%%%%%%%%%%%%%%%%%%%%%%%%%%%%%%%%%%%%%%%%%%%%%
\subsection{Existence of biLipschitz Lagrangian homeomorphisms}

\begin{theorem}[Existence of a biLipschitz Lagrangian homeomorphism associated with a Lipschitz vector field $\nabla^f$]\label{existLpLip} 
	Let $\omega\subset\R^2$ be an open set and $f\in\Lip(\omega)\cap L^\infty(\omega)$.
	Then there exists a locally biLipschitz Lagrangian homeomorphism $\Psi:\tilde\omega\to\omega$, $\Psi(s,\tau)=(s,\chi(s,\tau))$, associated with $\nabla^f$.
	Moreover, if $\omega=\R^2$, then $\tilde\omega=\R^2$ and such Lagrangian parametrization $\Psi$ is unique if we require $\chi(0,\tau)=\tau$ for all $\tau\in\R$.
\end{theorem}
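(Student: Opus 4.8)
The plan is to construct $\Psi$ by fixing the ``initial slice'' $\{s=0\}$ and flowing along $\nabla^f=\de_y+f\de_t$, exploiting that $f$ is globally Lipschitz (hence the ODE $\de_s\chi(s,\tau)=f(s,\chi(s,\tau))$ has unique solutions by Cauchy--Lipschitz) and bounded (hence solutions exist for all times $s$ when $\omega=\R^2$). Concretely, for $\omega=\R^2$ I would define $\chi(s,\tau)$ to be the value at parameter $s$ of the unique maximal solution of the ODE with $\chi(0,\tau)=\tau$; this immediately gives (L.2)'s form and (L.3). For a general open $\omega$ one runs the same construction but only for as long as the integral curve stays in $\omega$, and takes $\tilde\omega:=\Psi^{-1}(\omega)$ — one must check $\tilde\omega$ is open, which follows from continuous dependence of the flow on initial conditions together with openness of $\omega$.

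Next I would verify the three properties in Definition~\ref{Lagrpar}. Property (L.3) is the defining ODE, so it holds by construction (and the curves are even $C^1$ in $s$ since $f$ is continuous). For (L.2), monotonicity of $\tau\mapsto\chi(s,\tau)$ is exactly the statement that two integral curves of a Lipschitz vector field starting ordered on $\{s=0\}$ stay ordered — this is the standard comparison/uniqueness argument: if $\chi(s_0,\tau_1)=\chi(s_0,\tau_2)$ for some $s_0$ with $\tau_1<\tau_2$, then by uniqueness the two curves coincide, contradicting $\chi(0,\tau_1)=\tau_1<\tau_2=\chi(0,\tau_2)$; strict monotonicity on $\{s=0\}$ then propagates, and since $\chi$ is continuous in $(s,\tau)$ (again continuous dependence on initial data) we get nondecreasing, in fact strictly increasing, in $\tau$. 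Injectivity of $\Psi$ is then immediate because the first coordinate is $s$ and the second is strictly monotone in $\tau$ for each fixed $s$; by the Invariance of Domain remark in Definition~\ref{Lagrpar}, $\Psi$ is a homeomorphism onto its image, and (L.1) holds by the definition $\tilde\omega=\Psi^{-1}(\omega)$ (together with the fact that every point of $\omega$ lies on such a curve, as it does: just take the curve through it and intersect with $\{s=0\}$... more carefully, start the curve at that point and follow it to $s=0$, which is possible in the $\R^2$ case; for general $\omega$ one uses that $\Psi(\tilde\omega)\supset\omega$ because each $p\in\omega$ is the image under $\Psi$ of its own backward-to-$s=0$ continuation, reparametrized appropriately).

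The main obstacle — and the real content beyond soft ODE theory — is the \textbf{local biLipschitz} regularity of $\Psi$, i.e. Lipschitz bounds for both $\chi$ and $\chi^{-1}$ in the $\tau$ variable (the $s$-direction is harmless: $\de_s\chi=f$ is bounded and Lipschitz). The key is Gronwall's inequality applied to the variational equation: if $L$ is a Lipschitz constant for $f$ on $\omega$ (or on a relevant compact region), then for $\tau_1<\tau_2$,
\[
|\chi(s,\tau_2)-\chi(s,\tau_1)| \le |\tau_2-\tau_1|\, e^{L|s|},
\]
by integrating $|\de_s(\chi(s,\tau_2)-\chi(s,\tau_1))|\le L|\chi(s,\tau_2)-\chi(s,\tau_1)|$, and symmetrically the lower bound
\[
|\chi(s,\tau_2)-\chi(s,\tau_1)| \ge |\tau_2-\tau_1|\, e^{-L|s|}
\]
by the reverse Gronwall estimate. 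On any compact $K\subset\tilde\omega$ the parameter $s$ ranges over a bounded set, so these give two-sided Lipschitz control, whence $\Psi$ is locally biLipschitz; the lower bound is exactly what yields $\de_\tau\chi>C>0$ a.e.\ on $K$ as used in Theorem~\ref{thm05112336super}.

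Finally, for the uniqueness statement when $\omega=\R^2$: suppose $\Psi'(s,\tau)=(s,\chi'(s,\tau))$ is another biLipschitz Lagrangian homeomorphism with $\chi'(0,\tau)=\tau$. For fixed $\tau$, both $s\mapsto\chi(s,\tau)$ and $s\mapsto\chi'(s,\tau)$ are (absolutely continuous, hence by continuity of $f$ actually $C^1$) integral curves of $\nabla^f$ agreeing at $s=0$, so by Cauchy--Lipschitz uniqueness they coincide for all $s$; thus $\chi=\chi'$. (Without the normalization $\chi(0,\tau)=\tau$, uniqueness fails precisely by the reparametrization freedom noted in Remark~\ref{rem05122131}.) Assembling these pieces — global flow existence from boundedness, the homeomorphism property from uniqueness plus invariance of domain, and the two-sided Gronwall bounds for biLipschitz regularity — completes the proof.
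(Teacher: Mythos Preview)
Your approach is essentially the paper's: build the flow of $\nabla^f$ from the slice $\{s=0\}$, get existence/uniqueness from Cauchy--Lipschitz, global-in-$s$ solutions from boundedness of $f$, and locally biLipschitz regularity from two-sided Gronwall. For $\omega=\R^2$ your argument is complete and matches the paper (your lower Gronwall bound $|\chi(s,\tau_2)-\chi(s,\tau_1)|\ge e^{-L|s|}|\tau_2-\tau_1|$ is equivalent to the paper's way of showing the inverse $\rho(y,t)=\cal X(y,t;0)$ is locally Lipschitz).

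The gap is your treatment of a general open $\omega$. You propose to ``run the same construction but only for as long as the integral curve stays in $\omega$'' and set $\tilde\omega:=\Psi^{-1}(\omega)$. This is not well-posed: the flow is defined only where $f$ is, so you cannot first define $\Psi$ and then take a preimage; and if you instead define $\tilde\omega$ as the set of $(s,\tau)$ reachable from $(0,\tau)\in\omega$ by the flow inside $\omega$, you will in general fail (L.1). Consider $\omega=(1,2)\times\R$: it never meets $\{y=0\}$, so your $\tilde\omega$ is empty. Even when $\omega$ meets $\{y=0\}$, points of $\omega$ need not be connected to that slice by an integral curve lying in $\omega$. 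The phrase ``reparametrized appropriately'' does not rescue this. The paper handles the general case by first extending $f$ to $f^*\in\Lip(\R^2)\cap L^\infty(\R^2)$ via McShane's theorem (preserving both the Lipschitz and $L^\infty$ bounds), constructing the global $\Psi^*:\R^2\to\R^2$ for $f^*$, and then restricting to $\tilde\omega:=(\Psi^*)^{-1}(\omega)$; all properties, including (L.1), are then immediate.
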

\begin{proof}
	We can assume that $\omega=\R^2$.
	Indeed, by McShane's Extension Theorem of Lipschitz functions (see \cite{MR2039660}), if $f\in\Lip(\omega)\cap L^\infty(\omega)$, then there is an extension $f^*\in\Lip(\R^2)\cap L^\infty(\R^2)$ with $\Lip(f^*)=\Lip(f)$ and $\|f^*\|_{L^\infty(\R^2)} = \|f\|_{L^\infty(\omega)}$.
	Moreover, if $\Phi^*:\R^2\to\R^2$ is a Lagrangian homeomorphism associated with $\nabla^{f^*}$ with the properties stated in the Theorem, then its restriction $\Phi:=\Phi^*|_{\tilde\omega}$ to $\tilde\omega=\Phi^{-1}(\omega)$ still have all the stated properties.
	So, we assume  $\omega=\R^2$.
	
	Since $f\in\Lip(\R^2)$ and it is bounded, and by standard results from ODE's Theory (see \cite{MR1929104}), it is well-known that for every $(s_1,\tau_1)\in\R^2$ there is a unique $C^1$ function $\gamma:\R\to\R$ such that
	\begin{equation}\label{eq06220926}
	\begin{cases}
	\gamma'(s)=f(s,\gamma(s)) \qquad\text{for a.e.~}s\in\R, \\
	\gamma(s_1)=\tau_1 .
	\end{cases}
	\end{equation}
	Such $\gamma$ is in fact of class $C^{1,1}$.
	For $s_1,s_2,\tau_1\in\R$, define
	\[
	\cal X\big( s_1,\tau_1 ; s_2 \big) := \gamma(s_2) ,
	\]
	where $\gamma$ is the solution of the system above, depending on the initial conditions $(s_1,\tau_1)$.
	Using Grönwall's lemma, one can easily prove that, for every $s_1,s_2\in\R$ and every $\tau,\tau'\in\R$ we have
	$|\cal X\big( s_1,\tau  ; s_2 \big)-\cal X\big( s_1,\tau'  ; s_2 \big)| \le |\tau-\tau'| \exp(L|s_2-s_1|)$.
	Hence, the map $\tau\mapsto\cal X\big( s_1,\tau  ; s_2 \big)$ is locally Lipschitz, with a Lipschitz constant that is locally uniform in $s_1$ and $s_2$.
	% DIMOSTRAZIONE DEL FATTO DI SOPRA: DA QUI
	%	\footnote{
	%	Let $L=\Lip(f)$;
	%	Let $\gamma(s)=\cal X\big( s_1,\tau  ; s \big)$ and $\eta(s)=\cal X\big( s_1,\tau'  ; s \big)$;
	%	Set $u(s)=|\gamma(s)-\eta(s)|$;
	%	Then $u'(s)\le|\gamma'(s)-\eta'(s)|=|f(s,\gamma(s))-f(s,\eta(s))|\le L|\gamma(s)-\eta(s)| = Lu(s)$;
	%	By Grönwall's lemma, we have $u(s_2) \le u(s_1) \exp(|\int_{s_1}^{s_2} L \dd v|) = u(s_1)\exp(L|s_2-s_1|)$;
	%	Therefore, $|\cal X\big( s_1,\tau  ; s_2 \big)-\cal X\big( s_1,\tau'  ; s_2 \big)| \le |\tau-\tau'| \exp(L|s_2-s_1|)$.
	%	}
	% A QUI.
	
	By the uniqueness of solutions, for all $s_1,s_2,s_3,\tau_1\in\R$ the following identity holds:
	\[
	\cal X\big( s_2 , \cal X\big( s_1,\tau_1 ; s_2\big) ; s_3 \big) = \cal X\big( s_1,\tau_1 ; s_3 \big) .
	\]
	In particular, the map $\tau\mapsto\cal X\big( s_2,\tau  ; s_1 \big)$ is the inverse of $\tau\mapsto\cal X\big( s_1,\tau  ; s_2 \big)$, and thus they are locally biLipschitz homeomorphisms.
	
	Define $\chi:\R^2\to\R$,
	\[
	\chi(s,\tau) := \cal X\big( 0,\tau ; s \big) .
	\]
	By the previous discussion, $\tau\mapsto\chi(s,\tau)$ is a locally biLipschitz homeomorphism $\R\to\R$, for all $s\in\R$.
	Since $|f|$ is bounded, then, for all $s,s',\tau\in\R$,
	\begin{equation*}%\label{eq05121540}
	|\chi(s,\tau)-\chi(s',\tau)| \le \|f\|_{L^\infty} |s-s'| .
	\end{equation*}
	So, since $\chi$ is locally Lipschitz in $s$ and in $\tau$ with uniform constants, then $\chi:\R^2\to\R$ is locally Lipschitz.
	
	Define $\Psi:\R^2\to\R^2$ as $\Psi(s,\tau)=(s,\chi(s,\tau))$, which is locally Lipschitz.
	Notice that, by the uniqueness of solution to the above ODE, $\Psi$ is injective.
	Moreover, by the existence of a global solution to the above ODE for every initial conditions, $\Psi$ is surjective.
	By the Invariance of Domain Theorem, $\Psi$ is a homeomorphism.
	
	Moreover, $\Psi$ is locally biLipschitz.
	Indeed, its inverse is $\Psi^{-1}(y,t)=(y,\rho(y,t))$ with 
	\[
	\rho(y,t) = \cal X\big( y,t  ; 0 \big) .
	\]
	As before, we can prove that $\rho$ is locally Lipschitz in each variable independently. 
	Indeed, on the one hand we already showed that $\rho$ is locally Lipschitz in $t$, with the Lipschitz constant that is locally uniform in $y$.
	On the other hand, we have
	\begin{align*}
	|\rho(y,t)-\rho(y',t)|
	&= \left| \cal X\big( y',\cal X\big(y,t;y'\big)  ; 0\big) - \cal X\big( y',t ;0\big) \right| \\
	&\le C \left| \cal X\big( y,t ;y' \big) - t \right| \\
	&\le C \|f\|_{L^\infty} |y-y'| .
	\end{align*}
	We conclude that $\Psi:\R^2\to\R^2$ is a locally biLipschitz Lagrangian homeomorphism.
	
	Finally, notice that $\chi(0,\tau)=\tau$ for all $\tau\in\R$ and that the uniqueness of such $\chi$ follows from the uniqueness of solutions to~\eqref{eq06220926}.
\end{proof}

%%%%%%%%%%%%%%%%%%%%%%%%%%%%%%%%%%%%%%%%%%%%%%%%%%%%%%%%%%%%%%%
%%%%%%%%%%%%%%%%%%%%%%%%%%%%%%%%%%%%%%%%%%%%%%%%%%%%%%%%%%%%%%%
\section{Consequences of the first variation}\label{sec09022316}

If $f\in\Lip_{loc}(\omega)$ is a local area minimizer, then the  first variation formula vanishes, i.e., see~\cite{MR2455341}:
\begin{equation}\label{eq06221238}\tag{$1^{st}$VF}
	\forall \varphi\in C^\infty_c(\omega)
	\qquad\qquad
	I_f(\varphi)=0 .
\end{equation}
By Lemma~\ref{lem08261641}, the condition~\eqref{eq06221238} can be extended to $\varphi\in\Lip_c(\omega)$.

The aim of this section is to prove the following theorem.

\begin{theorem}\label{thm05111605}
	Suppose that $f\in\Lip_{loc}(\Omega)$ satisfies~\eqref{eq06221238}, where $\Omega\subset\R^2$ is open.
	Then $\grad^ff$ is locally Lipschitz, thus $f\in\Cow(\Omega)$, and $f$ is a weak Lagrangian solution of $\Delta^ff=0$ on $\Omega$.

	More in details, let $\omega\ssubset\Omega$, so that $f\in\Lip(\omega)\cap L^\infty(\omega)$,
	and let $\Psi:\tilde\omega\to\omega$, $\Psi(s,\tau)=(s,\chi(s,\tau))$, be a locally biLipschitz Lagrangian homeomorphism associated with $\grad^f$.
	Such a function exists by Theorem~\ref{existLpLip}.
	Let $s_1,s_2,\tau_1,\tau_2\in\R\cup\{+\infty,-\infty\}$ be such that $(s_1,s_2)\times(\tau_1,\tau_2)\subset\tilde\omega$ and let $\hat s\in(s_1,s_2)$.
	Then, for all $(s,\tau)\in(s_1,s_2)\times(\tau_1,\tau_2)$, 
	\begin{equation}\label{eq05122152}
	\chi(s,\tau) = a(\tau)\frac{(s-\hat s)^2}{2} + b(\tau) (s-\hat s) + c(\tau)
	\end{equation}
	where $c:(\tau_1,\tau_2)\to\R$ is locally biLipschitz on its image, $a(\tau)=\grad^ff(\hat s,c(\tau))$ and $b(\tau)=f(\hat s,c(\tau))$.
	Moreover, both $a$ and $b$ are locally Lipschitz.
	
	Up to an further locally biLipschitz change of variables, one can also assume $c(\tau)=\tau$ for all $\tau\in(\tau_1,\tau_2)$.
\end{theorem}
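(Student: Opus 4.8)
The plan is to push the first–variation identity~\eqref{eq06221238} forward to Lagrangian coordinates, deduce that the pulled-back intrinsic gradient depends on $\tau$ alone, and then integrate twice in $s$. Since all the assertions are local, fix $\omega\ssubset\Omega$, so that $f\in\Lip(\omega)\cap\LI(\omega)$, and let $\Psi\colon\tilde\omega\to\omega$, $\Psi(s,\tau)=(s,\chi(s,\tau))$, be the locally biLipschitz Lagrangian homeomorphism provided by Theorem~\ref{existLpLip}; write $\Psi^{-1}(y,t)=(y,\rho(y,t))$ with $\rho$ locally Lipschitz. Put $v:=\grad^ff=\de_yf+f\,\de_tf$, which lies in $L^\infty_{loc}(\omega)$ since $f$ is Lipschitz, and $w:=v/\sqrt{1+v^2}$, so that $|w|<1$ a.e.\ and $I_f(\varphi)=-\int_\omega w\,(\grad^f\varphi+\de_tf\,\varphi)\dd\LL^2$.

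First I would change variables in $I_f$. Applying the area formula of Lemma~\ref{lemafLp} with $\eta=-w(\grad^f\varphi+\de_tf\,\varphi)$ (bounded with compact support once $\varphi\in\Lip_c(\omega)$), together with the rules $(\grad^f\varphi)^{\widetilde{}}=\de_s\tilde\varphi$ and $(\de_tf)^{\widetilde{}}\de_\tau\chi=\de_\tau\tilde f=\de_s\de_\tau\chi$ from Theorem~\ref{thm05112336super}, and the product rule $\de_s(\tilde\varphi\,\de_\tau\chi)=(\de_s\tilde\varphi)\de_\tau\chi+\tilde\varphi\,\de_s\de_\tau\chi$, one obtains $I_f(\varphi)=-\int_{\tilde\omega}\tilde w\,\de_s(\tilde\varphi\,\de_\tau\chi)\dd s\dd\tau$. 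Because~\eqref{eq06221238} extends to $\varphi\in\Lip_c(\omega)$ and $\varphi\mapsto\tilde\varphi=\varphi\circ\Psi$ maps $\Lip_c(\omega)$ bijectively onto $\Lip_c(\tilde\omega)$, this gives $\int_{\tilde\omega}\tilde w\,\de_s(\psi\,\de_\tau\chi)\dd s\dd\tau=0$ for every $\psi\in\Lip_c(\tilde\omega)$. Expanding the product and using $\de_s\de_\tau\chi=(\de_tf)^{\widetilde{}}\de_\tau\chi=:p\,\de_\tau\chi$ (an identity of $L^\infty_{loc}$ functions, by~\eqref{eq05121214}), this says that, as distributions on $\tilde\omega$,
\[
\de_s\!\bigl(\tilde w\,\de_\tau\chi\bigr)=p\,\tilde w\,\de_\tau\chi,\qquad \de_s\!\bigl(\de_\tau\chi\bigr)=p\,\de_\tau\chi .
\]
Hence, for a.e.\ $\tau$, the functions $s\mapsto\tilde w(s,\tau)\,\de_\tau\chi(s,\tau)$ and $s\mapsto\de_\tau\chi(s,\tau)$ are absolutely continuous solutions in $s$ of the same linear ODE $u'=p(\cdot,\tau)\,u$ with $p\in L^\infty_{loc}$; since $\de_\tau\chi$ is locally bounded away from $0$ (Theorem~\ref{thm05112336super}), their ratio $\tilde w$ is constant in $s$ on each interval of $\tilde\omega_{1,\tau}$. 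Thus $\tilde w\in W^{1,\infty}_{loc}(\tilde\omega)$ with $\de_s\tilde w=0$, and $\tilde v:=\widetilde{\grad^ff}=\tilde w/\sqrt{1-\tilde w^2}$ is likewise $s$-independent on such intervals.

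Now fix a rectangle $R=(s_1,s_2)\times(\tau_1,\tau_2)\subset\tilde\omega$ and $\hat s\in(s_1,s_2)$, and write $\tilde v(s,\tau)=a(\tau)$ on $R$. The curve $s\mapsto\chi(s,\tau)$ is $C^{1,1}$ (Theorem~\ref{existLpLip}) with $\de_s\chi=\tilde f$, and by the chain rule of Remark~\ref{chainlip}, $\de_s^2\chi=\de_s\tilde f=(\de_yf+f\de_tf)^{\widetilde{}}=\tilde v=a(\tau)$ a.e.\ in $s$; integrating twice from $\hat s$ gives~\eqref{eq05122152} for a.e.\ $\tau$ and all $s$, with $c(\tau):=\chi(\hat s,\tau)$ and $b(\tau):=\de_s\chi(\hat s,\tau)=f(\hat s,c(\tau))$. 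Evaluating this quadratic at three fixed values of $s$ exhibits $a,b,c$ as fixed linear combinations of $\tau\mapsto\chi(s_j,\tau)$, hence (after redefinition on a null set) locally Lipschitz in $\tau$, so~\eqref{eq05122152} then holds for all $(s,\tau)\in R$ by continuity. Here $c=\chi(\hat s,\cdot)$ is nondecreasing (by (L.2)) with inverse $\rho(\hat s,\cdot)$, hence locally biLipschitz onto its image; $b=f(\hat s,c(\cdot))$ is a composition of Lipschitz maps. Globally, $w=\tilde w\circ\Psi^{-1}$ is locally Lipschitz with $|w|<1$, so $\grad^ff=v$, a fixed smooth function of $w$, lies in $\Lip_{loc}(\omega)$; thus $f\in\Cow(\omega)$, and the continuous $\grad^ff$ is constant along every integral curve $s\mapsto\Psi(s,\tau)$ of $\grad^f$ (by the $s$-independence of $\tilde v$ on intervals of $\tilde\omega_{1,\tau}$), so $f$ is a weak Lagrangian solution of $\Delta^ff=0$; in particular $a(\tau)=\de_s^2\chi(s,\tau)=\grad^ff(\hat s,c(\tau))$ on $R$. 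Letting $\omega\uparrow\Omega$ gives the global statements, and applying Remark~\ref{rem05122131} with $\rho=c^{-1}$ (an increasing local biLipschitz homeomorphism onto its image) produces a parametrization with $c(\tau)=\tau$.

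The main obstacle is the middle step: carrying out the change of variables in $I_f$ and, above all, extracting $\de_s\tilde w=0$ from the resulting weak identity. The crucial structural point is that $\tilde w\,\de_\tau\chi$ and $\de_\tau\chi$ solve the \emph{same} linear first-order ODE in $s$ while $\de_\tau\chi$ never vanishes; but making every product, weak derivative and area-formula manipulation legitimate is exactly what forces the passage to $\omega\ssubset\Omega$, where $f\in\Lip(\omega)$ and the biLipschitz Lagrangian homeomorphism of Theorem~\ref{existLpLip} is available. Everything afterwards — integrating an a.e.\ identity in $s$, promoting it to all $(s,\tau)$ by continuity, and checking the regularity of $a$, $b$, $c$ — is routine bookkeeping.
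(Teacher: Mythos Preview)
Your proof is correct and reaches the same conclusion as the paper, but the central step---extracting $\de_s\tilde w=0$ from the transformed first-variation identity---goes by a genuinely different route. The paper (Lemma~\ref{lem05122135}) starts from the same identity $\int_{\tilde\omega}\tilde w\,(\de_s\tilde\varphi\,\de_\tau\chi+\de_\tau\tilde f\,\tilde\varphi)=0$ and then \emph{mollifies} $\chi$ so as to legitimise the test-function substitution $\tilde\varphi_\epsilon=\theta/\de_\tau\chi_\epsilon$; passing to the limit yields $\int_{\tilde\omega}\tilde w\,\de_s\theta=0$ for all $\theta\in\Lip_c(\tilde\omega)$, i.e.\ $\de_s\tilde w=0$ directly. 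You instead combine the same terms via the product rule into $\tilde w\,\de_s(\psi\,\de_\tau\chi)$, and then observe that $\tilde w\,\de_\tau\chi$ and $\de_\tau\chi$ are absolutely continuous solutions of the \emph{same} linear ODE $u'=p(\cdot,\tau)\,u$ in $s$ with $\de_\tau\chi$ bounded away from zero, forcing their ratio $\tilde w$ to be constant in $s$. Your argument is shorter and avoids the mollification machinery, at the cost of invoking a product rule for $W^{1,\infty}_{loc}$-in-$s$ functions and uniqueness for linear ODEs with bounded coefficients---both routine here since $\de_\tau\chi$, $\de_s\de_\tau\chi=\de_\tau\tilde f$, and $\tilde w$ all lie in $L^\infty_{loc}$. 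One minor slip: you assert $\tilde w\in W^{1,\infty}_{loc}(\tilde\omega)$ before any $\tau$-regularity has been established; at that stage you only have $\de_s\tilde w=0$. This is harmless, since you subsequently recover the local Lipschitz dependence of $a$ (hence of $\tilde v$ and $\tilde w$) on $\tau$ from the three-point interpolation of $\chi(s_j,\cdot)$, and nothing in between uses more than $\de_s\tilde w=0$. The remaining steps---integrating twice in $s$, reading off $a,b,c$, and the final reparametrisation via Remark~\ref{rem05122131}---match the paper's argument.
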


The proof is postponed after a lemma, which highlights a crucial step, that is, the change of variables in the integral~\eqref{eq06221238} via a Lagrangian homeomorphism for $\grad^f$.
Once we can make this step, the conclusion follows quite directly.

\begin{lemma}\label{lem05122135}
	Suppose that $f\in\Lip_{loc}(\omega)\cap L^\infty(\omega)$ satisfies~\eqref{eq06221238} on $\omega\subset\R^2$ open.
	Let $\Psi:\tilde\omega\to\omega$, $\Psi(s,\tau)=(s,\chi(s,\tau))$, be a locally biLipschitz Lagrangian homeomorphism associated with $\grad^f$.
	Then
	\begin{equation}\label{1stvfhcchi}
	\int_{\tilde\omega} \frac{\de_s^2\chi}{\sqrt{1+(\de_s^2\chi)^2}}\, \de_s\theta \, \dd\mathcal L^2 = 0\qquad\text{ for all }\theta\in \Lip_c(\tilde\omega).
	\end{equation}
\end{lemma}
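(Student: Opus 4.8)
The plan is to start from the extended first-variation identity $I_f(\varphi)=0$, which by Lemma~\ref{lem08261641} holds for all $\varphi\in\Lip_c(\omega)$, and to push it forward through the Lagrangian homeomorphism $\Psi$. Recall
\[
I_f(\varphi) = -\int_\omega \frac{\grad^ff}{\sqrt{1+(\grad^ff)^2}}\,\big(\grad^f\varphi + \de_tf\,\varphi\big)\dd\LL^2 .
\]
The first step is to choose the test function appropriately: given $\theta\in\Lip_c(\tilde\omega)$, set $\varphi := \tilde\theta^{-1}$ in the sense that $\tilde\varphi=\theta$, i.e.\ $\varphi=\theta\circ\Psi^{-1}$. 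Since $\Psi$ is locally biLipschitz and $\theta$ has compact support in $\tilde\omega$, this $\varphi$ lies in $\Lip_c(\omega)$, so it is an admissible test function in~\eqref{eq06221238}.

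The second step is to rewrite the integrand entirely in the $(s,\tau)$ variables using Theorem~\ref{thm05112336super}. Note $\grad^ff = \de_s^2\chi$ (from $\de_s\chi = \tilde f$ and $(\grad^f g)^{\widetilde{}}=\de_s\tilde g$ applied to $g=f$), hence the factor $\grad^ff/\sqrt{1+(\grad^ff)^2}$ transforms to $\de_s^2\chi/\sqrt{1+(\de_s^2\chi)^2}$. For the differential operator applied to $\varphi$, Theorem~\ref{thm05112336super} gives $(\grad^f\varphi)^{\widetilde{}}=\de_s\theta$ and $(\de_t f)^{\widetilde{}}=\de_\tau\tilde f/\de_\tau\chi=\de_s\de_\tau\chi/\de_\tau\chi$. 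Crucially, $\grad^f\varphi+\de_tf\,\varphi=-(\grad^f)^*\varphi$, and since $(\grad^f)^*$ is the formal adjoint, the cleanest route is to transform the whole integral at once: by the area formula (Lemma~\ref{lemafLp}) with $\eta$ equal to the integrand, $\dd y\,\dd t$ becomes $\de_\tau\chi\,\dd s\,\dd\tau$, and the combination $(\grad^f\varphi+\de_tf\,\varphi)^{\widetilde{}}\,\de_\tau\chi$ should simplify to $\de_s(\de_\tau\chi\cdot\theta)-(\text{stuff})$ — more efficiently, one recognizes that $(\grad^f\varphi+\de_tf\,\varphi)\dd\LL^2$ pushes forward to $\de_s\theta\,\dd s\,\dd\tau + \theta\,\de_s\de_\tau\chi\,\dd s\,\dd\tau=\de_s(\theta\,\de_\tau\chi)\,\dd s\,\dd\tau$ using $(\de_t\varphi)^{\widetilde{}}\de_\tau\chi=\de_\tau\theta$ is not needed; rather one uses $\grad^f\varphi\mapsto\de_s\theta$ and $\de_tf\,\varphi\,\de_\tau\chi\mapsto\theta\,\de_s\de_\tau\chi$. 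Since $\de_s^2\chi/\sqrt{1+(\de_s^2\chi)^2}$ depends (after the change of variables) on $(s,\tau)$ in a way that is constant in... no — one simply integrates by parts in $s$ on the $\theta\,\de_s\de_\tau\chi$ term, or better, one absorbs $\de_\tau\chi$ and uses that $\de_s^2\chi/\sqrt{1+(\de_s^2\chi)^2}$ times $\de_\tau\chi$ differentiated in $\tau$... The honest streamlined argument: after transforming, the identity becomes
\[
\int_{\tilde\omega}\frac{\de_s^2\chi}{\sqrt{1+(\de_s^2\chi)^2}}\,\Big(\de_s\theta + \theta\,\frac{\de_s\de_\tau\chi}{\de_\tau\chi}\Big)\,\de_\tau\chi\,\dd s\,\dd\tau = 0,
\]
and since $\de_\tau\chi>0$ is locally bounded away from $0$ and Lipschitz, one checks $\big(\de_s\theta + \theta\,\de_s\de_\tau\chi/\de_\tau\chi\big)\de_\tau\chi = \de_s(\theta\,\de_\tau\chi)$, reducing the identity to testing against $\de_s(\theta\,\de_\tau\chi)$. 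Replacing the test function $\theta$ by $\theta/\de_\tau\chi$ (still in $\Lip_c(\tilde\omega)$, since $\de_\tau\chi$ is Lipschitz and bounded away from zero on compacts) yields exactly~\eqref{1stvfhcchi}.

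The main obstacle is the justification of the change of variables at the level of the distributional/weak identity: Theorem~\ref{thm05112336super} and Remark~\ref{chainlip} give the pointwise chain rule a.e., and Lemma~\ref{lemafLp} gives the area formula for the Lagrangian homeomorphism, but one must make sure that all integrands are genuinely $L^1$ (or at least that the manipulations are legitimate), which is where the local biLipschitz hypothesis on $\Psi$—guaranteeing $\de_\tau\chi$ bounded and bounded away from zero on compacts, and $\tilde f=\de_s\chi\in\Lip_{loc}$—does the work. The final bookkeeping step, swapping $\theta\leftrightarrow\theta/\de_\tau\chi$ and verifying the resulting class membership, is routine given that $\Psi$ is locally biLipschitz.
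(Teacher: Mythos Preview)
Your reduction is exactly right up to the point where the transformed identity reads
\[
0=\int_{\tilde\omega}\frac{\de_s^2\chi}{\sqrt{1+(\de_s^2\chi)^2}}\,\de_s(\theta\,\de_\tau\chi)\,\dd s\,\dd\tau ,
\]
and you then propose to replace $\theta$ by $\theta/\de_\tau\chi$. The gap is in the sentence ``still in $\Lip_c(\tilde\omega)$, since $\de_\tau\chi$ is Lipschitz and bounded away from zero on compacts''. The hypothesis that $\Psi$ is locally biLipschitz tells you only that $\chi$ is locally Lipschitz, so $\de_\tau\chi\in L^\infty_{loc}$ and (via the inverse) is bounded below; it does \emph{not} tell you that $\de_\tau\chi$ is Lipschitz, or even continuous. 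Equation~\eqref{eq05121214} gives $\de_s\de_\tau\chi=\de_\tau\tilde f\in L^\infty_{loc}$, so $\de_\tau\chi$ is Lipschitz in the $s$-direction, but nothing controls $\de_\tau^2\chi$. Consequently $\theta/\de_\tau\chi$ is in general only bounded and measurable, not Lipschitz, and cannot be fed back into the identity (which was established only for Lipschitz test functions).

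The paper confronts precisely this obstacle: it explicitly remarks that ``$\de_\tau\chi$ does not need to be Lipschitz'' and therefore mollifies, setting $\chi_\epsilon=\chi*\rho_\epsilon$ so that $\de_\tau\chi_\epsilon$ is smooth and still satisfies $c\le\de_\tau\chi_\epsilon\le C$ on $\spt\theta$. Then $\tilde\varphi_\epsilon:=\theta/\de_\tau\chi_\epsilon$ is genuinely in $\Lip_c(\tilde\omega)$, one plugs $\varphi_\epsilon=\tilde\varphi_\epsilon\circ\Psi^{-1}$ into the transformed first-variation identity, and passes to the limit $\epsilon\to0$ by dominated convergence (using that $\de_\tau\chi_\epsilon\to\de_\tau\chi$ and $\de_s\de_\tau\chi_\epsilon\to\de_s\de_\tau\chi$ a.e.\ with uniform $L^\infty$ bounds). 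Your argument becomes correct once you insert this approximation step in place of the unjustified substitution.
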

\begin{proof}	
	By Lemma~\ref{lemafLp} and Theorem~\ref{thm05112336super}, 
	we can perform the change of variables $(y,t)=\Psi(s,\tau)=(s,\chi(s,\tau))$ in \eqref{eq06221238} to obtain
	\begin{equation}\label{chvarminsurfeq}
	\begin{split}
	0&=\int_{\omega} \frac{\grad^ff}{\sqrt{1+(\grad^ff)^2}} (\grad^f\varphi+\de_tf\,\varphi) \,d\mathcal L^2\\
	&=\int_{\tilde\omega} \frac{\partial_s^2\chi}{\sqrt{1+(\partial_s^2\chi)^2}}\left( \partial_s\tilde\vf\,\partial_\tau\chi+\partial_\tau\tilde f\,\tilde\vf\right) \,d\mathcal L^2 \\
	\end{split}
	\end{equation}
	for each $\varphi \in Lip_c(\omega)$. 
	
	Fix $\theta\in\Lip_c(\tilde\omega)$. 
	We would like to substitute $\tilde\varphi$ with $\frac{\theta}{\de_\tau\chi}$ in~\eqref{chvarminsurfeq}, but $\de_\tau\chi$ does not need to be Lipschitz.  
	Let $K:=\,\spt(\theta)$ and, if $\eps>\,0$, let
	\[
	K_\eps:=\left\{(s,\gt)\in\R^2:\,{\rm dist}((s,\gt),K)<\,\eps\right\}\,.
	\]
	Then $(K_\eps)_\eps$ is a family of bounded open sets containing $K$ and there exists $\eps_0>\,0$ such that $K_{\eps_0}\Subset\tilde\omega$. 
	Since $\Psi$ is locally biLipschitz, there are $C>c>0$ with 
	\begin{equation}\label{dertaichipos}
	C>\de_\tau\chi>c \text{ a.e.~in }  K_{\eps_0}\,,
	\end{equation}
	we can successfully apply an argument by smooth approximation.

	Let $\rho_\epsilon\in C^\infty_c(\R^2)$ be a family of mollifiers and define $\chi_\epsilon:=\chi*\rho_\epsilon \in C^\infty(K_\epsilon)$.
	By \eqref{dertaichipos} and the properties of convolution with mollifiers, we have the following facts for $\eps\in (0,\eps_0/2)$:
	\begin{enumerate}[label=(\roman*)]
	\item
	$c\le\de_\gt\chi_\epsilon\le C$ on $K$ and $\chi_\epsilon\to\chi$ a.e.~on $K$;
	\item
	$\grad\chi_\epsilon\to\grad\chi$ a.e.~ on $K$ and
	$\|\grad\chi_\epsilon\|_{L^\infty(K)} \le \|\grad\chi\|_{L^\infty(K_{\eps_0})}$;
	\item
	$\de_s\de_\tau\chi_\epsilon = (\de_s\de_\tau\chi)*\rho_\epsilon = (\de_\tau\tilde f)*\rho_\epsilon$, therefore 
	$\de_s\de_\tau\chi_\epsilon\to\de_s\de_\tau\chi$ a.e.~on $K$ and
	$\|\de_s\de_\tau\chi_\epsilon\|_{L^\infty(K)} \le \|\de_\tau\tilde f\|_{L^\infty(K_{\eps_0})}$.
	\end{enumerate}
	
	For every $\epsilon>0$ small enough, the function $(s,\tau)\mapsto \frac{\theta(s,\tau)}{\de_\tau\chi_\epsilon(s,\tau)}$ is well defined and belongs to $\Lip_c(\tilde\omega)$. 
	Since $\Psi$ is locally biLipschitz, there exists $\varphi_\epsilon\in\Lip(\omega)$ such that $\tilde\varphi_\epsilon=\frac{\theta}{\de_\tau\chi_\epsilon}$.
	Moreover, we have
	\[
	\partial_s\tilde\varphi_\epsilon \, \de_\tau\chi + \de_\tau \tilde f \, \tilde\varphi_\epsilon
	= \de_s\theta \frac{\de_\tau\chi}{\de_\tau\chi_\epsilon}
		+ \theta \frac{-\de_s\de_\tau\chi_\epsilon\,\de_\tau\chi + \de_\tau\de_s\chi\,\de_\tau\chi_\epsilon}{(\de_\tau\chi_\epsilon)^2}
	\]
	Since $\de_s^2\chi=\de_s\tilde f\in L^\infty_{loc}(\tilde\omega)$, then $ \frac{\partial_s^2\chi}{\sqrt{1+(\partial_s^2\chi)^2}} \in L^\infty_{loc}(\tilde\omega)$. 
	From the facts (i)--(iii) above and the Lebesgue Dominated Convergence Theorem, we obtain
	\begin{align*}
	0 &= \lim_{\epsilon\to0} \int_{\tilde\omega} \frac{\partial_s^2\chi}{\sqrt{1+(\partial_s^2\chi)^2}}\left( \partial_s\tilde\vf_\epsilon\,\partial_\tau\chi+\partial_\tau\tilde f\,\tilde\vf_\epsilon\right) \,d\mathcal L^2 \\
	&= \int_{\tilde\omega} \frac{\partial_s^2\chi}{\sqrt{1+(\partial_s^2\chi)^2}} \de_s\theta
	\,d\mathcal L^2 .
	\end{align*}
	We have so proven~\eqref{1stvfhcchi}.
\end{proof}

\begin{proof}[Proof of Theorem~\ref{thm05111605}]
	By Lemma~\ref{lem05122135}, $\chi$ satisfies~\eqref{1stvfhcchi}.
	Therefore, $\de_s^2\chi$ is a constant function in $s$, that is, for almost every $\tau\in(\tau_1,\tau_2)$ the function $s\mapsto\chi(s,\tau)$ is a polynomial of degree two.
	Thus, there are measurable functions $a,b,c:(\tau_1,\tau_2)\to\R$ such that~\eqref{eq05122152} holds.
	
	First, notice that $c(\tau) = \chi(\hat s,\tau)$ for a.e.~$\tau\in(\tau_1,\tau_2)$.
	Therefore, the map $c$ is a locally biLipschitz homeomorphism from $(\tau_1,\tau_2)$ onto its image in $\R$, with $c'>0$ almost everywhere.
	
	Second, since $f(\hat s,\chi(\hat s,\tau)) = \de_s\chi(\hat s,\tau) = b(\tau)$, the function $b$ is in fact locally Lipschitz.
	
	Third, if $\delta>0$ is such that $\hat s+\delta<s_2$, then we have
	\[
	\chi(\hat s+\delta,\tau) = a(\tau) \delta^2 + b(\tau) \delta + c(\tau)
	\]
	for a.e.~$\tau\in(\tau_1,\tau_2)$, and thus the function $a$ is also locally Lipschitz.
	Moreover, from Theorem~\ref{thm05112336super} we have $\grad^ff(s,\chi(s,\tau)) = \de_s\tilde f(s,\tau) = \de_s^2\chi(s,\tau)$ for a.e.~$(s,\tau)\in(s_1,s_2)\times(\tau_1,\tau_2)$.
	Since $\de_s^2\chi(s,\tau)=a(\tau)$, then we obtain $a(\tau)=\grad^ff(s,\chi(s,\tau))$ for a.e.~$(s,\tau)\in(s_1,s_2)\times(\tau_1,\tau_2)$.
	
	Finally, notice that $\grad^ff(y,t)=a(\tau(\Psi^{-1}(y,t)))$ is locally Lipschitz on $\Phi((s_1,s_2)\times(\tau_1,\tau_2))$.
\end{proof}

%NUOVO TESTO AGGIUNTO:

After Theorem~\ref{thm05111605}, we can improve the existence result of Theorem~\ref{existLpLip} for $f\in\Lip_{loc}(\R^2)$ that satisfies~\eqref{eq06221238}.

\begin{corollary}\label{cor09102030}
	Suppose that $f\in\Lip_{loc}(\R^2)$ satisfies~\eqref{eq06221238}.
	Then there exists a unique locally biLipschitz Lagrangian homeomorphism $\Psi:\R^2\to\R^2$, $\Psi(s,\tau)=(s,\chi(s,\tau))$, for $f$ such that $\chi(0,\tau)=\tau$ for all $\tau\in\R$.
	Moreover, $\chi$ is of the form
	\[
	\chi(s,\tau) = a(\tau) \frac{s^2}{2} + b(\tau) s + \tau ,
	\]
	where $a,b:\R\to\R$ are the locally Lipschitz functions $a(\tau)=\grad^ff(0,\tau)$ and $b(\tau)=f(0,\tau)$.
\end{corollary}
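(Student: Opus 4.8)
The plan is to combine Theorem~\ref{thm05111605} with the elementary fact that a real polynomial of degree at most two is globally defined and never blows up in finite time. First I would apply Theorem~\ref{thm05111605} with $\Omega=\R^2$ to get $\grad^ff\in\Lip_{loc}(\R^2)$; then $a(\tau):=\grad^ff(0,\tau)$ and $b(\tau):=f(0,\tau)$ are locally Lipschitz on $\R$, so that $\chi(s,\tau):=a(\tau)\frac{s^2}{2}+b(\tau)s+\tau$ and $\Psi(s,\tau):=(s,\chi(s,\tau))$ already satisfy $\Psi\in\Lip_{loc}(\R^2;\R^2)$. It remains to show that this $\Psi$ is the desired (unique, normalised) Lagrangian homeomorphism.

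The crux is the following claim: for every $(s_0,t_0)\in\R^2$ the maximal solution $\zeta$ of $\zeta'(s)=f(s,\zeta(s))$, $\zeta(s_0)=t_0$ (which exists and is unique since $f$ is locally Lipschitz) is defined on all of $\R$ and equals $s\mapsto\grad^ff(s_0,t_0)\frac{(s-s_0)^2}{2}+f(s_0,t_0)(s-s_0)+t_0$. To prove it I would first show that $\zeta$ is, near every point $\sigma$ of its maximal interval $I$, a polynomial of degree at most two: choosing a bounded open $\omega\ni(\sigma,\zeta(\sigma))$, Theorem~\ref{thm05111605} applied to $\omega$ with $\hat s=\sigma$ writes the associated $\chi_\omega$ as a degree-$\le2$ polynomial in its first variable, and the integral curve of $\grad^f$ through $(\sigma,\zeta(\sigma))$ is one of its slices; since that slice is a $C^1$ solution of $\zeta'=f(\cdot,\zeta)$ passing through $(\sigma,\zeta(\sigma))$, ODE uniqueness makes it agree with $\zeta$ near $\sigma$. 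Hence $\zeta''$ is locally constant, therefore constant on the interval $I$, so $\zeta$ is a single degree-$\le2$ polynomial on $I$; such a function stays bounded on bounded intervals, so the continuation theorem for ODEs forces $I=\R$. Reading off the coefficients from Theorem~\ref{thm05111605} at $\hat s=s_0$ gives $\zeta(s_0)=t_0$, $\zeta'(s_0)=f(s_0,t_0)$ and $\zeta''(s_0)=\grad^ff(s_0,t_0)$, which identifies $\zeta$.

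Granting the claim, the remaining points are routine. With $(s_0,t_0)=(0,\tau)$ the curve $s\mapsto\chi(s,\tau)$ is precisely the integral curve of $\grad^f$ through $(0,\tau)$, so $\chi$ has the asserted form and $\Psi$ satisfies (L.3) (with equality everywhere); property (L.1) follows by applying the claim at an arbitrary base point $(y,t)$, whose integral curve meets $\{s=0\}$; for (L.2) and injectivity, two integral curves with distinct values at $s=0$ never meet (uniqueness) and hence stay ordered by the intermediate value theorem, so $\tau\mapsto\chi(s,\tau)$ is strictly increasing for each $s$. For the biLipschitz property, the claim at base point $(y,t)$ yields $\Psi^{-1}(y,t)=(y,\rho(y,t))$ with $\rho(y,t)=\grad^ff(y,t)\frac{y^2}{2}-f(y,t)\,y+t$, which is locally Lipschitz because $f$ and $\grad^ff$ are; Invariance of Domain then upgrades $\Psi$ to a locally biLipschitz homeomorphism. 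Uniqueness is immediate: if $\Psi'(s,\tau)=(s,\chi'(s,\tau))$ is another such homeomorphism with $\chi'(0,\tau)=\tau$, then for each $\tau$ the functions $\chi(\cdot,\tau)$ and $\chi'(\cdot,\tau)$ both solve $\zeta'=f(\cdot,\zeta)$ with $\zeta(0)=\tau$, hence agree. I expect the main obstacle to be the claim itself — specifically, patching the local degree-$\le2$ descriptions coming from Theorem~\ref{thm05111605} into one global polynomial (via connectedness of $I$ and ODE uniqueness) and then using non-blow-up of such polynomials to pass from local to global existence of the integral curves.
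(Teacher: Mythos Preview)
Your argument is correct and takes a genuinely different route from the paper's. The paper defines $\Psi$ by the polynomial formula, observes (via Theorem~\ref{thm05111605} and Invariance of Domain) that it is a locally biLipschitz Lagrangian homeomorphism onto its image, and then obtains \emph{surjectivity} by invoking an external result, namely \cite[Lemma~3.5]{MR3753176}, which applies because $f\in\Cow(\R^2)$ is a weak Lagrangian solution of $\Delta^ff=0$. You instead prove surjectivity directly: by patching the local degree-$\le2$ descriptions from Theorem~\ref{thm05111605} along the maximal integral curve (using ODE uniqueness for locally Lipschitz $f$) you get that every integral curve of $\nabla^f$ is a single quadratic polynomial, hence cannot blow up, hence is globally defined; this immediately gives (L.1). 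Your approach is more elementary and self-contained, and it yields as a bonus the explicit inverse $\rho(y,t)=\grad^ff(y,t)\,\tfrac{y^2}{2}-f(y,t)\,y+t$, from which the biLipschitz property is read off without appealing to the Gr\"onwall-type estimates in Theorem~\ref{existLpLip}. The paper's route is shorter on the page but hides the global-existence step inside a citation; yours makes transparent exactly why~\eqref{eq06221238} is essential (cf.\ the remark after Corollary~\ref{cor09102030}).
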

\begin{proof}
	By Theorem~\ref{thm05111605} and by the Invariance of Domain Theorem, the function $\Psi$ in the corollary is a locally biLipschitz Lagrangian homeomorphism $\Psi:\R^2\to\Psi(\R^2)$.
	Again by Theorem~\ref{thm05111605}, $f$ belongs to $\Cow(\R^2)$ and it is a weak Lagrangian solution of $\Delta^ff=0$.
	Therefore, we can apply \cite[Lemma~3.5]{MR3753176} and obtain that $\Psi$ is indeed surjective.
\end{proof}
%\begin{proof}
%	For every $R>0$, let $\omega_R:=(-R,R)^2\subset\R^2$.
%	Then we have $f_R:=f|_{\omega_R}\in\Lip(\omega_R)\cap L^\infty(\omega_R)$.
%	By Theorem~\ref{thm05111605}, for every $R>0$ there is $\Psi_R:\tilde\omega_R\to\omega_R$, $\Psi_R(s,\tau)=(s,\chi_R(s,\tau))$, locally biLipschitz Lagrangian homeomorphism such that
%	\[
%	\chi_R(s,\tau) = a_R(\tau)\frac{s^2}{2} + b_R(\tau) s + c_R(\tau) .
%	\]
%	Up to composing $\Psi_R$ with $(s,\tau)\mapsto (s,c_R^{-1}(\tau))$, which is biLipschitz, we take $c_R(\tau)=\tau$.
%	Notice that $(0,0)\in int(\tilde\omega_R)$.
%	By the uniqueness of solutions to the ODE~\eqref{eq06220926}, for every $R,R'>0$ the maps $\Psi_R$ and $\Psi_{R'}$ coincides on $\tilde\omega_R\cap\tilde\omega_{R'}$.
%	Therefore, if $R'>R$, then $\tilde\omega_{R}\subset\tilde\omega_{R'}$ and $\Psi_{R}$ is the restriction of $\Psi_{R'}$ on $\tilde\omega_{R}$.
%	
%	So, let $\tilde\omega:=\bigcup_{R>0}\tilde\omega_R$ and $\Psi:\tilde\omega\to\R^2$ be the surjective Lagrangian homeomorphism with $\Psi|_{\tilde\omega_R}=\Psi_R$.
%	We claim that $\tilde\omega=\R^2$.
%	Indeed, on the one hand, we clearly have $\{0\}\times\R\subset\tilde\omega$.
%	On the other hand, for every $\tau\in\R$, if $\tau< R$, then the curve $s\mapsto(s,\chi_R(s,\tau))$ is indeed defined for all $s\in\R$.
%\end{proof}
\begin{remark}
	We want to stress that, in Corollary~\ref{cor09102030}, the condition~\eqref{eq06221238} is crucial.
	For instance, consider $f(y,t)=t^2$, which is locally Lipschitz on $\R^2$ but does not satisfy~\eqref{eq06221238}.
	The maximal integral curves of $\grad^f=\de_y+t^2\de_t$ are not defined on the whole line $\R$.
	Indeed, $\gamma(s)=(s,\frac{\tau_1}{1-\tau_1 (s-s_1)})$ is the solution to~\eqref{eq06220926} with such $f$ and it is not defined at $s=\frac{1+\tau_1s}{\tau_1}$.
\end{remark}

%%%%%%%%%%%%%%%%%%%%%%%%%%%%%%%%%%%%%%%%%%%%%%%%%%%%%%%%%%%%%%%
%%%%%%%%%%%%%%%%%%%%%%%%%%%%%%%%%%%%%%%%%%%%%%%%%%%%%%%%%%%%%%%
\section{Consequences of the second variation}\label{sec09022317}

If $f\in\Lip_{loc}(\omega)$ is a local area minimizer, then  the  second variation formula is non-negative,i.e., see~\cite{MR2455341}:
\begin{equation}\label{eq06221239b}\tag{$2^{nd}$VF}
	\forall \varphi\in C^\infty_c(\omega)
	\qquad\qquad
	II_f(\varphi)\ge\,0 .
\end{equation}
By Lemma~\ref{lem08261641}, the condition~\eqref{eq06221239b} can be extended to $\varphi\in\Lip_c(\omega)$.

We recall that there are plenty of examples of functions $f\in\Lip_{loc}(\omega)$, for suitable open sets $\omega$, that satisfy both conditions~\eqref{eq06221238} and~\eqref{eq06221239b}, as we wil see in Proposition \ref{prop06061521}, see also  \cite{MR2472175,MR2448649}.

%In fact, any function of class $C^2$ satisfying~\eqref{eq06221238} also satisfies~\eqref{eq06221239b} outside the set (referring to~\eqref{eq05122152})
%\[
%\{(s,\chi(s,\tau)): a(\tau) (s-\hat s) + b(\tau)=0 \} ,
%\]
%that is, outside the curve drawn by the vertices of the parables that rule the plane, 
%Indeed, it is possible to construct a local calibration for the graph $\Gamma_f$ outside this curve.

The aim of this section is to prove the following theorem, which is a restatement of Theorem~\ref{thm06110146}.

\begin{theorem}\label{thm05111656}
	Suppose that $f\in\Lip_{loc}(\R^2)$ satisfies~\eqref{eq06221238} and~\eqref{eq06221239b}.
	Then $\grad^ff$ is constant and thus the graph $\Gamma_f$ of $f$ is an intrinsic plane.
	
	More precisely, let $\Psi(s,\tau)=(s,\chi(s,\tau))$ be the only Lagrangian pa\-ra\-me\-tri\-za\-tion associated with $\grad^f$ such that $\chi(0,\tau)=\tau$ for all $\tau$, which exists by Corollary~\ref{cor09102030}.
	Then
	\[
	\chi(s,\tau) = a \frac{s^2}{2} + b s + \tau
	\]
	with $a,b\in\R$.
\end{theorem}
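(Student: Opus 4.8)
The plan is to push everything onto the single variable $\tau$. Since $f$ satisfies~\eqref{eq06221238}, Corollary~\ref{cor09102030} already supplies the normalised Lagrangian homeomorphism $\Psi(s,\tau)=(s,\chi(s,\tau))$ with
\[
\chi(s,\tau)=a(\tau)\frac{s^2}{2}+b(\tau)s+\tau,\qquad a(\tau)=\grad^ff(0,\tau),\quad b(\tau)=f(0,\tau),
\]
where $a,b$ are locally Lipschitz and $\Psi\colon\R^2\to\R^2$ is a locally biLipschitz homeomorphism; so the theorem reduces to showing that $a$ and $b$ are constant. First I would read off what the biLipschitz property alone forces on the polynomial $g:=\de_\tau\chi$, which equals $a'(\tau)\frac{s^2}{2}+b'(\tau)s+1$ for a.e.\ $\tau$ and every $s$. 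Since $\Psi^{-1}$ is locally Lipschitz, $g$ is bounded below by a positive constant on every compact set (cf.\ Theorem~\ref{thm05112336super}), hence for a.e.\ $\tau$ the quadratic $s\mapsto g(s,\tau)$ is strictly positive on all of~$\R$. Inspecting a quadratic that is positive on $\R$ gives, for a.e.\ $\tau$: $a'(\tau)\ge0$; $b'(\tau)=0$ whenever $a'(\tau)=0$; and $2a'(\tau)-b'(\tau)^2>0$ whenever $a'(\tau)>0$. In particular $2a'-b'^2\ge0$ a.e., with equality only where $a'=b'=0$.

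Next I would run the second variation~\eqref{eq06221239b} through the same change of variables used in Lemma~\ref{lem05122135}, now applied to $II_f$. Using the area formula (Lemma~\ref{lemafLp}) and the change-of-variable rules (Theorem~\ref{thm05112336super}) — in particular $(\grad^ff)^{\widetilde{}}=a(\tau)$ and $(\grad^f\varphi+\de_tf\,\varphi)^{\widetilde{}}=\frac1g\de_s(g\tilde\varphi)$ — one obtains, writing $\psi:=\tilde\varphi$,
\[
II_f(\varphi)=\int_{\R^2}\left[\frac{(\de_s(g\psi))^2}{g\,(1+a^2)^{3/2}}+\frac{a}{\sqrt{1+a^2}}\,\de_\tau(\psi^2)\right]\dd s\,\dd\tau .
\]
Since $g$ is a polynomial of degree $\le2$ in $s$ (so $\de_s^2 g=a'(\tau)$) and $\tau\mapsto a/\sqrt{1+a^2}$ is Lipschitz, I can integrate by parts in $s$ and in $\tau$; using $\frac{(\de_s g)^2}{g}-2a'=\frac{b'^2-2a'}{g}$ the expression collapses and~\eqref{eq06221239b} becomes
\[
0\le\int_{\R^2}\frac{1}{(1+a^2)^{3/2}}\left[g\,(\de_s\psi)^2+\frac{b'^2-2a'}{g}\,\psi^2\right]\dd s\,\dd\tau\qquad\text{for all }\psi\in\Lip_c(\R^2).
\]
Here it is essential that $\tilde\omega=\R^2$ (Corollary~\ref{cor09102030}) and that no step divides by $g$ on an unbounded set nor differentiates $g$ in $\tau$, since $g$ is only locally bounded (not Lipschitz) in $\tau$.

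Then I would slice in $\tau$: testing with $\psi=\eta(\tau)\zeta(s)$ and letting $\eta$ concentrate shows that for each fixed $\zeta\in\Lip_c(\R)$ the inequality
\[
\int_\R g(s,\tau)\,\zeta'(s)^2\,\dd s+\big(b'(\tau)^2-2a'(\tau)\big)\int_\R\frac{\zeta(s)^2}{g(s,\tau)}\,\dd s\ \ge\ 0
\]
holds for a.e.\ $\tau$; ranging over a countable dense family of $\zeta$ then yields this, for a.e.\ $\tau$, simultaneously for all $\zeta\in\Lip_c(\R)$. On a fibre with $a'(\tau)>0$ one has $g(\cdot,\tau)>0$ on $\R$, so $v(s):=\int_0^s g(\sigma,\tau)^{-1}\,\dd\sigma$ is an increasing homeomorphism of $\R$ onto an interval of length $L=\int_\R g(s,\tau)^{-1}\,\dd s$, and completing the square gives the explicit value $L=2\pi/\sqrt{2a'(\tau)-b'(\tau)^2}$. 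In the $v$-variable the inequality reads $\int_0^L(\de_v\zeta)^2\,\dd v+(b'(\tau)^2-2a'(\tau))\int_0^L\zeta^2\,\dd v\ge0$, which extends by density to all $\zeta\in W^{1,2}_0(0,L)$; testing against the first Dirichlet eigenfunction $\zeta=\sin(\pi v/L)$ gives $\pi^2/L^2\ge 2a'(\tau)-b'(\tau)^2$, i.e.\ $\frac14\big(2a'(\tau)-b'(\tau)^2\big)\ge 2a'(\tau)-b'(\tau)^2$, forcing $2a'(\tau)-b'(\tau)^2\le0$. This is incompatible with the first step on any positive-measure set where $a'>0$; hence $a'=0$ a.e., then $b'=0$ a.e., so $a$ and $b$ are constant, $\grad^ff\equiv a$, and $\Gamma_f$ is an intrinsic plane.

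The heart of the argument, and the step I expect to demand the most care, is this fibrewise reduction: the substitution $s\mapsto v$ converts the second variation along a single Lagrangian curve into a genuine one-dimensional Poincaré inequality on an interval whose length is exactly calibrated so that the optimal competitor $\sin(\pi v/L)$ forces $2a'-b'^2\le0$. The persistent technical nuisance is that $\de_\tau\chi$ is only locally bounded in $\tau$: every integration by parts, every admissible test function, and the slicing/density arguments must be arranged so as never to differentiate $\de_\tau\chi$ in $\tau$ nor to invert it on an unbounded set — and the sharp competitor $\sin(\pi v/L)$, which is not compactly supported in $s$, must itself be reached by $W^{1,2}_0$-approximation by images of $\Lip_c(\R)$ functions.
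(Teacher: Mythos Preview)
Your proof is correct and follows the same overall architecture as the paper: pass to the normalised Lagrangian coordinates from Corollary~\ref{cor09102030}, rewrite $II_f$ in $(s,\tau)$-variables exactly as in Lemma~\ref{lem04061131}, slice in $\tau$, and confront the resulting one-variable inequality with the a~priori constraint $2a'\ge b'^2$ (your positivity-of-the-quadratic argument is a cleaner substitute for the paper's Lemma~\ref{lem04061138}(2), which instead argues via the local boundedness of $\de_tf$).

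The one genuinely different ingredient is your treatment of the fibrewise step. The paper invokes Lemma~\ref{lem06070850} as a black box from~\cite{MR2333095}; you instead perform the change of variable $v=\int_0^s g(\sigma,\tau)^{-1}\,\dd\sigma$, which compresses $\R$ to an interval of length $L=2\pi/\sqrt{2a'-b'^2}$, turning the inequality into $\int_0^L(\de_v\zeta)^2\ge(2a'-b'^2)\int_0^L\zeta^2$ on $W^{1,2}_0(0,L)$, and then test against $\sin(\pi v/L)$. This is a self-contained and rather transparent proof of Lemma~\ref{lem06070850}, and it makes visible \emph{why} the lemma is sharp: the second variation along a single Lagrangian curve is exactly the Dirichlet form on an interval whose first eigenvalue is $\tfrac14(2a'-b'^2)$. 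Your care about not differentiating $\de_\tau\chi$ in $\tau$ and about reaching the non-compactly-supported optimiser by density is well placed and matches the regularity available.
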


We postpone the proof after a number of lemmas.
The overall strategy is the same as in~\cite{MR2333095}. On the other hand let us point out that we are not allowed to carry out the same calculations as in ~\cite{MR2333095} in computing the second variation formula. In fact, here function $f$ is supposed to be only locally Lipschitz continuous and not $C^2$. Thus we have to adapt the previous calculations. 

\begin{lemma}\label{lem04061138}
	Let $a,b\in\Lip_{loc}(\R)$, and define
	\[
	\chi(s,\tau) = \frac{a(\tau)}{2} s^2 + b(\tau) s + \tau .
	\]
	Assume that $\Psi:(s,\tau)\mapsto(s,\chi(s,\tau))$ is a Lagrangian parametrization for  $f\in\Lip_{loc}(\R^2)$.
	Then:
	\begin{enumerate}
	\item
	For all $\tau_1,\tau_2\in\R$, either $a(\tau_1)=a(\tau_2)$ and $b(\tau_1) = b(\tau_2)$,
 	or $2 \big(a(\tau_1)-a(\tau_2)\big) (\tau_1-\tau_2) > \big(b(\tau_1)-b(\tau_2)\big)^2 $;
	\item
	For almost every $\tau\in\R$ we have either $a'(\tau)= b'(\tau)=0$, or $2a'(\tau)> b'(\tau)^2$.
	\end{enumerate}
\end{lemma}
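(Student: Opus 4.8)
The statement is about a necessary condition imposed on the coefficient functions $a,b$ of the Lagrangian parametrization $\chi(s,\tau)=\frac{a(\tau)}{2}s^2+b(\tau)s+\tau$ by the mere fact that $\Psi(s,\tau)=(s,\chi(s,\tau))$ is a \emph{Lagrangian parametrization} — in particular, by property (L.2), that $\tau\mapsto\chi(s,\tau)$ is nondecreasing for each fixed $s$. The key observation is that (L.2) forces the integral curves $s\mapsto\Psi(s,\tau)$ to be pairwise disjoint (two distinct curves cannot cross, by uniqueness of ODE solutions, and being ordered they cannot even touch). So the plan is to translate this non-crossing / monotonicity condition into the claimed algebraic inequalities on the parabolas $s\mapsto\chi(s,\tau_i)$.

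\emph{Proof of (1).} Fix $\tau_1\neq\tau_2$, say $\tau_1<\tau_2$, and consider the difference
\[
g(s):=\chi(s,\tau_2)-\chi(s,\tau_1)=\frac{a(\tau_2)-a(\tau_1)}{2}s^2+\bigl(b(\tau_2)-b(\tau_1)\bigr)s+(\tau_2-\tau_1).
\]
By (L.2), $g(s)\ge 0$ for all $s\in\R$. I would distinguish two cases according to whether the leading coefficient $A:=a(\tau_2)-a(\tau_1)$ vanishes. If $A=0$, then $g$ is affine and nonnegative on all of $\R$, which forces $b(\tau_2)-b(\tau_1)=0$; together with $\tau_2-\tau_1>0$ this is consistent, and we are in the first alternative (note $a(\tau_1)=a(\tau_2)$, $b(\tau_1)=b(\tau_2)$). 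If $A\neq0$, then since $g(s)\ge0$ for all $s$ and $g\to+\infty$ would require $A>0$; moreover a nonnegative quadratic with positive leading coefficient has discriminant $\le 0$, i.e.
\[
\bigl(b(\tau_2)-b(\tau_1)\bigr)^2 - 2A(\tau_2-\tau_1)\le 0,\qquad\text{that is}\qquad 2\bigl(a(\tau_2)-a(\tau_1)\bigr)(\tau_2-\tau_1)\ge\bigl(b(\tau_2)-b(\tau_1)\bigr)^2.
\]
This gives the inequality with "$\ge$"; the point that makes it \emph{strict} — unless $a,b$ agree at the two points — is that the case of equality in the discriminant means $g$ has a (double) real root $s_0$, i.e.\ $\chi(s_0,\tau_1)=\chi(s_0,\tau_2)$, so the two integral curves of $\nabla^f$ through $(s_0,\chi(s_0,\tau_1))$ coincide by uniqueness; but then by (L.2) (monotonicity in $\tau$) they must coincide for \emph{every} $s$, forcing $a(\tau_1)=a(\tau_2)$ and $b(\tau_1)=b(\tau_2)$, contradicting $A\neq0$. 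Hence if $a(\tau_1)\ne a(\tau_2)$ the inequality is strict, which is exactly the dichotomy claimed. (One should double-check that $a(\tau_1)\neq a(\tau_2)$ together with the $\ge$-inequality already rules out the degenerate affine sub-case, so the two stated alternatives are indeed exhaustive and mutually exclusive.)

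\emph{Proof of (2).} This is the infinitesimal version of (1): divide the inequality $2\bigl(a(\tau_1)-a(\tau_2)\bigr)(\tau_1-\tau_2)\ge\bigl(b(\tau_1)-b(\tau_2)\bigr)^2$ by $(\tau_1-\tau_2)^2>0$ and let $\tau_2\to\tau_1$ along a point $\tau_1$ of differentiability of both $a$ and $b$ (a.e.\ $\tau_1$, since $a,b\in\Lip_{loc}$); the left side tends to $2a'(\tau_1)$ and the right side to $b'(\tau_1)^2$, giving $2a'(\tau)\ge b'(\tau)^2\ge0$ a.e. To upgrade this to the stated strict dichotomy, I would argue: at a point where $a'(\tau)=0$ we then get $b'(\tau)^2\le0$, hence $b'(\tau)=0$; and at a point where $a'(\tau)\neq0$ we have $a'(\tau)>0$ and must rule out $2a'(\tau)=b'(\tau)^2$. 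For the latter, one passes to the integrated statement: if equality held on a positive-measure set one could produce $\tau_1<\tau_2$ close together for which $g$ in the proof of (1) has a real root, forcing the two curves to coincide and hence (by monotonicity again) $a,b$ locally constant — contradicting $a'\ne0$ on a positive measure set. Alternatively, and more cleanly, one observes that $2a'=b'^2$ a.e.\ on an interval combined with the strict part of (1) is self-contradictory unless $a'\equiv b'\equiv 0$ there. I expect the \textbf{main obstacle} to be making this last "strict almost everywhere" step fully rigorous: the pointwise limit argument only yields "$\ge$", so one genuinely has to invoke the coincidence-of-integral-curves mechanism from part (1) (via (L.2) and ODE uniqueness for the Lipschitz field $\nabla^f$) to exclude the borderline case $2a'=b'^2$ on a set of positive measure. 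Everything else is elementary manipulation of nonnegative quadratics.
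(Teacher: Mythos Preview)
Your proof of part~(1) is correct; in fact the paper simply cites an external reference for this part, so you have supplied a self-contained argument where the paper does not. One small clean-up: once ODE uniqueness forces the two parabolas to coincide as functions of $s$, comparing constant terms also gives $\tau_1=\tau_2$, so the contradiction is immediate; the appeal to (L.2) at that point is unnecessary.

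For part~(2) there is a genuine gap. The weak inequality $2a'\ge b'^2$ does follow by passing to the limit, and the case $a'(\tau)=0\Rightarrow b'(\tau)=0$ is immediate. But \emph{both} of your proposed mechanisms for ruling out $2a'(\tau)=b'(\tau)^2>0$ on a set of positive measure fail. Consider $a(\tau)=\tfrac{2}{3}\tau^3$, $b(\tau)=\tau^2$: then $2a'(\tau)=4\tau^2=b'(\tau)^2$ for \emph{every} $\tau$, and yet a direct computation gives
\[
2\bigl(a(\tau_1)-a(\tau_2)\bigr)(\tau_1-\tau_2)-\bigl(b(\tau_1)-b(\tau_2)\bigr)^2=\tfrac{1}{3}(\tau_1-\tau_2)^4>0
\]
for all $\tau_1\neq\tau_2$. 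So the strict inequality of part~(1) holds everywhere, the difference $g$ never has a real root, and ``$2a'=b'^2$ on an interval combined with the strict part of~(1)'' is \emph{not} self-contradictory. Neither of your arguments can succeed because neither uses the hypothesis $f\in\Lip_{loc}(\R^2)$ beyond what is already encoded in part~(1).

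What kills the example above is precisely that hypothesis: one finds $\partial_\tau\chi(s,\tau)=(\tau s+1)^2$, vanishing at $s=-1/\tau$, so $\Psi$ is not biLipschitz and the corresponding $f$ is not locally Lipschitz. This is the mechanism the paper uses. At a point $\tau$ of differentiability with $2a'(\tau)=b'(\tau)^2>0$, the denominator $\partial_\tau\chi(s,\tau)=\tfrac{a'(\tau)}{2}s^2+b'(\tau)s+1=\bigl(\tfrac{b'(\tau)}{2}s+1\bigr)^2$ vanishes at $s_0=-2/b'(\tau)$, and the chain rule gives
\[
\partial_t f\bigl(s,\chi(s,\tau)\bigr)=\frac{a'(\tau)s+b'(\tau)}{\tfrac{a'(\tau)}{2}s^2+b'(\tau)s+1}=\frac{b'(\tau)}{\tfrac{b'(\tau)}{2}s+1},
\]
which is unbounded near $s_0$. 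Restricting to $|\tau|\le 2k$ and $|b'(\tau)|>1/k$ keeps $s_0\in[-2k,2k]$, so on the compact $[-2k,2k]^2$ one would have $\operatorname{ess\,sup}|\partial_t f|=\infty$, contradicting $f\in\Lip_{loc}(\R^2)$. A Fubini/measurability argument then shows the exceptional set has measure zero. You should replace your two sketched arguments with this computation; the ``coincidence-of-curves'' idea from part~(1) is the right intuition (curves become infinitesimally tangent), but it must be implemented through the blow-up of $\partial_t f$, not through an integrated touching.
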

\begin{proof}
	First of all, notice that, by the uniqueness of solutions to \eqref{eq06220926} for $f$ locally Lipschitz, the Lagrangian parametrization $\Psi$ here is the one constructed in Theorem~\ref{existLpLip}.
	In particular, this $\Psi$ is a locally biLipschitz homeomorphism.
	
	The first part of the lemma is contained in Lemma~3.2 of \cite{MR3753176}.
	Before proving the second part, notice that $2a'(\tau) \ge b'(\tau)^2$ follows directly from the inequality $2 \big(a(\tau_1)-a(\tau_2)\big) (\tau_1-\tau_2) \ge \big(b(\tau_1)-b(\tau_2)\big)^2 $, which holds for every $\tau_1,\tau_2\in\R$.
	Moreover, since $\Psi$ is locally biLipschitz, the function $f\circ\Psi$ is differentiable for almost every $(s,\tau)\in\R^2$.
	
	In order to show the second part of the lemma, we show that the sets
	\[
	E_k=\left\{ \tau\in\R : 
		\begin{array}{l} 
		f\circ\Psi\text{ is differentiable at $(s,\tau)$ for a.e.~}s , \\
		a,b\text{ are differentiable at $\tau$,}\\
		k^{-2} <2a'(\tau)=b'(\tau)^2 \text{ and }|\tau|\le2k
		\end{array}
	\right\} 
	\]
	have zero measure, for all $k\in\N$. Assume, by contradiction that $\cal L^1(E_k)>\,0$ for a given $k$.
	Notice that, if $\tau\in E_k$, then for almost every $s\in\R$
	\[
	\de_t f(s,\chi(s,\tau)) = \frac{ a'(\tau)s+b'(\tau) }{a'(\tau)s^2/2 + b'(\tau) s + 1} 
		=\frac1{sb'(\tau)/2+1} .
	\]
	The denominator of this expression vanishes at $s=-\frac{2}{b'(\tau)}\in[2k,2k]$.
	Therefore, for every $N\in\N$ and for every $\tau\in E_k$ there is $I_{k,N,\tau}\subset[-2k,2k]$ with $\cal L^1(I_{k,N,\tau})>0$, such that $|\de_t f(s,\chi(s,\tau))|\ge N$ for all $s\in I_{k,N,\tau}$. Let $B_{N,k}:=\bigcup_{\tau\in E_k} I_{k,N,\tau}\times\{\tau\}\subset[-2k,2k]^2$.  $B_{N,k}$ need not to be $\cal L^2$-measurable. However, since $\cal L^2$ is a  Borel outer measure, there exists a Borel set $B^*_{N,k}\subset [-2k,2k]^2$ 
	such that
	\[
	B_{N,k}\subset B^*_{N,k} \text{ and }\cal L^2(B^*_{N,k})=\,\cal L^2(B_{N,k})\,.
	\] 
	Let
	\[
	B^*_{N,k,\tau}:=\left\{s\in\R:\,(s,\tau)\in B^*_{N,k}\right\}\text{ if }\tau\in\R\,,
	\]
	then
	\[
	B^*_{N,k,\tau}\supset I_{k,N,\tau}\text{ for each }\tau\in E_k\,.
	\]
	By Fubini's theorem,
	\[
	\cal L^2(B^*_{N,k})=\,\int_\R\cal L^1(B^*_{N,k,\tau})\,d\tau\ge\,\int_{E_k}\cal L^1(B^*_{N,k,\tau})\,d\tau>\,0
	\]
	It follows that, for every $N\in\N$, 
	\[
	\mathrm{ess}\sup_{\Psi(B^*_{N,k})}|\de_t f|\ge N\,,
	\] 
	where  $\Psi(B^*_{N,k})\subset\R^2 $ is a Borel set  of $\cal L^2$-positive measure.
	This is a contradiction, because $f\in \Lip([-2k,2k]^2)$.
	
	We conclude that $\LL^1(E_k)=0$ for all $k\in\N$ and thus that (2) holds.
\end{proof}

\begin{lemma}\label{lem04061131}
	Let $a,b\in\Lip_{loc}(\R)$, and define
	\[
	\chi(s,\tau) = \frac{a(\tau)}{2} s^2 + b(\tau) s + \tau .
	\]
	Assume that $\Psi:(s,\tau)\mapsto(s,\chi(s,\tau))$ is a locally biLipschitz Lagrangian homeomorphism for  $f\in\Lip_{loc}(\R^2)$ that satisfies \eqref{eq06221239b}.
	Then, for all $\tilde\varphi\in\Lip_c(\R^2)$,
	\begin{equation*}%\label{eq06071139}
	\int_{\R^2} (\de_s\tilde\varphi)^2 \frac{a's^2/2 + b's + 1}{(1+a^2)^{3/2}} 
	- \tilde\varphi^2 \frac{2a'-b'^2}{(a's^2/2 + b's + 1)(1+a^2)^{3/2}}  
		\dd s\dd \tau
	\ge 0 ,
	\end{equation*}
	where $a$, $a'$ and $b'$ are functions of $\tau$, while $\tilde\varphi$ is a function of $(s,\tau)$.
\end{lemma}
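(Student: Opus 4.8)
The plan is to transplant the second variation inequality \eqref{eq06221239b} to the Lagrangian side through $\Psi$ and then simplify. Fix $\tilde\varphi\in\Lip_c(\R^2)$ and set $\varphi:=\tilde\varphi\circ\Psi^{-1}$; since $\Psi$ is a locally biLipschitz homeomorphism, $\varphi\in\Lip_c(\R^2)$, and one may choose a bounded open $\omega'\subset\R^2$ with $\spt(\varphi)\Subset\omega'$ on which $f$ is Lipschitz, so that the results of Section~\ref{sec09022311} apply to $f|_{\omega'}$ and to the restriction of $\Psi$ to $\Psi^{-1}(\omega')$. By Lemma~\ref{lem08261641}, the inequality \eqref{eq06221239b} holds for this $\varphi$, that is $II_f(\varphi)\ge0$. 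Note that, in contrast with Lemma~\ref{lem05122135}, no smoothing argument is needed, because the test function $\varphi$ we feed into $II_f$ is already Lipschitz with compact support.

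Next, change variables $(y,t)=\Psi(s,\tau)=(s,\chi(s,\tau))$ in the formula for $II_f(\varphi)$, using the area formula of Lemma~\ref{lemafLp} (which replaces $\dd y\dd t$ by $\de_\tau\chi\,\dd s\dd\tau$) and the transformation rules of Theorem~\ref{thm05112336super}. Write $D:=\de_\tau\chi=a's^2/2+b's+1$, which is positive almost everywhere because $\Psi$ is locally biLipschitz, and recall that $a,b,a',b'$ are functions of $\tau$. From the explicit form of $\chi$ and Theorems~\ref{thm05112336super} and~\ref{thm05111605} one gets $\tilde f=\de_s\chi=as+b$, hence $(\grad^ff)^{\widetilde{}}=\de_s^2\chi=a$, $(\grad^f\varphi)^{\widetilde{}}=\de_s\tilde\varphi$, $(\de_tf)^{\widetilde{}}=\de_s\de_\tau\chi/\de_\tau\chi=(a's+b')/D$ and $(\de_t(\varphi^2))^{\widetilde{}}=\de_\tau(\tilde\varphi^2)/D$. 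Substituting, $II_f(\varphi)\ge0$ becomes
\[
0\le\int_{\R^2}\left[\frac{1}{(1+a^2)^{3/2}}\left(\de_s\tilde\varphi+\frac{a's+b'}{D}\,\tilde\varphi\right)^{2}D+\frac{a}{\sqrt{1+a^2}}\,\de_\tau(\tilde\varphi^2)\right]\dd s\dd\tau .
\]

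It remains to expand the square and integrate by parts. Expanding, the pure-square terms give $\dfrac{(\de_s\tilde\varphi)^2 D}{(1+a^2)^{3/2}}+\dfrac{(a's+b')^2\tilde\varphi^2}{D(1+a^2)^{3/2}}$, while the mixed term equals $\dfrac{a's+b'}{(1+a^2)^{3/2}}\de_s(\tilde\varphi^2)$ and, integrated by parts in $s$ (the weight being affine in $s$), contributes $-\dfrac{a'\tilde\varphi^2}{(1+a^2)^{3/2}}$; similarly $\dfrac{a}{\sqrt{1+a^2}}\de_\tau(\tilde\varphi^2)$, integrated by parts in $\tau$ (the weight being locally Lipschitz in $\tau$, as $a\in\Lip_{loc}(\R)$, with a.e.\ derivative $\dfrac{a'}{(1+a^2)^{3/2}}$), contributes $-\dfrac{a'\tilde\varphi^2}{(1+a^2)^{3/2}}$; in both cases boundary terms vanish by compact support and Fubini's theorem applies since all integrands are bounded on $\spt(\tilde\varphi)$. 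Collecting, the coefficient of $(\de_s\tilde\varphi)^2$ is $D/(1+a^2)^{3/2}=(a's^2/2+b's+1)/(1+a^2)^{3/2}$, while that of $\tilde\varphi^2$ equals
\[
\frac{(a's+b')^2-2a'D}{D(1+a^2)^{3/2}}=\frac{b'^2-2a'}{D(1+a^2)^{3/2}}=-\frac{2a'-b'^2}{(a's^2/2+b's+1)(1+a^2)^{3/2}},
\]
the first equality being the algebraic identity $(a's+b')^2-2a'(a's^2/2+b's+1)=b'^2-2a'$, which is exactly what removes the $s$-dependence. This yields the asserted inequality. I expect the delicate point to be the change of variables at low regularity — verifying that each of $\grad^ff$, $\de_tf$, $\grad^f\varphi$ and $\de_t(\varphi^2)$ transforms under $\Psi$ precisely as given by Theorem~\ref{thm05112336super}, even though $f$ is merely Lipschitz so that $\de_tf\in L^\infty_{loc}$ only; the ensuing integrations by parts and the algebraic simplification are routine.
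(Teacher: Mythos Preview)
Your proof is correct and follows essentially the same route as the paper: define $\varphi=\tilde\varphi\circ\Psi^{-1}$, change variables in $II_f(\varphi)$ via Lemma~\ref{lemafLp} and Theorem~\ref{thm05112336super}, expand the square, integrate by parts in $s$ and in $\tau$, and use the identity $(a's+b')^2-2a'D=b'^2-2a'$. The paper performs the same manipulations but compresses the integration by parts into the displayed chain of equalities without comment; you have simply made those steps (and the justification via $a\in\Lip_{loc}$) explicit.
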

\begin{proof}
	Since the map $\Psi$ is a locally biLipschitz homeomorphism,
	given $\tilde\varphi\in\Lip_c(\R^2)$ we have $\varphi:=\tilde\varphi\circ\Psi^{-1}\in\Lip_c(\R^2)$ and $II_f(\varphi)\ge0$.
	Performing a change of variables via $\Psi$ using Theorem~\ref{thm05112336super}, we have:
	\begin{multline*}
	\!\!\!\!II_f(\varphi)\!
	=\!\!\! \int_{\R^2} \!\! \left(\frac{
		\left(\de_s\tilde\varphi+\tilde\varphi\frac{a's+b'}{a's^2/2 + b's + 1}\right)^2 }{(1+a^2)^{3/2}} + 
		\frac{
			\frac{\de_\tau(\tilde\varphi^2)}{a's^2/2 + b's + 1} a
			}{(1+a^2)^{1/2}}
		\right)\!\!
		(a's^2/2 + b's + 1)
		\dd s\text{d}\tau \\
	= \int_{\R^2} \bigg[ (\de_s\tilde\varphi)^2 \frac{a's^2/2 + b's + 1}{(1+a^2)^{3/2}}  
		+ \tilde\varphi^2 \frac{(a's+b')^2}{(a's^2/2 + b's + 1)(1+a^2)^{3/2}}  +\hfill\\ \hfill
		+ \de_s(\tilde\varphi^2) \frac{a's+b'}{(1+a^2)^{3/2}}
		+ \de_\tau(\tilde\varphi^2)\frac{a}{(1+a^2)^{1/2}} \bigg]\dd s\dd \tau \\
	= \int_{\R^2} \bigg[ (\de_s\tilde\varphi)^2 \frac{a's^2/2 + b's + 1}{(1+a^2)^{3/2}}  
		+ \tilde\varphi^2 \frac{(a's+b')^2}{(a's^2/2 + b's + 1)(1+a^2)^{3/2}}  + \hfill\\\hfill
		- \tilde\varphi^2 \frac{a'}{(1+a^2)^{3/2}}
		- \tilde\varphi^2 \frac{a'}{(1+a^2)^{3/2}} \bigg] \dd s\dd \tau \\
	= \int_{\R^2} \bigg[ (\de_s\tilde\varphi)^2 \frac{a's^2/2 + b's + 1}{(1+a^2)^{3/2}}  
		+ \tilde\varphi^2 \frac{b'^2-2a'}{(a's^2/2 + b's + 1)(1+a^2)^{3/2}}  
		\bigg]\dd s\dd \tau . \hfill
	\end{multline*}
\end{proof}

%%%%%%%%%%%%%%%%%%%%%%%%%%%%%%%%%%%%%%%%%%%%%%%%%%%%%%%%%%%%%%%
The following lemma is proven in \cite[p.45]{MR2333095}.
\begin{lemma}\label{lem06070850}
	Let $A,B\in\R$ be such that $B^2\le2A$ and set $h(t):=At^2/2 + Bt + 1$.
	If
	\begin{equation*}%\label{eq06061805}
	\int_{\R} \phi'(t)^2 h(t) \dd t \ge (2A-B^2) \int_{\R} \phi(t)^2 \frac1{h(t)} \dd t
	\qquad
	\forall \phi\in C^1_c(\R),
	\end{equation*}
	then $B^2=2A$.
\end{lemma}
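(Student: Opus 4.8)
\textbf{Proof plan for Lemma~\ref{lem06070850}.}
The plan is to argue by contradiction: assume $B^2<2A$ (the case we want to exclude) and construct a family of test functions $\phi\in C^1_c(\R)$ that violates the stated inequality. Write $\lambda:=2A-B^2>0$. Since $h(t)=At^2/2+Bt+1$ is a quadratic with positive leading coefficient $A>B^2/2\ge 0$ and discriminant $B^2-2A=-\lambda<0$, it has no real zeros, so $h(t)>0$ on all of $\R$, and $h(t)\sim (A/2)t^2$ as $|t|\to\infty$. The inequality to be contradicted is
\[
Q(\phi):=\int_\R \phi'(t)^2\,h(t)\dd t-\lambda\int_\R \phi(t)^2\,\frac{1}{h(t)}\dd t\ge 0
\qquad\forall\phi\in C^1_c(\R).
\]
The key observation is that the Euler--Lagrange equation of $Q$, namely $-(h\phi')'=\lambda\phi/h$, admits an explicit solution. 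Indeed one checks directly that $\phi_0(t):=\int_0^t \frac{1}{h(r)}\dd r$ satisfies $h\phi_0'=1$ hence $(h\phi_0')'=0$, which is \emph{not} quite the equation; instead the right substitution is suggested by the change of variable $u=\phi_0(t)$: since $A/2\cdot h=(h\phi_0')^2\cdot(\text{quadratic in }u)$ up to the relation $h\cdot h''=(h')^2-h''\cdot(\text{const})$... The cleanest route is: under the diffeomorphism $t\mapsto u=\phi_0(t)$ of $\R$ onto a bounded interval $(-\ell,\ell)$ (bounded because $\int_\R h^{-1}<\infty$), one has $\dd u=h^{-1}\dd t$, so $\int_\R \phi'^2 h\dd t=\int_{-\ell}^\ell (\dot\psi)^2\dd u$ and $\int_\R \phi^2 h^{-1}\dd t=\int_{-\ell}^\ell \psi^2\dd u$ where $\psi(u):=\phi(t(u))$. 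Hence $Q(\phi)=\int_{-\ell}^\ell \dot\psi^2\dd u-\lambda\int_{-\ell}^\ell\psi^2\dd u$, and $Q\ge 0$ for all compactly supported $\psi$ forces $\lambda\le \pi^2/(2\ell)^2$ (the first Dirichlet eigenvalue of $(-\ell,\ell)$). So the whole matter reduces to computing $\ell=\int_0^\infty h(r)^{-1}\dd r$ and checking that $\lambda=2A-B^2$ violates $\lambda\le\pi^2/(2\ell)^2$ whenever $\lambda>0$, i.e.\ that in fact $\lambda=\pi^2/(2\ell)^2$ exactly and the inequality in the hypothesis is saturated only in the limit, so strict positivity $\lambda>0$ is impossible.

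Carrying this out: complete the square, $h(r)=\frac{A}{2}\big((r+B/A)^2+\frac{2}{A}-\frac{B^2}{A^2}\big)=\frac{A}{2}\big((r+B/A)^2+\frac{\lambda}{A^2}\big)$, so with $\beta:=\sqrt{\lambda}/A>0$ we get $\int_{-\infty}^\infty h(r)^{-1}\dd r=\frac{2}{A}\cdot\frac{1}{\beta}\cdot\pi=\frac{2\pi}{\sqrt{\lambda}}$. But $\phi_0$ integrates only over a shifted copy of $\R$; the total length of the image interval of $\phi_0:\R\to\R$ is $\int_{-\infty}^\infty h^{-1}\dd r=2\pi/\sqrt\lambda$, i.e.\ $2\ell=2\pi/\sqrt\lambda$, so $\ell=\pi/\sqrt\lambda$. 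Then the Dirichlet eigenvalue bound reads $\lambda\le \pi^2/(2\ell)^2=\pi^2/(2\pi/\sqrt\lambda)^2=\lambda/4$, which is impossible for $\lambda>0$. This contradiction shows $\lambda\le 0$; combined with the hypothesis $B^2\le 2A$, i.e.\ $\lambda\ge 0$, we conclude $\lambda=0$, that is $B^2=2A$.

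The main obstacle I anticipate is purely technical: justifying the change of variables $t\mapsto u=\phi_0(t)$ and the transfer of the test-function class. One must check that $\phi_0$ is a $C^1$ (indeed $C^\infty$) increasing diffeomorphism from $\R$ onto the \emph{bounded} open interval $(-\pi/\sqrt\lambda,\,\pi/\sqrt\lambda)$ — boundedness is exactly where $\lambda>0$ enters — and that $\phi\in C^1_c(\R)$ corresponds, via $\psi=\phi\circ t(\cdot)$, to an arbitrary $C^1$ function on $(-\ell,\ell)$ with compact support in that open interval; conversely any smooth bump on $(-\ell,\ell)$ pulls back to an admissible $\phi$. Then one invokes the standard one-dimensional Poincaré/Wirtinger inequality: $\inf\{\int_{-\ell}^\ell\dot\psi^2:\ \psi\in C^1_c(-\ell,\ell),\ \int_{-\ell}^\ell\psi^2=1\}=\pi^2/(2\ell)^2$, with the infimum approached (not attained within $C^1_c$) by multiples of $\cos(\pi u/(2\ell))$ suitably cut off. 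Plugging a near-optimal $\psi$ into $Q$ gives $Q(\phi)<0$ as soon as $\lambda>\pi^2/(2\ell)^2$, and since we have computed $\pi^2/(2\ell)^2=\lambda/4<\lambda$, we are comfortably in that regime. No delicate estimates beyond these standard facts are needed; the rest is bookkeeping.
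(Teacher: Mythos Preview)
Your argument is correct. The paper itself does not supply a proof of this lemma; it simply cites \cite[p.~45]{MR2333095}. Your approach---change variables via $u=\int_0^t h(r)^{-1}\dd r$ to reduce the weighted inequality on $\R$ to the unweighted one $\int_I \dot\psi^2\dd u\ge \lambda\int_I\psi^2\dd u$ on a bounded interval $I$, then compare $\lambda$ with the first Dirichlet eigenvalue $\pi^2/|I|^2$---is clean and complete. The computation $|I|=\int_\R h^{-1}=2\pi/\sqrt\lambda$ gives $\pi^2/|I|^2=\lambda/4<\lambda$, and the contradiction follows.

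Two cosmetic remarks. First, the image interval of $u=\phi_0(t)$ is $(\phi_0(-\infty),\phi_0(+\infty))$, which is not symmetric about $0$ unless $B=0$; only its length matters for the Poincar\'e constant, and you use the length correctly, so this is purely notational. Second, the stream-of-consciousness paragraph around the Euler--Lagrange equation is a dead end and should be deleted in a final write-up; the argument begins cleanly at ``under the diffeomorphism $t\mapsto u=\phi_0(t)$''.
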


\begin{proof}[Proof of Theorem~\ref{thm05111656}]
	By Corollary~\ref{cor09102030}, 
	 $\chi(s,\tau)=a(\tau)\,s^2/2+b(\tau)\,s+\tau$ for some $a,b\in\Lip_{loc}(\R)$.
	By Lemma~\ref{lem04061131}, we have, for all $\tilde\varphi\in\Lip_c(\R^2)$,
	\[
	\int_{\R^2} (\de_s\tilde\varphi)^2 \frac{a's^2/2 + b's + 1}{(1+a^2)^{3/2}} 
	- \tilde\varphi^2 \frac{2a'-b'^2}{(a's^2/2 + b's + 1)(1+a^2)^{3/2}}  
		\dd s\dd \tau
	\ge 0 .
	\]
	By standard arguments (taking for example $\varphi(x,y):=\varphi_1(x)\varphi_2(y)$) we can infer that for almost every $\tau\in\R$ and all $\tilde\varphi\in\Lip_c(\R)$
	\[
%	\frac1{(1+a^2)^{3/2}} 
	\int_\R \tilde\varphi'(s)^2 (a's^2/2 + b's + 1) \dd s
	\ge
%	\frac1{(1+a^2)^{3/2}} 
	\int_\R \tilde\varphi^2 \frac{2a'-b'^2}{(a's^2/2 + b's + 1)}  \dd s ,
	\]
	where $a$, $a'$ and $b'$ are functions of $\tau$.
	By Lemma~\ref{lem04061138}, we have $b'^2\le2a'$ for almost every $\tau\in\R$.
	Thus, we can apply Lemma~\ref{lem06070850} and obtain that $b'(\tau)^2=2a'(\tau)$ for almost every $\tau\in\R$.
	By Lemma~\ref{lem04061138} again, we obtain $b'(\tau)=a'(\tau)=0$ for almost every $\tau\in\R$.
\end{proof}

%%%%%%%%%%%%%%%%%%%%%%%%%%%%%%%%%%%%%%%%%%%%%%%%%%%%%%%%%%%%%%%
%%%%%%%%%%%%%%%%%%%%%%%%%%%%%%%%%%%%%%%%%%%%%%%%%%%%%%%%%%%%%%%
\section{\texorpdfstring{$\Cow$}{C1W}-graphical strips}\label{sec09022318}
In this section we will study the functions appearing in Theorem \ref{thm05111605} with $b\equiv\,0$ and $\hat s=\,0$. 
Their intrinsic graph  has been called \emph{graphical strip} in \cite{MR2472175}, where they have been studied under $C^2$ regularity. % for~$f$.
This type of surface in $\HH$ has the shape of a helicoid: it contains the vertical axis $\{x=y=0\}$ and the intersection with $\{(x,y,z):(x,y)\in\R^2\}$ is a line for every $z\in\R$. Here we will study the case when $f$ could be less regular than $C^2$.

\begin{proposition}\label{prop06061521}
	If $a:\R\to\R$ is continuous and non-decreasing, then the map $(s,\tau)\mapsto(s,a(\tau)\frac{s^2}{2}+\tau)$ is a homeomorphism $\R^2\to\R^2$ and there is exactly one function $f\in\Cow(\R^2)$ such that for all $s\in\R$ and all $\tau\in\R$:
	\begin{equation}\label{eq06110006}
	f\left(s,a(\tau)\frac{s^2}{2}+\tau\right)=a(\tau)s .
	\end{equation}
	The function $f$ has the following properties:
	\begin{enumerate}[label=(\roman*)]
	\item 	$\grad^ff(s,a(\tau)\frac{s^2}{2}+\tau) = a(\tau)$;
%	\item 	$\grad^f\grad^ff=0$: in particular, for all $\in\Co^\infty_c(\R^2)$, the first variation of the area is zero, i.e., $I_f(\psi)=0$;
	\item 	$f$ is locally Lipschitz on $\R^2\setminus\{y=0\}$,
		and if $a\in\Lip_{loc}(\R)$, then $f$ is locally Lipschitz on $\R^2$;
	\item 	if $a\in\Lip_{loc}(\R)$, then~\eqref{eq06221238} holds; % $I_f(\psi)=0$ for all $\psi\in\Co^\infty_c(\R^2)$;
	\item 	if $a\in\Lip_{loc}(\R)$, then 
	\[
	II_{f}(\varphi) 
	= \int_{\R^2}
		(\de_s\tilde\varphi)^2 \frac{(\frac{a'}2s^2+1)}{(1+a^2)^{3/2}} 
		-2 \tilde\varphi^2 \frac{ a' }{ (1+a^2)^{3/2} (\frac{a'}2s^2+1) }
	\dd s\dd\tau ,
	\]
	where $a$ and $a'$ are functions in $\tau$ and $\tilde\varphi(s,\tau) := \varphi(s,a(\tau)\frac{s^2}{2}+\tau)$.
	\end{enumerate}
\end{proposition}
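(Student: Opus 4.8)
The plan is to establish the claims in the order listed, using throughout the explicit map $\Psi(s,\tau)=(s,\chi(s,\tau))$ with $\chi(s,\tau)=a(\tau)\tfrac{s^2}{2}+\tau$, and writing $\Psi^{-1}(y,t)=(y,\rho(y,t))$. For fixed $s$ the map $\tau\mapsto\chi(s,\tau)$ is continuous, \emph{strictly} increasing (the sum of a nondecreasing function of $\tau$ and of $\tau$ itself) and, since $a$ is nondecreasing, proper; hence $\Psi$ is a continuous bijection of $\R^2$, so by the Invariance of Domain Theorem a homeomorphism. I then set $f:=\tilde f\circ\Psi^{-1}$ with $\tilde f(s,\tau):=a(\tau)s$; this $f$ is continuous and, $\Psi$ being onto, it is the unique function satisfying~\eqref{eq06110006}. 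Moreover $\Psi$ is a Lagrangian parametrization for $f$ in the sense of Definition~\ref{Lagrpar}: (L.1) is surjectivity, (L.2) holds because $\chi(s,\cdot)$ is nondecreasing, and (L.3) because $s\mapsto\Psi(s,\tau)$ is smooth with $\de_s\chi(s,\tau)=a(\tau)s=f(\Psi(s,\tau))$. Finally, $\chi(y,\tau_2)-\chi(y,\tau_1)\ge\tau_2-\tau_1$ for $\tau_1<\tau_2$, so $\rho(y,\cdot)$ is nondecreasing and $1$-Lipschitz in $t$, for every $y$.

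The real work is item~(ii). On $\{y\ne0\}$ the function $\rho$ is locally Lipschitz: the $t$-direction is the last remark, and in the $y$-direction one compares $\tau=\rho(y,t)$ and $\tau'=\rho(y',t)$ for $y,y'$ on one side of $0$ in a compact set $R$, using $a(\tau)\tfrac{y^2}{2}+\tau=a(\tau')\tfrac{y'^2}{2}+\tau'$ and the monotonicity of $a$, which after distinguishing $\tau\gtrless\tau'$ yields $|\tau-\tau'|\le C_R|y-y'|$. Since $a$ need not be Lipschitz, local Lipschitz continuity of $f(y,t)=y\,a(\rho(y,t))$ does \emph{not} follow formally from that of $\rho$; instead one argues directly from the implicit identity $f(y,t)=y\,a\!\bigl(t-\tfrac{y f(y,t)}{2}\bigr)$. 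In the $t$-direction, monotonicity of $a$ forces $f(y,\cdot)$ to be monotone with $|f(y,t')-f(y,t)|\le\tfrac{2}{|y|}|t'-t|$. In the $y$-direction, fixing $t$ and comparing $f(y,t)$ with $f(y',t)$ through a case analysis on the relative order of $\rho(y,t)$ and $\rho(y',t)$ and on the signs of $f(y,t)$ and $f(y',t)$ (several branches being ruled out or forcing the slopes $f(y,t)/y$, $f(y',t)/y'$ to coincide), one gets $|f(y,t)-f(y',t)|\le C|y-y'|$ on compact subsets of $\{y\ne0\}$ — the extra factor $y$ in $f=y\,a(\rho)$ providing the cancellation that $a$ alone lacks. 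This is the step I expect to be the genuine obstacle; everything else is routine. If in addition $a\in\Lip_{loc}(\R)$, then $\chi$ (hence $\Psi$) is locally Lipschitz, and from $\tau-\tau'=a(\tau')\tfrac{y'^2-y^2}{2}+\bigl(a(\tau')-a(\tau)\bigr)\tfrac{y^2}{2}$, absorbing $\tfrac{L y^2}{2}|\tau-\tau'|$ for $|y|$ small, $\rho$ is locally Lipschitz near $\{y=0\}$ as well; thus $\Psi$ is locally biLipschitz and $f=\tilde f\circ\Psi^{-1}\in\Lip_{loc}(\R^2)$, since now $\tilde f(s,\tau)=a(\tau)s$ is in $\Lip_{loc}(\R^2)$.

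For~(i), by item~(ii) and Rademacher's theorem $f$ is differentiable a.e.\ on $\{y\ne0\}$; at such a point $(y_0,t_0)$, putting $\tau_0=\rho(y_0,t_0)$ and differentiating the identity $f(s,\chi(s,\tau_0))=a(\tau_0)s$ in $s$ at $s=y_0$, one gets $\grad^ff(y_0,t_0)=\de_yf+f\,\de_tf=\tfrac{\dd}{\dd s}\bigl(a(\tau_0)s\bigr)=a(\tau_0)$, using only differentiability of $f$ at the point, not of $a$. Hence $\grad^ff=a\circ\rho$ a.e.\ on $\{y\ne0\}$, which is~(i) (and extends continuously across $\{y=0\}$, where $\rho(0,\tau)=\tau$). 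To conclude $f\in\Cow(\R^2)$ one identifies the distribution $\de_yf+\de_t(f^2/2)$ on $\R^2$ with the continuous function $g:=a\circ\rho$: testing against $\psi\in C^\infty_c(\R^2)$ and integrating by parts separately on $\{y>\eps\}$ and $\{y<-\eps\}$ (where the a.e.\ identity already holds), the only interface contribution is $\int f(\eps,t)\psi(\eps,t)\,\dd t-\int f(-\eps,t)\psi(-\eps,t)\,\dd t$, which tends to $0$ as $\eps\to0$ because $f(0,\cdot)\equiv0$; so $\grad^ff=g\in C^0(\R^2)$ and $f\in\Cow(\R^2)$.

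For~(iii) and~(iv) assume $a\in\Lip_{loc}(\R)$, so that $f\in\Lip_{loc}(\R^2)$ and $\Psi$ is a locally biLipschitz Lagrangian homeomorphism for $f$ with $\chi(0,\tau)=\tau$. For~(iii), in the formula $I_f(\varphi)=-\int_{\R^2}\tfrac{\grad^ff}{\sqrt{1+(\grad^ff)^2}}\bigl(\grad^f\varphi+\de_tf\,\varphi\bigr)\dd\LL^2$ one rewrites $\grad^f\varphi+\de_tf\,\varphi=-(\grad^f)^*\varphi$ and uses the adjoint identity $\int u\,(\grad^f)^*\varphi\,\dd\LL^2=\int(\grad^fu)\,\varphi\,\dd\LL^2$ with $u=\tfrac{g}{\sqrt{1+g^2}}$: since $g\circ\Psi(s,\tau)=a(\tau)$ is constant along the integral curves of $\grad^f$, one has $\grad^fg=0$ a.e., hence $\grad^fu=0$ a.e.\ and $I_f(\varphi)=0$. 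For~(iv), one performs the change of variables $(y,t)=\Psi(s,\tau)$ in $II_f(\varphi)$ exactly as in the proof of Lemma~\ref{lem04061131} (via Lemma~\ref{lemafLp} and Theorem~\ref{thm05112336super}; that computation of the formula does not use~\eqref{eq06221239b}), specialized to $b\equiv0$ and $\hat s=0$, so that $\de_s^2\chi=a(\tau)$, $\tilde f=\de_s\chi=a(\tau)s$ and $\de_\tau\chi=\tfrac{a'}{2}s^2+1$: the integrations by parts in $s$ and in $\tau$ carried out there then reduce $II_f(\varphi)$ to the asserted expression, with $\tilde\varphi(s,\tau)=\varphi\bigl(s,a(\tau)\tfrac{s^2}{2}+\tau\bigr)$.
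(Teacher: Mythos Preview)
Your proof is correct in substance and matches the paper's approach for items~(iii) and~(iv) (the paper also just invokes the computation of Lemma~\ref{lem04061131} for~(iv), and your adjoint-based phrasing of~(iii) is equivalent to the paper's direct computation). Where the paper cites~\cite{MR3753176} for the homeomorphism property and for $f\in\Cow(\R^2)$ with~(i), you supply direct arguments; your integration-by-parts passage across $\{y=0\}$ is a legitimate way to identify $\grad^ff$ globally.

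The one place where your route diverges in content is item~(ii). Your sketch for the $y$-direction of $f$ on $\{y\ne0\}$ (``case analysis on the relative order of $\rho(y,t)$ and $\rho(y',t)$ and on the signs of $f(y,t)$ and $f(y',t)$'') can be made to work, but it is left vague and the branching really is nontrivial, since the sign of $\rho(y,t)-\rho(y',t)$ depends on the sign of $a$. The paper avoids all of this with a single triangle-inequality trick: given $(y,t)$ and $(y',t')$ with $yy'>0$ and corresponding parameters $\tau,\tau'$, introduce the intermediate point $(y,t'')$ with the \emph{same} $\tau'$, i.e.\ $t''=a(\tau')\tfrac{y^2}{2}+\tau'$. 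Then $|f(y',t')-f(y,t'')|=|a(\tau')|\,|y'-y|$, while $|f(y,t'')-f(y,t)|\le\tfrac{2}{|y|}|t''-t|$ by your own $t$-direction estimate, and $|t''-t'|=\tfrac{|a(\tau')|}{2}|y^2-y'^2|$. This yields $|f(y',t')-f(y,t)|\le|a(\tau')|\bigl(1+\tfrac{|y+y'|}{|y|}\bigr)|y'-y|+\tfrac{2}{|y|}|t'-t|$ directly, with no cases. When $a\in\Lip_{loc}$, the same display improves (via $\tfrac{|\tau'-\tau|}{|a(\tau')-a(\tau)|}\ge L^{-1}$) to a bound that is finite at $y=0$, which is simpler than your absorption argument for $\rho$ near $\{y=0\}$ (though yours is also correct).
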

\begin{proof}
	By \cite[Lemma~3.3]{MR3753176}, the map $(s,\tau)\mapsto(s,a(\tau)\frac{s^2}{2}+\tau)$ is a homeomorphism $\R^2\to\R^2$.
	By \cite[Remark~3.4]{MR3753176}, there is a unique function $f\in\Cow$ such that~\eqref{eq06110006} and $(i)$ hold.
	
	Next, we show $(ii)$. % that $f$ is locally Lipschitz on $\{( y, t): y\neq0\}$.
%	First, fix $ y>k$ and $ t'> t$.
%	
%	Let $\tau'>\tau$ be such that $ t=a(\tau)\frac{ y^2}{2}+\tau$ and similarly for $ t'$.
%	If $a(\tau')=a(\tau)$, then $f( y, t)=f( y, t')$.
%	If $a(\tau')>a(\tau)$, 
	Let $ y, y', t, t', t'',\tau,\tau'\in\R$ be such that $ y \cdot y'>0$ and
	\begin{align*}
	( y, t) &=  ( y,\frac12a(\tau) y^2+\tau) , \\
	( y', t') &=  ( y',\frac12a(\tau') y'^2+\tau') , \\
	( y, t'') &= ( y,\frac12a(\tau') y^2+\tau') .
	\end{align*}
	Observe first that, if $t''\neq t$, then $\tau'\neq\tau$. Thus, if $a(\tau')=\,a(\tau)$, we can infer that
	\[
	f(y,t'')-f(y,t)=\,(a(\tau')-a(\tau))y=\,0\,.
	\]
	Otherwise
	\begin{equation}\label{eq07121641}
	\begin{split}
	\left|\frac{f( y, t'')-f( y, t)}{ t''- t}\right|
	&= \left|\frac{(a(\tau')-a(\tau))  y}{\frac12 (a(\tau')-a(\tau)) y^2 + (\tau'-\tau) } \right| \\ 
	&= \left| \frac{1}{ \frac y2 + \frac1 y \frac{\tau'-\tau}{a(\tau')-a(\tau)}} \right|
	\le \frac2{| y|} ,
	\end{split}
	\end{equation}
	because $\frac{\tau'-\tau}{a(\tau')-a(\tau)}>\,0$.
	Second, we estimate
	\begin{equation}\label{eq07121641b}
	\begin{split}
	|f( y', t')-f( y, t)|
	&\le |f( y', t')-f( y, t'')|+|f( y, t'')-f( y, t)| \\
	&\le  |a(\tau')| | y'- y| + \frac{2}{| y|} | t''- t| \\
	&\le |a(\tau')| | y'- y| + \frac{2}{| y|} (| t''- t'|+| t'- t|) \\
	&\le |a(\tau')| | y'- y| + \frac{1}{| y|} |a(\tau')| | y^2- y'^2| + \frac{2}{| y|} | t'- t| \\
	&= |a(\tau')| \left(1+ \frac{ | y+ y'|}{| y|}  \right) | y'- y| + \frac{2}{| y|} | t'- t| .
	\end{split}
	\end{equation}
	This shows that $f$ is locally Lipschitz on $\{( y, t): y\neq0\}$.
	If $a$ is locally Lipschitz and $I\subset\R$ is a bounded interval, then for $\tau,\tau'\in I$ we have $\frac{\tau'-\tau}{a(\tau')-a(\tau)}\ge \frac1L$ for some $L>0$ depending on $I$.
	Thus, we obtain in~\eqref{eq07121641}
	\[
	\left|\frac{f( y, t'')-f( y, t)}{ t''- t}\right| \le L|y| .
	\]
	The estimate~\eqref{eq07121641b} is then
	\[
	|f( y', t')-f( y, t)|
	\le |a(\tau')| \left(1+ L\frac{ | y+ y'|}{2} |y| \right) | y'- y| + L|y| | t'- t| ,
	\]
	which shows that $f\in\Lip_{loc}(\R^2)$.

	Let's prove $(iii)$. %I want to give a proof of the third point.
	Assume that $a$ is locally Lipschitz.
	From $(ii)$ we know that $f$ is locally Lipschitz, and thus the Lagrangian parametrization $\Psi(s,\tau)=(s,\chi(s,\tau))$ with $\chi(s,\tau)=a(\tau)s^2/2+\tau$ is a biLipschitz homeomorphism by Theorem~\ref{existLpLip}.
	Performing a change of variables as in Lemma~\ref{lemafLp} and Theorem~\ref{thm05112336super}, we obtain for all $\varphi\in C^\infty_c(\R^2)$
	\begin{align*}
	I_f(\varphi) 
	&= \int_{\R^2} \frac{a(\tau)}{\sqrt{1+a(\tau)^2}} 
		\left( \de_s\tilde\varphi(s,\tau) +  \frac{ \tilde\varphi(s,\tau) a'(\tau)s }{ a'(\tau)s^2/2+1 } \right)
		(a'(\tau)s^2/2+1) \dd s\dd\tau \\
	&= \int_{\R^2} \frac{a(\tau)}{\sqrt{1+a(\tau)^2}} 
		\frac{\de}{\de s}\left( \tilde\varphi(s,\tau) \, (a'(\tau) s^2/2 + 1)  \right)
		 \dd s\dd\tau
	= 0 .
	\end{align*}

	Finally, part $(iv)$ has already been proven in Lemma~\ref{lem04061131}, because by $(ii)$ the function $f$ is locally Lipschitz when $a\in\Lip_{loc}(\R)$.
\end{proof}
\begin{remark}
	Notice that, if there exists $\tau\in\R$ such that $\lim_{\tau'\to\tau}\frac{a(\tau')-a(\tau)}{\tau'-\tau}=\infty$, then, from~\eqref{eq07121641}, we get, for each $y\neq 0$, $\lim_{t''\to t}\left|\frac{f( y, t'')-f( y, t)}{ t'- t}\right| = \frac{2}{|y|}$, and therefore $f$ is not locally Lipschitz on $\R^2$. An example of such phenomenon is the one in Section~\ref{sec09011226}.
\end{remark}

%The intrinsic graph of a $f:\R^2\setminus\{x=y=0\}\to\R$ 

In our coordinates $(x,y,z)$ for $\HH$, the intrinsic graph of functions as in Proposition~\ref{prop06061521} have the shape of helicoids:
%The intrinsic graph of $f$,
\[
\Gamma_f = \{(0,0,\tau)+s(a(\tau),1,0) : (s,\tau)\in\R^2\} .
\]
%is a local area minimizer outside the vertical axis, i.e., for every $p\in\Gamma_f\setminus\{x=y=0\}$ there is $\Omega\subset\bb H$ open such that $\Gamma_f$ is area minimizer in $\Omega$.
%In the following proof we will give an explicit parametrization.
Moreover, we have the following result for the horizontal vector field $\Omega:=\bb H\setminus\{x=y=0\}$
\begin{equation}\label{eq07121716}
\nu(x,y,z) := -\frac{y}{\sqrt{x^2+y^2}} X|_{(x,y,z)} + \frac{x}{\sqrt{x^2+y^2}} Y|_{(x,y,z)} .
\end{equation}
%By direct computation one easily shows that $\div(\nu)=0$.

\begin{proposition}\label{prop06211845}
	The vector field $\nu$ is divergence free in $\Omega=\bb H\setminus\{x=y=0\}$,
%	 and it coincides with the unit normal to the intrinsic graph $\Gamma_f$ for any $f$ as in Proposition~\ref{prop06061521}.
%	In particular, 
	and it is a local calibration for the intrinsic graph $\Gamma_f$ for any $f$ as in Proposition~\ref{prop06061521}.
%	 all  graphs outside the vertical axis $\{x=y=0\}$.
	
	As  a consequence, $\Gamma_f$ is a local area minimizer outside the vertical axis, i.e., for every $p\in\Gamma_f\setminus\{x=y=0\}$ there is $U\subset\bb H$ open such that $\Gamma_f$ is area minimizer in $U$.
\end{proposition}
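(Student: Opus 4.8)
The plan is to verify that $\nu$ is a \emph{local calibration} for $\Gamma_f$ outside the vertical axis and then invoke the standard calibration argument to conclude local area minimality. Recall that a divergence-free horizontal vector field $\nu$ of unit length is a calibration for an intrinsic graph $G_f$ provided that $\nu$ coincides with the horizontal unit normal of $\Gamma_f$ along $\Gamma_f$ itself; for any competitor $F$ that differs from $G_f$ in a compact set $K\subset U$, applying the divergence theorem to the open bounded set $(G_f\setminus F)\cup(F\setminus G_f)$ and using $\|\nu\|\le1$ together with $\operatorname{div}\nu=0$ gives $P_{sR}(G_f;U)=\int_{\partial^*G_f\cap U}\langle\nu,\nu_{G_f}\rangle\,d\mu\le P_{sR}(F;U)$. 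So the whole content is: (a) $\operatorname{div}\nu=0$ on $\Omega$, and (b) $\nu$ restricted to $\Gamma_f$ is the horizontal normal of $\Gamma_f$.

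First I would compute $\operatorname{div}\nu$ in coordinates. Writing $r=\sqrt{x^2+y^2}$ and using $X=\de_x-\frac y2\de_z$, $Y=\de_y+\frac x2\de_z$, one has
\[
\operatorname{div}\nu = X\!\left(-\frac yr\right) + Y\!\left(\frac xr\right)
= \de_x\!\left(-\frac yr\right) + \de_y\!\left(\frac xr\right),
\]
since the $\de_z$-components cancel because the coefficients do not depend on $z$. A direct computation gives $\de_x(-y/r) = xy/r^3$ and $\de_y(x/r) = -xy/r^3$, so $\operatorname{div}\nu=0$ on $\Omega=\bb H\setminus\{x=y=0\}$, which is exactly where $r>0$.

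Next I would identify the horizontal normal of $\Gamma_f$. From Proposition~\ref{prop06061521} and the description $\Gamma_f=\{(0,0,\tau)+s(a(\tau),1,0):(s,\tau)\in\R^2\}$, a point of $\Gamma_f$ over $(s,\tau)$ has coordinates $(x,y,z)=(a(\tau)s,s,\tau)$ (in the helicoid parametrization; one should double-check against the intrinsic-graph formula $(f(y,t),y,t-\tfrac12yf(y,t))$, which is why the parametrization is stated in this form). At such a point $x/y=a(\tau)=\grad^ff$ by property (i). The horizontal normal to an $X$-intrinsic graph $\{x=f(y,t)\}$ is classically $\nu_{G_f}=\frac{1}{\sqrt{1+(\grad^ff)^2}}(X - \grad^ff\,Y)$ up to sign, or equivalently, after the standard manipulation, it is proportional to $-y\,X + x\,Y$ along the graph — precisely because $x=f(y,t)$ makes the coefficient ratio $-y:x$ match $-1:\grad^ff$ once one accounts for the $z$-shift. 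Comparing with \eqref{eq07121716}, $\nu=\frac{1}{r}(-y\,X+x\,Y)$ is exactly this unit horizontal normal along $\Gamma_f$ (with the orientation fixed so that it points out of $G_f$). Since $f$ is locally Lipschitz away from $\{y=0\}$ and $C^1$-intrinsic, $\Gamma_f$ has locally finite perimeter and the reduced boundary computation is legitimate on any $U$ avoiding the axis.

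Finally, the conclusion: fix $p\in\Gamma_f\setminus\{x=y=0\}$ and choose $U$ a bounded neighborhood of $p$ with $\bar U\subset\Omega$. For any measurable competitor $F$ with $(G_f\setminus F)\cup(F\setminus G_f)\Subset U$, set $E$ to be this symmetric difference; applying the Gauss--Green formula for sets of finite perimeter to $\one_{G_f}-\one_F$ against the divergence-free field $\nu$ over $U$, the boundary term on $\partial^* G_f$ contributes $P_{sR}(G_f;U)$ (because $\langle\nu,\nu_{G_f}\rangle\equiv1$ there by the previous step), while the term on $\partial^*F$ is bounded by $P_{sR}(F;U)$ since $|\langle\nu,\nu_F\rangle|\le\|\nu\|=1$, and there is no contribution away from $U$. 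Hence $P_{sR}(G_f;U)\le P_{sR}(F;U)$, i.e., $G_f$ is perimeter minimizing in $U$, which is the claimed statement.

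\medskip

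The main obstacle is the second step: making precise that $\nu$ as defined on the \emph{ambient} open set $\Omega$ actually restricts to the measure-theoretic horizontal unit normal of $\Gamma_f$ at $\mu_{G_f}$-a.e.\ point, with the correct orientation. This requires knowing the formula for the horizontal normal of a locally Lipschitz (indeed $\Cow$) intrinsic graph and checking the sign convention consistently with the definition of $G_f$; once $f\in\Cow$ this is standard but must be stated carefully. The divergence computation and the calibration inequality itself are then routine.
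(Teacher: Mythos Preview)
Your proposal is correct and follows essentially the same approach as the paper's proof: compute $\operatorname{div}\nu=0$ directly, verify that $\nu$ restricted to $\Gamma_f$ agrees with the horizontal unit normal $\nu_{G_f}$, and then invoke the standard calibration argument. The paper is more terse on the last step (it simply cites \cite[Theorem~2.3]{MR2333095}), while you spell out the Gauss--Green inequality; conversely, for the normal identification the paper writes out explicitly $\nu_{G_f}(G(s,\tau))=-\frac{1}{\sqrt{1+a(\tau)^2}}X+\frac{a(\tau)}{\sqrt{1+a(\tau)^2}}Y$ using the parametrization $G(s,\tau)=(0,0,\tau)+s(a(\tau),1,0)$ and then checks this coincides with $\nu(G(s,\tau))$, which is exactly the verification you flag as the ``main obstacle.''
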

\begin{proof}%[Proof of Proposition~\ref{prop06211845}]
	It is clear that the distributional divergence of $\nu$ in $\Omega$ is 
	\[
	\div\nu = - X\left(\frac{y}{\sqrt{x^2+y^2}}\right) + Y\left(\frac{x}{\sqrt{x^2+y^2}}\right)
	= 0 .
	\]
	Next, let $G_f$ be the subgraph of $f$, i.e., $G_f=\{(0,y,t)(\xi,0,0):\xi\le f(y,t)\}$.
	It is well known (see~\cite[Theorem~1.2]{MR2223801}) that $G_f$ is a set of locally finite perimeter and that its reduced boundary is the intrinsic graph $\Gamma_f$.
	We describe $\Gamma_f$ as  image of the map $G:\R^2\to\R^3$, 
	$G(s,\tau)=(0,s,\chi(s,\tau))(f(s,\chi(s,\tau)) , 0,0)$.
	Since $\chi(s,\tau)=a(\tau)s^2/2+\tau$, then $G(s,\tau) = (0,0,\tau)+s(a(\tau),1,0)$.
%%	so that the image of $G$ is $G(\R^2)= \Gamma_f$.
	Hence, its unit normal is
	\[
	\nu_{G_f}(G(s,\tau)) = -\frac{1}{\sqrt{1+a(\tau)^2}} X|_{G(s,\tau)} + \frac{a(\tau)}{\sqrt{1+a(\tau)^2}} Y|_{G(s,\tau)} .
	% \frac{1}{\sqrt{1+(\grad^ff)^2}} X + \frac{\grad^ff}{\sqrt{1+(\grad^ff)^2}} Y .
	\]
	By a direct computation, one easily shows that $\nu_{G_f}(G(s,\tau)) = \nu(G(s,\tau))$.
%
%	.\\
%	Next, suppose that the function $a$ in~\eqref{eq06110006} is smooth and	
%	define $G(s,\tau):=(0,s,\chi(s,\tau))(f(s,\chi(s,\tau)) , 0,0)$, where $\chi(s,\tau)=a(\zeta)t^2/2+\zeta$, so that the image of $G$ is $G(\R^2)= \Gamma_f$.
%	Then, one easily computes that $G(s,\tau) = (0,0,\tau)+s(a(\tau),1,0)$.
%	The curves $s\mapsto G(s,\tau)$ are horizontal, because $\frac{\de a}{\de s}(s,\tau) = a(\tau) X|_{G(s,\tau)} + Y|_{G(s,\tau)}$.
%	So, 
%	 the horizontal normal to $\Gamma_f$ at $G(s,\tau)$ is exactly $\nu(G(s,\tau)) = \frac{a(\tau) X|_{G(s,\tau)} + Y|_{G(s,\tau)}}{\sqrt{1+a(\tau)^2}} $.
	
	By a calibration argument \cite[Theorem~2.3]{MR2333095}, we conclude  that the subgraph $G_f$ is a local perimeter minimizer in $\Omega$.
%	If $a$ is continuous non-decreasing, then there is a sequence $\{a_n\}_{n\in\N}$ of smooth non-decreasing functions so that $a_n\to a$ uniformly on compact sets.
%	It follows that the corresponding sub-graphs $G_{a_n}$ converge to $G_a$ with respect to the Hausdorff distance on compact sets................
\end{proof}

%%%%%%%%%%%%%%%%%%%%%%%%%%%%%%%%%%%%%%%%%%%%%%%%%%%%%%%%%%%%%%%
%%%%%%%%%%%%%%%%%%%%%%%%%%%%%%%%%%%%%%%%%%%%%%%%%%%%%%%%%%%%%%%
\section{First example}\label{sec11241414}

\begin{theorem}\label{thm07111720}
	Define $f:\R^2\to\R$ as
	\[
	f(y,t):=
	\begin{cases}
	 	0 & t\le 0 \\
		\frac{2t}{y} & 0<t\le \frac{y^2}{2}\\
		y & t>\frac{y^2}{2} .
	\end{cases} 
	\]
	Then the following holds:
	\begin{enumerate}[leftmargin=*,label=(\roman*)]
	\item
	$f\in W^{1,p}_{loc}(\R^2)\cap C^0(\R^2)\cap\Lip_{loc}(\R^2\setminus\{0\})$, where $1\le p<3$.
	\item
	$\grad^ff\in\Co^0(\R^2\setminus\{0\})\cap \LI(\R^2)$.
	\item
	$f$ is stable, but $\Gamma_f$ is not an intrinsic plane.
	\item
	The intrinsic graph of $f$ is $\Gamma_f = \Gamma_1\cup\Gamma_2\cup\Gamma_3\subset\HH$, where
	\begin{align*}
 	\Gamma_1 &= \{(0,y,t):\ t\le0,\ y\in\R\} ,\\
	\Gamma_2 &= \{(x,y,0):0\le x\le y\}\cup\{(x,y,0):y\le x\le 0\} ,\\
	\Gamma_3 &= \{(x,y,t):x=y,t\ge 0\} .
	\end{align*}
	The surface $\Gamma_f$ is a cone with respect to the dilations $\delta_\lambda(x,y,z)=(\lambda x,\lambda y,\lambda^2 z)$. 
	\item For each $p\in \Gamma_f\setminus\{0\}$, there is a neighborhood $U$ of p in $\HH$, such that  $\Gamma_f$ is  area minimizing in $U$.	
	\end{enumerate}
\end{theorem}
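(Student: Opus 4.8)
Here is how I would approach it.

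The first four assertions are direct computations on the three open regions $R_1=\{t<0\}$, $R_2=\{0<t<y^2/2\}$, $R_3=\{t>y^2/2\}$, on each of which $f$ is smooth. One checks that the pieces agree along $\{t=0\}$ (both give $0$) and along $\{t=y^2/2\}$ (both give $y$), and that $|f|\le|y|$ on $R_2$, so $f\in C^0(\R^2)$; away from the origin the one-sided gradients stay bounded near the interfaces and $f$ agrees with a smooth function near each point of $\{y=0,\,t\ne0\}$, so $f\in\Lip_{\rm loc}(\R^2\setminus\{0\})$. As $f$ is continuous and piecewise $C^1$, its distributional gradient is the pointwise one; the only unbounded entry is $\de_tf=2/y$ on $R_2$, and $\iint_{R_2\cap\{|y|<\delta\}}|2/y|^p=\int_{|y|<\delta}2^{p-1}|y|^{2-p}\dd y$ is finite exactly when $p<3$, giving $f\in W^{1,p}_{\rm loc}(\R^2)$ for $p<3$ and $f\notin\Lip_{\rm loc}(\R^2)$; this is (i). A computation yields $\grad^ff=0,\ 2t/y^2,\ 1$ on $R_1,R_2,R_3$, so $0\le\grad^ff\le1$ everywhere, $\grad^ff\in L^\infty(\R^2)$, and $\grad^ff$ is continuous off the origin (the interface values $0$ and $1$ match) but not at it; this is (ii). For (iv), $(0,y,t)(f(y,t),0,0)=(f,y,t-\tfrac12yf)$ equals $(0,y,t)$, $(2t/y,y,0)$, $(y,y,t-\tfrac12y^2)$ on $R_1,R_2,R_3$; taking images and closures produces $\Gamma_1,\Gamma_2,\Gamma_3$, and since $f(\lambda y,\lambda^2t)=\lambda f(y,t)$ and the intrinsic graph map intertwines this dilation with $\delta_\lambda$, the set $\Gamma_f$ is a $\delta_\lambda$-cone. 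Finally $\Gamma_f$ is not an intrinsic plane: $\Gamma_1$ and $\Gamma_3$ are two-dimensional subsets of the distinct vertical planes $\{x=0\}$ and $\{x=y\}$, which no single affine vertical plane can contain; equivalently, $\grad^ff$ is not constant.

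The substantial point of (iii) is that $f$ is stable. The difficulty is that $f$ fails to be Lipschitz at the origin and — unlike in Corollary~\ref{cor09102030} — there is no global Lagrangian homeomorphism for $\grad^f$, since the integral curves $\{t=Cy^2\}$ sweeping out $R_2$ all pass through the origin and cannot be separated. So I would work on the half-planes. On $\{y>0\}$ the integral curves of $\grad^f$ are the lines $\{(y,t_0):y>0\}$ with $t_0\le0$, the arcs $\{(y,Cy^2):y>0\}$ with $0\le C\le\tfrac12$, and the arcs $\{(y,\tfrac{y^2}2+c):y>0\}$ with $c>0$; parametrizing them by their value at $y=1$, one checks directly from the explicit formulas that
\[
\Psi(s,\gt)=\bigl(s,\chi(s,\gt)\bigr),\qquad
\chi(s,\gt)=a(\gt)\,\frac{s^2-1}{2}+\gt,\qquad
a(\gt):=\min\{\max\{2\gt,0\},1\},
\]
is a locally biLipschitz Lagrangian homeomorphism $(0,\infty)\times\R\to\{y>0\}$ associated with $\grad^f$: indeed $a$ is continuous, nondecreasing and Lipschitz, $\de_s\chi=a(\gt)s=\tilde f$ and $\de_s^2\chi=a(\gt)=(\grad^ff)^{\widetilde{}}$, while $\de_\gt\chi=a'(\gt)\tfrac{s^2-1}2+1$ equals $s^2$ along the $R_2$-arcs and $1$ along the others, hence is locally bounded and locally bounded away from zero on $(0,\infty)\times\R$. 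The half-plane $\{y<0\}$ is treated the same way.

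Given $\varphi\in C^\infty_c(\R^2\setminus\{0\})$, since $\spt\varphi$ avoids the origin $f$ is Lipschitz on a bounded neighbourhood of $\spt\varphi$ inside $\{y>0\}$, so Lemma~\ref{lemafLp} and Theorem~\ref{thm05112336super} allow the change of variables through $\Psi$ in the $W^{1,1}_{\rm loc}$-formulas for $I_f$ and $II_f$ of \cite{MR2455341}, just as in Proposition~\ref{prop06061521}(iii) and Lemma~\ref{lem04061131}. The restriction to $\{y>0\}$ of the integrand of $I_f(\varphi)$ pulls back to $-\tfrac{a}{\sqrt{1+a^2}}\de_s(\tilde\varphi\,\de_\gt\chi)$, and that of $II_f(\varphi)$ to
\[
\frac{(\de_s\tilde\varphi)^2\,\de_\gt\chi}{(1+a^2)^{3/2}}
+\tilde\varphi^2\,\frac{(a'(\gt)s)^2/\de_\gt\chi-2a'(\gt)}{(1+a^2)^{3/2}},
\]
where the integrations by parts in $s$ leave no boundary term at $s=0$ owing to the explicit factor $s$, and there is none in $\gt$ because $\tilde\varphi$ has compact $\gt$-support. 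Now $(a'(\gt)s)^2/\de_\gt\chi-2a'(\gt)\equiv0$ — this is the inequality $2a'\ge b'^2$ of Lemma~\ref{lem04061138} holding with equality, and here it is immediate since $a'\in\{0,2\}$ while $\de_\gt\chi=s^2$ wherever $a'=2$ — so the $\{y>0\}$-contribution to $II_f(\varphi)$ is $\int(\de_s\tilde\varphi)^2\de_\gt\chi\,(1+a^2)^{-3/2}\ge0$, likewise for $\{y<0\}$, whence $II_f(\varphi)\ge0$. For $I_f$ one uses linearity: $\{y>0\}$, $\{y<0\}$, $\{t<0\}$ and $\{t>y^2/2\}$ cover $\R^2\setminus\{0\}$, $f$ is a Lipschitz graphical strip (resp.\ the affine functions $0$ and $y$) on these and hence stationary on each, so decomposing $\varphi$ by a subordinate partition of unity gives $I_f(\varphi)=0$. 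Thus $I_f(\varphi)=0$ and $II_f(\varphi)\ge0$ for all $\varphi\in C^\infty_c(\R^2\setminus\{0\})$. To reach arbitrary $\varphi\in C^\infty_c(\R^2)$, take logarithmic cut-offs $\eta_\epsilon\in\Lip_c(\R^2)$ with $\eta_\epsilon\equiv0$ near the origin, $\eta_\epsilon\equiv1$ off $B_\epsilon$ and $\|\grad\eta_\epsilon\|_{L^2}\to0$; then suitable mollifications of $\varphi\eta_\epsilon$ lie in $C^\infty_c(\R^2\setminus\{0\})$ and converge to $\varphi$ in $W^{1,2}$, and since $f\in W^{1,2}_{\rm loc}(\R^2)$ by (i), Lemma~\ref{lem08261641} passes to the limit, giving $I_f(\varphi)=0$ and $II_f(\varphi)\ge0$. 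Hence $f$ is stable, which completes (iii).

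For (v), let $p\in\Gamma_f\setminus\{0\}$. If the horizontal projection of $p$ does not vanish, then $p\in\Omega=\HH\setminus\{x=y=0\}$; the computation in the proof of Proposition~\ref{prop06211845} shows that the divergence-free field $\nu$ of \eqref{eq07121716} coincides, up to sign, with the horizontal unit normal of $G_f$ along $\Gamma_f\cap\Omega$, and near $p$ this sign is constant because $\Gamma_f\cap\Omega$ avoids $\{y=0\}$, so the calibration argument of \cite[Theorem~2.3]{MR2333095} makes $G_f$ a local perimeter minimizer at $p$. If instead $p\in\{x=y=0\}$, then $p=(0,0,z_0)$ with $z_0<0$, a neighbourhood of $p$ meets $\Gamma_f$ only in $\Gamma_1$, and there $G_f$ coincides with the half-space bounded by the intrinsic plane $\{x=0\}$, which is area minimizing by \cite{MR2333095}. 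I expect the main obstacle to be the second variation near the non-Lipschitz point: the absence of a global Lagrangian parametrization forces the splitting into half-planes and the logarithmic-capacity argument, and one must check carefully that the changes of variables generate no boundary terms along $\{s=0\}$ — which works precisely because the offending integrand carries an explicit factor $s$.
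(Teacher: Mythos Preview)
Your proof is correct, and for part~(iii) it follows a genuinely different route from the paper's.

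\textbf{Comparison for (iii).} The paper argues by \emph{approximation of $f$}: it introduces the Lipschitz graphical strips $f_\epsilon$ associated with $a_\epsilon(\tau)=\min\{\max\{\tau/\epsilon,0\},1\}$ (Lemma~\ref{lem07121556}), proves $f_\epsilon\to f$ in $W^{1,p}_{\rm loc}$ for $p<3$, and then uses the $W^{1,2}$-continuity of $I_f,II_f$ (Lemma~\ref{lem08261641}). Stationarity of $f_\epsilon$ is Proposition~\ref{prop06061521}(iii); for $II_{f_\epsilon}$ the negative term is estimated explicitly and shown to vanish as $\epsilon\to0$. You instead keep $f$ fixed and build an explicit locally biLipschitz Lagrangian homeomorphism on each half-plane $\{y>0\}$, $\{y<0\}$ (which the paper never does for this $f$), perform the change of variables there, and observe that the potentially negative term in $II_f$ vanishes \emph{identically} because $(a')^2-2a'=0$ when $a'\in\{0,2\}$; the singular point is then removed by a $2$-capacity cutoff. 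Your approach makes the mechanism of stability transparent (an exact algebraic cancellation rather than a limit), while the paper's approximation scheme is more portable --- it is reused verbatim for the Cantor example of Section~\ref{sec09011226}, where no half-plane Lagrangian with $a'(a'-2)=0$ is available.

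\textbf{Two small points.} First, your invocation of Lemma~\ref{lem04061138} (``$2a'\ge b'^2$ with equality'') is not quite the right citation: that lemma is stated for the normalized chart $c(\tau)=\tau$, whereas your chart has $c(\tau)=\tau-a(\tau)/2$. The computation you actually need is simply $(a's)^2-2a'\,\partial_\tau\chi=(a')^2-2a'=a'(a'-2)=0$, which you also state; just drop the cross-reference. Second, in~(v) you treat $p=(0,0,z_0)$ on the vertical axis only for $z_0<0$; the case $z_0>0$ (where a neighbourhood of $p$ meets $\Gamma_f$ only in the intrinsic plane $\Gamma_3=\{x=y\}$) should be mentioned as well. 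The paper handles~(v) with four calibrating half-spaces ($\nu$ on $\{y>0\}$ and $\{y<0\}$, $X$ on $\{z<0\}$, $(X-Y)/\sqrt2$ on $\{z>0\}$), which is equivalent to your argument once both signs of $z_0$ are covered.
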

See Figure~\ref{fig09030019} for an image of the surface $\Gamma_f$.
\begin{figure}
\includegraphics[width=0.5\textwidth]{./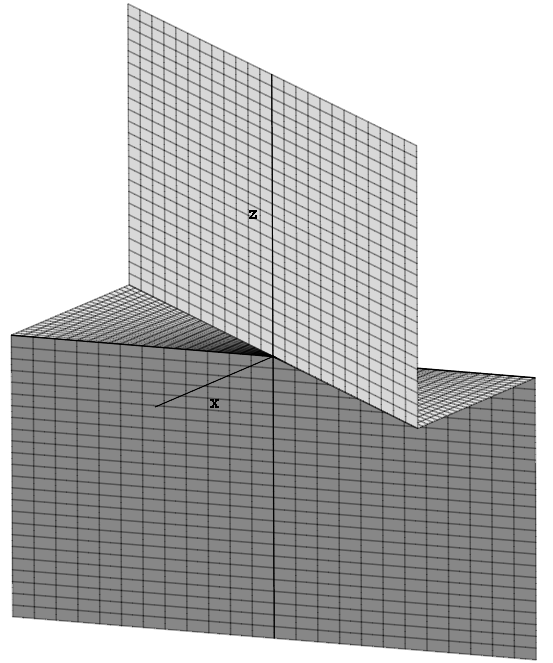}
\caption{\label{fig09030019}
	Image of the surface $\Gamma_f$ from Theorem~\ref{thm07111720}}
\end{figure}
\begin{remark}
	We are not able to prove nor disprove that $\Gamma_f$ is area minimizing in a neighborhood of $(0,0,0)$.
\end{remark}
\begin{remark}\label{rem09042024}
	In a neighborhood of $(1,0)\in\R^2$, the function $f$ above is Lipschitz but not $C^1$.
	Therefore, by \cite[Theorem 1.3]{MR2583494}, $f$ is not a vanishing viscosity solution of the minimal surface equation in the sense of \cite[Definition 1.1]{MR2583494}.
	However, $f$ is a distributional solution to the equation.
\end{remark}
\begin{proof}[Proof of Theorem~\ref{thm07111720}] Point $(iv)$ is immediate. Let us prove point $(v)$. Notice that the vector field
$\nu$ defined in ~\eqref{eq07121716} is a calibration of $\Gamma_f$ in the open half-spaces $S_1:=\{y>\,0\}$ and $S_2:=\{y<\,0\}$, while $X$ is a calibration %of $\Gamma_f$ 
in the open half-space $S_3:=\{t<\,0\}$ and $\frac{X-Y}{\sqrt 2}$ is a calibration in $S_4:=\{t>\,0\}$.
Since every point in $\Gamma_f\setminus\{0\}$ belongs to one of these four open sets, $\Gamma_f\setminus\{0\}$ is locally area minimizing.
%	The fact that the intrinsic graph $\Gamma_f$ of $f$ is locally area minimizing in $\HH\setminus\{0\}$ follows from the fact that $\Gamma_f$ is the union of area minimizing surfaces and that the vector field $\nu$ defined in~\eqref{eq07121716} is a calibration of $\Gamma_f$ on $\bb H\setminus\{x=y=0\}$, as in the proof of Proposition~\ref{prop06211845}.
	This shows $(v)$.
	
	Since $f$ is absolutely continuous along almost every line parallel to the coordinate axes, its distributional derivatives correspond to the pointwise derivatives:
	\begin{equation*}\label{eq06211553}
	 	\de_y f(y,t) = 
		\begin{cases}
		 	0 & t\le 0 \\
		-\frac{2t}{y^2} & 0<t\le \frac{y^2}{2}\\
		1 & t>\frac{y^2}{2} ,
		\end{cases}
	\end{equation*}
	\begin{equation*}\label{eq06211554}
		\de_t f(y,t) =
		\begin{cases}
	 	0 & t\le 0 \\
		\frac{2}{y} & 0<t\le \frac{y^2}{2}\\
		0 & t>\frac{y^2}{2} ,
		\end{cases}
	\end{equation*}	
	\begin{equation*}\label{eq06211459}
	 	\grad^ff(y,t) = (\de_yf+f\de_tf)(y,t) =
		\begin{cases}
	 	0 & t\le 0 \\
		\frac{2t}{y^2} & 0<t\le \frac{y^2}{2}\\
		1 & t>\frac{y^2}{2} .
		\end{cases}
	\end{equation*}
	It is then immediate to see that the parts $(i)$ and $(ii)$ of the theorem are true.
	
	Part $(iii)$ follows from the Lemma~\ref{lem07121720} below.
\end{proof}

\begin{lemma}[Approximation]\label{lem07121556}
	For $\epsilon>0$, define $f_\epsilon:\R^2\to\R$ as
	\[
	f_\epsilon(y,t) :=
	\begin{cases}
	 	0 & t\le 0 \\
		\frac{2yt}{y^2+2\epsilon} & 0<t\le \frac{y^2+2\epsilon}{2}\\
		y & t>\frac{y^2+2\epsilon}{2}
	\end{cases}
	\]
	Then, the following holds for every $\epsilon>0$:
	\begin{enumerate}[label=(\alph*)]
	\item
	$f_\epsilon\in\Lip_{loc}(\R^2)$ and its biLipschitz Lagrangian homeomorphism $\Psi_\epsilon:\R^2\to\R^2$ is $\Psi(s,\tau) = (s,\chi_\epsilon(s,\tau))$ with $\chi_\epsilon(s,\tau)=\frac{a_\epsilon(\tau)}{2} s^2 + \tau$, where
	\[
	a_\epsilon(\tau) := 
	\begin{cases}
	 	0 & \tau\le0 \\
		\frac\tau\epsilon & 0\le\tau\le \epsilon \\
		1 & \epsilon\le\tau .
	\end{cases}
	\]
	\item
	$\lim_{\epsilon\to0^+}f_\epsilon = f$ in $W^{1,p}_{loc}(\R^2)$ for all $p\in[1,3)$.
	\item
	$\grad^{f_\epsilon}f_\epsilon\in C^0(\R^2)$ and
	$\lim_{\epsilon\to0^+}\grad^{f_\epsilon}f_\epsilon = \grad^ff$ in $\LL^p_{loc}(\R^2)$ for all $p\in[1,\infty)$.
	\end{enumerate}
\end{lemma}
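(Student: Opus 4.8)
The plan is to verify the three claims in order, since each one is a fairly direct (if slightly tedious) computation once the previous one is in place. For part~(a), I would first check by hand that $f_\epsilon$ is locally Lipschitz: the only place to worry is the interfaces $\{t=0\}$ and $\{t=(y^2+2\epsilon)/2\}$, and across both the three pieces match continuously with bounded difference quotients, the key point being that the denominator $y^2+2\epsilon$ is bounded away from $0$ on bounded sets (this is exactly what the regularization by $\epsilon$ buys us, compared to $f$ itself). Then I would identify the integral curves of $\grad^{f_\epsilon}=\de_y+f_\epsilon\de_t$: solving $\gamma'(s)=f_\epsilon(s,\gamma(s))$ starting from a point on the axis $\{y=0\}$, one finds that a curve starting at $(0,\tau)$ with $\tau\le 0$ stays in $\{t\le 0\}$ and is horizontal; a curve starting with $\tau\ge\epsilon$ lies in the top region where $f_\epsilon(y,t)=y$, giving $\gamma(s)=s^2/2+\tau$; and for $0\le\tau\le\epsilon$, in the middle region $f_\epsilon(y,t)=2yt/(y^2+2\epsilon)$, so $\gamma'=2s\gamma/(s^2+2\epsilon)$, a separable linear ODE with solution $\gamma(s)=\tau(s^2+2\epsilon)/(2\epsilon)=\frac{\tau}{2\epsilon}s^2+\tau$. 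In all three cases this is $\chi_\epsilon(s,\tau)=\frac{a_\epsilon(\tau)}{2}s^2+\tau$ with $a_\epsilon$ as stated, and by Theorem~\ref{existLpLip} (applied on bounded pieces) and Corollary~\ref{cor09102030} this is \emph{the} biLipschitz Lagrangian homeomorphism normalised by $\chi_\epsilon(0,\tau)=\tau$. One should double-check that $a_\epsilon$ really is continuous and nondecreasing so that the curves fit together into a homeomorphism, but this is clear from the formula.

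For part~(b), I would compute the pointwise partial derivatives of $f_\epsilon$ exactly as was done for $f$ in the proof of Theorem~\ref{thm07111720} (they agree with the distributional ones since $f_\epsilon$ is absolutely continuous on a.e.\ coordinate line), observe that $f_\epsilon\to f$ pointwise a.e.\ and $\de_y f_\epsilon\to\de_y f$, $\de_t f_\epsilon\to\de_t f$ pointwise a.e.\ as $\epsilon\to0^+$, and then get $L^p_{loc}$ convergence from dominated convergence. The domination is the delicate point: near $\{y=0\}$ the derivative $\de_t f_\epsilon=2y/(y^2+2\epsilon)$ is bounded uniformly in $\epsilon$ by $1/\sqrt{2\epsilon}$ but \emph{not} uniformly as $\epsilon\to 0$, so one instead needs a uniform $L^p_{loc}$ bound. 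Here $|\de_t f_\epsilon|\le \min\{2/|y|,\, 2|y|/(2\epsilon)\}\le 2/|y|$ on the region where it is nonzero, and $1/|y|\in L^p_{loc}(\R^2)$ precisely for $p<2$; for $\de_y f_\epsilon$ and $f_\epsilon$ themselves one has a locally bounded dominating function, so the binding constraint comes from $f_\epsilon\,\de_t f_\epsilon$, which is locally bounded. Wait --- the constraint $p<3$ must come from $\de_y f_\epsilon$ composed with something; in fact $\de_y f_\epsilon = -2yt/(y^2+2\epsilon)^2 \cdot (\text{stuff})$, and on the relevant strip $0<t\le(y^2+2\epsilon)/2$ one has $|\de_y f_\epsilon|\lesssim 1$, so that term is fine. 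The true obstruction to $p\ge 3$ is that $f\notin W^{1,3}_{loc}$ because $\de_t f = 2/y$ is in $L^p_{loc}$ only for $p<2$ --- so actually $p<2$, and the stated range $p<3$ must be explained by the fact that $\de_t f$ is supported in the thin region $\{0<t\le y^2/2\}$, whose width shrinks like $y^2$ near the axis; integrating $|2/y|^p$ against the measure of this region gives $\int |y|^{2-p}\,\mathrm{d}y<\infty$ iff $p<3$. This region-shrinking estimate is the one real computation in~(b) and should be carried out explicitly, both for the uniform bound and for the limit.

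For part~(c), continuity of $\grad^{f_\epsilon}f_\epsilon$ follows from part~(a) together with Proposition~\ref{prop06061521}(i) (or a direct computation: $\grad^{f_\epsilon}f_\epsilon(s,\chi_\epsilon(s,\tau))=a_\epsilon(\tau)$, which is continuous in $(s,\tau)$ and, since $\Psi_\epsilon$ is a homeomorphism, continuous in $(y,t)$). For the $L^p_{loc}$ convergence one computes $\grad^{f_\epsilon}f_\epsilon$ in the three regions --- it equals $0$, $2t/(y^2+2\epsilon)$, and $1$ respectively --- so it is \emph{bounded} by $1$ uniformly in $\epsilon$, converges pointwise a.e.\ to $\grad^ff$, and dominated convergence on bounded sets gives $L^p_{loc}$ convergence for every $p<\infty$. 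I expect the main obstacle in the whole lemma to be pinning down the sharp exponent bookkeeping in part~(b) --- specifically, proving a uniform-in-$\epsilon$ $W^{1,p}_{loc}$ bound that is genuinely valid up to $p<3$ by exploiting that the "bad" term $\de_t f_\epsilon$ lives on a strip of width $O(y^2+2\epsilon)$ around the axis --- since everything else reduces to dominated convergence with an easy locally bounded majorant.
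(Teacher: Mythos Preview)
Your plan is correct and matches the paper's approach almost exactly: compute the partial derivatives, use pointwise convergence plus dominated convergence for (b), and observe the uniform bound $|\nabla^{f_\epsilon}f_\epsilon|\le 1$ for (c). There is, however, one genuine subtlety in~(b) that your sketch does not quite resolve, and it is precisely the point you flag as ``the main obstacle''.

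The bound $|\de_t f_\epsilon|\le 2/|y|$ is correct on the support $\{0<t\le(y^2+2\epsilon)/2\}$, but this alone does \emph{not} give a uniform $L^p_{loc}$ majorant for any $p\ge 1$. The reason is that, unlike the limit $\de_t f$, the function $\de_t f_\epsilon$ is supported on a strip whose width near $y=0$ is $\epsilon$, not $\sim y^2$; on that extra sliver $\{y^2/2<t\le(y^2+2\epsilon)/2\}$ the function $2/|y|$ is not locally $L^p$ in $(y,t)$ once you integrate in $t$ first. What the paper does (and what your plan needs) is a second, sharper bound on this transition region: from $t\le(y^2+2\epsilon)/2$ one gets $y^2+2\epsilon\ge 2t$, hence
\[
|\de_t f_\epsilon| = \frac{2|y|}{y^2+2\epsilon} \le \frac{|y|}{t} \le \frac{\sqrt{2}}{\sqrt{t}}
\qquad\text{when } t>\tfrac{y^2}{2}.
\]
The two-piece majorant $g_3$ equal to $2/|y|$ on $\{0<t\le y^2/2\}$ and $\sqrt{2}/\sqrt{t}$ on $\{y^2/2<t<y^2/2+1\}$ then dominates $|\de_t f_\epsilon|$ for all $\epsilon\in(0,1)$, and a direct computation shows $g_3\in L^p_{loc}$ exactly for $p<3$. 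Once you insert this missing bound, everything else in your plan goes through as written.
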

\begin{proof}
	Since $f_\epsilon$ is absolutely continuous along almost every line parallel to the coordinate axes, its distributional derivatives are
	\[
	\de_y f_\epsilon = 
	\begin{cases}
	 	0 & t\le 0 \\
		-\frac{2t(y^2-2\epsilon)}{(y^2+2\epsilon)^2} & 0<t\le \frac{y^2+2\epsilon}{2}\\
		1 & t>\frac{y^2+2\epsilon}{2}
	\end{cases}
	\quad\text{and}\quad
	\de_t f_\epsilon =
	\begin{cases}
	 	0 & t\le 0 \\
		\frac{2y}{y^2+2\epsilon} & 0<t\le \frac{y^2+2\epsilon}{2}\\
		0 & t>\frac{y^2+2\epsilon}{2}
	\end{cases}
	.
	\]
	Since both $\de_y f_\epsilon$ and $\de_t f_\epsilon$ are bounded on bounded subsets of $\R^2$, we obtain that $f_\epsilon\in\Lip_{loc}(\R^2)$.
	A direct computation shows that $f(s,\chi_\epsilon(s,\tau)) = \de_s\chi_\epsilon(s,\tau)$ for all $s,\tau\in\R$ and that $\Psi_\epsilon$ is indeed biLipschitz.
	So, part $(a)$ holds.
	
	Let us now observe that $f_\epsilon\to f$, $\de_yf_\epsilon\to \de_yf$ and $\de_tf_\epsilon\to \de_tf$ pointwise almost everywhere in $\R^2$.
	Moreover, $|f_\epsilon|\le g_1$, $|\de_yf_\epsilon|\le g_2$ and $|\de_tf_\epsilon|\le g_3$ almost everywhere in $\R^2$, where 
	\begin{align*}
	 	g_1(y,t) &:= |y|, \qquad
		g_2(y,t) := 1, \\
		g_3(y,t) &:= 
			\begin{cases}
		 	0 & t\le 0 \\
			\frac{2}{|y|} & 0<t\le \frac{y^2}{2}\\
			\frac{\sqrt2}{\sqrt t} & \frac{y^2}{2}<t<\frac{y^2}{2} +1 \\
			0 & \frac{y^2}{2}+1<t .
			\end{cases}
	\end{align*}
	Since, for every $L>0$ and $p\neq2$,
	\[
	\int_{[-L,L]^2} \!\!\!\!|g_3(y,t)|^p \dd y\dd t
	\!=\!\! \left(1-p\frac{2^{p-1}}{2-p}\right) \!\!\int_{-L}^L \!\!|y|^{2-p} \dd y 
		+ \frac{2^p}{2-p} \!\!\int_{-L}^{L} \!\!\left(y^2+2\right)^{\frac{2-p}{2}} \!\! \dd y ,
	\]
	then $g_3\in\LL^p_{loc}(\R^2)$ for all $1\le p<3$.
	Clearly, we also have $g_1,g_2\in\LL^p_{loc}(\R^2)$ for all $1\le p<3$.
	Therefore, by the Dominated Convergence Theorem, $f_\epsilon\to f$ in $W^{1,p}_{loc}(\R^2)$ for all $1\le p<3$, i.e., statement $(b)$ in the lemma.
	
	For part $(c)$, one can check by direct computation that
	\[
	\grad^{f_\epsilon}f_\epsilon = 
	\begin{cases}
	 	0 & t\le 0 \\
		\frac{2 t}{y^2+2\epsilon} & 0<t\le \frac{y^2+2\epsilon}{2}\\
		1 & t>\frac{y^2+2\epsilon}{2}
	\end{cases}
	\]
	Moreover, we have $\grad^{f_\epsilon}f_\epsilon \to \grad^ff$ in $\LL^p_{loc}(\R^2)$ for all $p\in[1,\infty)$.
	Indeed, on one hand the pointwise convergence $\grad^{f_\epsilon}f_\epsilon$ in $\R^2$ is clear.
	On the other hand, $|\grad^{f_\epsilon}f_\epsilon(y,t)|\le 1$ for a.e.~$(y,t)\in\R^2$ and for all $\epsilon\in(0,1)$, and therefore we can conclude again by the Dominated Convergence Theorem.
\end{proof}

\begin{lemma}[Stability]\label{lem07121720}
	The function $f$ defined in Theorem~\ref{thm07111720} is stable.
\end{lemma}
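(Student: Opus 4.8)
The plan is to verify directly the two conditions defining stability of the function $f$ of Theorem~\ref{thm07111720}: stationarity, $I_f(\varphi)=0$ for every $\varphi\in C^\infty_c(\R^2)$, and non-negativity of the second variation, $II_f(\varphi)\ge0$ for every such $\varphi$. I will use throughout the pointwise expressions for $\de_yf$, $\de_tf$ and $\grad^ff$ obtained in the proof of Theorem~\ref{thm07111720}; in particular $\grad^ff$ vanishes on $A:=\{t<0\}$, equals $2t/y^2$ on $B:=\{0<t<y^2/2\}$ and equals $1$ on $C:=\{t>y^2/2\}$, and $f\in W^{1,p}_{loc}(\R^2)$ for $p<3$ with $\grad^ff\in\LI(\R^2)$, so that the integrands in the formulas for $I_f$ and $II_f$ recalled in Section~\ref{sec06152116} are locally integrable. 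Stationarity is the easy part: by Lemma~\ref{lem07121556}$(a)$ each $f_\epsilon$ is a locally Lipschitz graphical strip with profile $a_\epsilon\in\Lip_{loc}(\R)$, hence $I_{f_\epsilon}(\varphi)=0$ by Proposition~\ref{prop06061521}$(iii)$, and since $f_\epsilon\to f$ in $W^{1,2}_{loc}(\R^2)$ by Lemma~\ref{lem07121556}$(b)$, Lemma~\ref{lem08261641}$(i)$ gives $I_f(\varphi)=\lim_{\epsilon\to0^+}I_{f_\epsilon}(\varphi)=0$, i.e.~\eqref{eq06221238} holds.

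For the second variation I would start from the formula
\[
II_f(\varphi)=\int_{\R^2}\frac{(\grad^f\varphi+\de_tf\,\varphi)^2}{(1+(\grad^ff)^2)^{3/2}}\dd\LL^2+\int_{\R^2}\frac{\grad^ff}{\sqrt{1+(\grad^ff)^2}}\,\de_t(\varphi^2)\dd\LL^2 .
\]
For a.e.~fixed $y$, the function $t\mapsto \grad^ff(y,t)/\sqrt{1+\grad^ff(y,t)^2}$ is Lipschitz and non-decreasing, vanishing for $t\le0$ and constant for $t\ge y^2/2$, with a.e.~derivative $\de_t(\grad^ff)\,(1+(\grad^ff)^2)^{-3/2}=\tfrac{2}{y^2}(1+(2t/y^2)^2)^{-3/2}\one_B$. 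Integrating the second term by parts in $t$ (no boundary terms, since $\varphi$ is compactly supported) turns $II_f(\varphi)$ into
\[
\int_{\R^2}\frac{(\grad^f\varphi+\de_tf\,\varphi)^2}{(1+(\grad^ff)^2)^{3/2}}\dd\LL^2-\int_{B}\frac{2/y^2}{(1+(2t/y^2)^2)^{3/2}}\,\varphi^2\dd\LL^2 ,
\]
and, discarding the non-negative contributions of $A$ and $C$ to the first integral, it suffices to show $\int_{B}\big[(\grad^f\varphi+\de_tf\,\varphi)^2-\tfrac{2}{y^2}\varphi^2\big](1+(2t/y^2)^2)^{-3/2}\dd\LL^2\ge0$.

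Here I would change variables using that $B$ is foliated by the integral curves $t=c\,y^2$, $c\in(0,1/2)$, of $\grad^f$ lying in $B$: the map $(s,c)\mapsto(s,c\,s^2)$ is a diffeomorphism of $(\R\setminus\{0\})\times(0,1/2)$ onto $B$ with Jacobian $s^2$. Setting $\tilde\varphi(s,c):=\varphi(s,c\,s^2)$, one has $\de_s\tilde\varphi=(\grad^f\varphi)(s,c\,s^2)$, $\de_tf(s,c\,s^2)=2/s$ and $2t/y^2=2c$, so the last integral becomes
\[
\int_0^{1/2}\frac{1}{(1+4c^2)^{3/2}}\left(\int_{\R}\Big[(s\,\de_s\tilde\varphi+2\tilde\varphi)^2-2\tilde\varphi^2\Big]\dd s\right)\dd c .
\]
For fixed $c$ the slice $s\mapsto\tilde\varphi(s,c)$ is smooth with compact support, so $\int_{\R}4s\,\tilde\varphi\,\de_s\tilde\varphi\dd s=\int_{\R}2s\,\de_s(\tilde\varphi^2)\dd s=-\int_{\R}2\tilde\varphi^2\dd s$, whence $\int_{\R}\big[(s\,\de_s\tilde\varphi+2\tilde\varphi)^2-2\tilde\varphi^2\big]\dd s=\int_{\R}s^2(\de_s\tilde\varphi)^2\dd s\ge0$. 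Therefore $II_f(\varphi)\ge0$ for every $\varphi\in C^\infty_c(\R^2)$, and combined with stationarity this proves that $f$ is stable.

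The single step that makes the proof work is the cancellation that, after one integration by parts in $s$, replaces the a priori indefinite expression $(s\,\de_s\tilde\varphi+2\tilde\varphi)^2-2\tilde\varphi^2$ by the manifestly non-negative $s^2(\de_s\tilde\varphi)^2$: this is precisely what absorbs the negative zeroth-order term $-\tfrac{2}{y^2}(1+(2t/y^2)^2)^{-3/2}$ of the second variation on the region $B$, where $\grad^ff$ is non-constant. The rest I expect to be routine bookkeeping: checking, via the pointwise derivative formulas from the proof of Theorem~\ref{thm07111720}, that the integration by parts in $t$ is legitimate across the interfaces $\{t=0\}$ and $\{t=y^2/2\}$ — which it is, since $\grad^ff/\sqrt{1+(\grad^ff)^2}$ is genuinely Lipschitz in $t$ with no jump — and that the parabolic change of variables on $B$ and the final integration by parts in $s$ leave no boundary terms, since $\varphi$ is compactly supported and each leaf $\{t=c\,s^2\}$ stays inside $B$.
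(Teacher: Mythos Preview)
Your proof is correct. The stationarity argument is identical to the paper's. For the second variation you take a genuinely different route: the paper continues with the approximation, writing $II_f(\varphi)=\lim_{\epsilon\to0}II_{f_\epsilon}(\varphi)$ via Lemma~\ref{lem08261641}, expressing each $II_{f_\epsilon}$ in the Lagrangian coordinates of Proposition~\ref{prop06061521}$(iv)$, and showing by a rescaling $v=s/\sqrt{2\epsilon}$, $w=\tau/\epsilon$ that the negative term is $O(\sqrt\epsilon)$; in particular the paper never asserts $II_{f_\epsilon}(\varphi)\ge0$, only that its negative part vanishes in the limit. You instead work directly with $f$: integrate by parts in $t$, restrict to the region $B$ where $\grad^ff$ actually varies, and parametrize $B$ by the \emph{limiting} integral curves $t=c\,y^2$, $c\in(0,\tfrac12)$, to obtain the exact identity $\int_{\R}\big[(s\,\de_s\tilde\varphi+2\tilde\varphi)^2-2\tilde\varphi^2\big]\dd s=\int_{\R}s^2(\de_s\tilde\varphi)^2\dd s$. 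Your argument is more elementary---it bypasses the continuity of $II$ in $W^{1,2}$ for the second-variation step---and gives the sharper information that non-negativity holds leaf by leaf; the paper's approximation scheme, on the other hand, has the advantage of transferring essentially verbatim to the Cantor example of Section~\ref{sec09011226}, where no such direct foliation of the ``active'' region is available.
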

\begin{proof}[Proof for ``$f$ satisfies \eqref{eq06221238}'']
	Let $f_\epsilon$ as in Lemma~\ref{lem07121556} and $\varphi\in C^\infty_c(\R^2)$.
	Since $f_\epsilon\to f$ in $W^{1,2}_{loc}(\R^2)$, then $I_f(\varphi)=\lim_{\epsilon\to0}I_{f_\epsilon}(\varphi)$ by Lemma~\ref{lem08261641}.
	By Proposition~\ref{prop06061521}.(iii), $I_{f_\epsilon}(\varphi)=0$ for all $\epsilon$, thus $I_f(\varphi)=0$.
\end{proof}

\begin{proof}[Proof for ``$f$ satisfies \eqref{eq06221239b}'']
	Let $f_\epsilon$ and $a_\epsilon$ as in Lemma~\ref{lem07121556} and $\varphi\in C^\infty_c(\R^2)$.
	Since $f_\epsilon\to f$ in $W^{1,2}_{loc}(\R^2)$, then $II_f(\varphi)=\lim_{\epsilon\to0}II_{f_\epsilon}(\varphi)$ by Lemma~\ref{lem08261641}.
	By Proposition~\ref{prop06061521}.(iv), 
	\[
	II_{f_\epsilon}(\varphi) 
	= \int_{\R^2}
		\left[(\de_s\tilde\varphi_\epsilon)^2 \frac{(\frac{a'_\epsilon}2s^2+1)}{(1+a_\epsilon^2)^{3/2}} 
		-2 \tilde\varphi_\epsilon^2 \frac{ a_\epsilon' }{ (1+a_\epsilon^2)^{3/2} (\frac{a_\epsilon'}2s^2+1) }
	\right]\dd s\dd\tau ,
	\]
	where we have $\tilde\varphi_\epsilon(s,\tau)=\varphi(s,\chi_\epsilon(s,\tau))$, $\chi_\epsilon(s,\tau)=\frac{a_\epsilon(\tau)}{2} s^2+\tau$ and $\tilde f_\epsilon(s,\tau)=\de_s\chi_\epsilon(s,\tau)=a_\epsilon(\tau)s$.
	Since 
	\[
	(\de_s\tilde\varphi_\epsilon)^2\frac{(\frac{a_\epsilon'}2s^2+1)}{(1+a_\epsilon^2)^{3/2}} \ge 0,
	\]
	the thesis follows if it is true that
	\begin{equation}\label{eq06241642}
	 	\limsup_{\epsilon\to0}
		\int_{\R^2} 
		\tilde\varphi_\epsilon^2
		\frac{
			 a_\epsilon'
		}{
			(1+(\de_s\tilde f_\epsilon)^2)^{3/2} (\frac{a_\epsilon'}2s^2+1)
		}
		\dd s\dd\tau
		\le0 .
	\end{equation}
	For proving \eqref{eq06241642}, 
	Recall that $a_\epsilon'(\tau)=1/\epsilon$ for $\tau\in[0,\epsilon]$ and $0$ otherwise.
	So, if we perform the change of variables $v=\frac{s}{\sqrt{2\epsilon}}$ and $w=\frac\tau\epsilon$, we obtain
	\begin{multline*}
	 	\int_{\R^2} 
		\tilde\varphi_\epsilon^2
		\frac{
			 a_\epsilon'
		}{
			(1+(a_\epsilon)^2)^{3/2} (\frac{a_\epsilon'}2s^2+1)
		}
		\dd\LL^2(s,\tau) \\
		=
		\int_{\R} \int_0^\epsilon
		\varphi(s,\frac{\tau s^2}{2\epsilon}+\tau)^2
		\frac{
			1/\epsilon
		}{
			(1+(\tau/\epsilon)^2)^{3/2} (\frac1{2\epsilon}s^2+1)
		}
		\dd \tau\dd s \\
		=
		\sqrt{2\epsilon}\int_{\R} \int_0^1
		\varphi(\sqrt{2\epsilon}v,\epsilon w(v^2+1))^2
		\frac{
			1
		}{
			(1+w^2)^{3/2} (v^2+1)
		}
		\dd w\dd v \\
		\le 
		\sqrt{2\epsilon} M 
		\int_{\R}\frac1{v^2+1}\dd v
		\int_0^1 \frac1{(1+w^2)^{3/2}}\dd w
	\end{multline*}
	where $M=\sup_{\R^2}\varphi^2$.
	Since $\int_{\R}\frac1{v^2+1}\dd v\int_0^1 \frac1{(1+w^2)^{3/2}}\dd w<\infty$, taking the limsup as $\epsilon\to0$ we get \eqref{eq06241642}.
\end{proof}

%%%%%%%%%%%%%%%%%%%%%%%%%%%%%%%%%%%%%%%%%%%%%%%%%%%%%%%%%%%%%%%
%%%%%%%%%%%%%%%%%%%%%%%%%%%%%%%%%%%%%%%%%%%%%%%%%%%%%%%%%%%%%%%
%\newpage
\section{Second example}\label{sec09011226}
In this section we construct the example that proves Theorem~\ref{thm06102321}.
We summarize the results in the following statement, whose proof covers the whole section.
A plot of the graph $\Gamma_f$ can be found in Figure~\ref{fig09030022}.
\begin{figure}
\includegraphics[width=0.5\textwidth]{./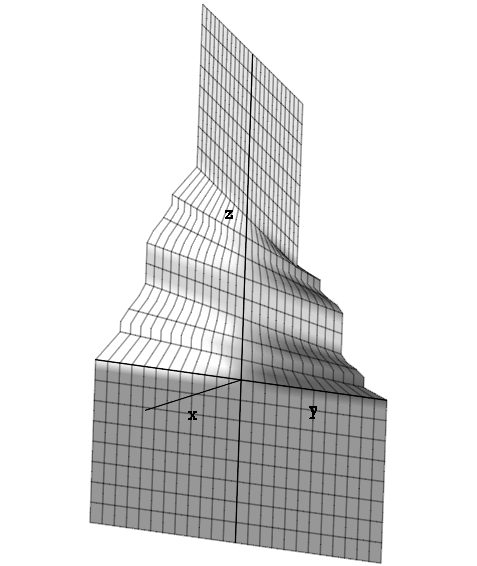}
\caption{\label{fig09030022}
	Image of the surface $\Gamma_f$ from Theorem~\ref{thm09011227}}
\end{figure}

\begin{theorem}\label{thm09011227}
	Let $a:\R\to[0,1]$ be the function that is the Cantor staircase when restricted to $[0,1]$ and with $a(\tau)=0$ for $\tau\le0$, $a(\tau)=1$ for $\tau\ge1$.
	Let $f\in\Cow(\R^2)$ be the function such that $f(s,a(\tau)s^2/2+\tau) = a(\tau)s$, as in Proposition~\ref{prop06061521}.	
	Then the following holds:
	\begin{enumerate}[leftmargin=*,label=(\roman*)]
	\item
	$f\in W^{1,2}_{loc}(\R^2)\cap\Cow(\R^2)\cap\Lip_{loc}(\R^2\setminus(\{0\}\times\R))$.
	\item
	$f$ is stable, but $\Gamma_f$ is not an intrinsic plane.
	The surface $\Gamma_f$ is locally area minimizing in $\HH\setminus\{(0,0,z):z\in C\}$, where $C\subset[0,1]$ is the ternary Cantor set.	
	\end{enumerate}
\end{theorem}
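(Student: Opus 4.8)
The plan is to follow the blueprint of the first example (Section~\ref{sec11241414}): the $\Cow$ and $\Lip_{loc}$ parts of~(i) come for free, and everything else is obtained by regularizing $a$. First, $f\in\Cow(\R^2)$ is built into the definition of $f$ supplied by Proposition~\ref{prop06061521}, and part~(ii) of that proposition gives $f\in\Lip_{loc}(\R^2\setminus(\{0\}\times\R))$ --- note that $a$, being the Cantor staircase, is not locally Lipschitz (by the Remark after Proposition~\ref{prop06061521}, $f$ fails to be locally Lipschitz near every axis point $(0,\tau_0)$ with $\tau_0\in C$). To get the remaining assertion $f\in W^{1,2}_{loc}(\R^2)$ of~(i), and all of~(ii) apart from the area-minimality, I would introduce the usual piecewise-affine $n$-th approximant $a_n\colon\R\to[0,1]$ of the Cantor staircase (constant on the middle thirds removed up to level $n$, affine with slope $(3/2)^n$ on each of the $2^n$ surviving intervals, and $a_n\equiv0$ on $(-\infty,0]$, $a_n\equiv1$ on $[1,\infty)$): then $a_n$ is non-decreasing, globally Lipschitz, $a_n\to a$ uniformly, and $a_n'=(3/2)^n\mathbf 1_{E_n}$ a.e.\ with $\LL^1(E_n)=(2/3)^n$. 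By Proposition~\ref{prop06061521}.(ii) the associated $f_n\in\Cow(\R^2)$ (characterised by $f_n(s,a_n(\tau)s^2/2+\tau)=a_n(\tau)s$) is locally Lipschitz, so Proposition~\ref{prop06061521}.(iii)--(iv) apply to $f_n$; as in Lemma~\ref{lem07121556}, on $\{y\ne0\}$ one has $\de_tf_n(y,t)=\frac{a_n'(\tau)y}{1+a_n'(\tau)y^2/2}$ and $\de_yf_n(y,t)=a_n(\tau)\frac{1-a_n'(\tau)y^2/2}{1+a_n'(\tau)y^2/2}$, where $\tau=\tau_n(y,t)$ solves $t=a_n(\tau)y^2/2+\tau$.

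The next step is to show $f_n\to f$ in $W^{1,2}_{loc}(\R^2)$, which in particular yields $f\in W^{1,2}_{loc}(\R^2)$ and finishes~(i). Since $a_n\to a$ uniformly, $f_n\to f$, $\de_yf_n\to\de_yf$ and $\de_tf_n\to\de_tf$ pointwise a.e.; because $|f_n(y,t)|\le|y|$ and $|\de_yf_n|\le1$, dominated convergence handles $f_n$ and $\de_yf_n$. The term $\de_tf_n$ is the delicate one: it is supported in $\{\tau_n\in E_n\}$, it satisfies both $|\de_tf_n|\le2/|y|$ and $|\de_tf_n|\le(3/2)^n|y|$ (the latter on its support), and, for each fixed $y$, the $t$-section of its support has length $\le y^2/2+(2/3)^n$. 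Splitting the slab $\{|y|<\delta\}$ at the scale $|y|\sim(2/3)^{n/2}$ and combining these bounds, one checks $\int_{\{|y|<\delta\}\cap Q}|\de_tf_n|^2\dd\LL^2\le C\delta+o_n(1)$ for every bounded box $Q$, while on $\{|y|\ge\delta\}$ the uniform bound $|\de_tf_n|\le2/\delta$ and the a.e.\ convergence give $L^2$-convergence; letting $\delta\to0$ and using $\de_tf\in L^2_{loc}$ yields $\de_tf_n\to\de_tf$ in $L^2_{loc}$. \emph{This uniform-integrability analysis near the $t$-axis --- where $\de_tf$ is only barely square-integrable --- is the main technical obstacle}; the remaining computations are routine adaptations of Section~\ref{sec11241414}.

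For the stability, fix $\varphi\in C^\infty_c(\R^2)$. Proposition~\ref{prop06061521}.(iii) gives $I_{f_n}(\varphi)=0$, so by Lemma~\ref{lem08261641}.(i) and $f_n\to f$ in $W^{1,2}_{loc}$ we get $I_f(\varphi)=\lim_nI_{f_n}(\varphi)=0$, i.e.~\eqref{eq06221238}. By Proposition~\ref{prop06061521}.(iv), $II_{f_n}(\varphi)=G_n-B_n$, where
\[
G_n:=\int_{\R^2}(\de_s\tilde\varphi_n)^2\,\frac{a_n's^2/2+1}{(1+a_n^2)^{3/2}}\dd s\dd\tau\ \ge\ 0,\qquad
B_n:=\int_{\R^2}2\tilde\varphi_n^2\,\frac{a_n'}{(1+a_n^2)^{3/2}(a_n's^2/2+1)}\dd s\dd\tau\ \ge\ 0,
\]
and $\tilde\varphi_n(s,\tau)=\varphi(s,a_n(\tau)s^2/2+\tau)$. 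Estimating $\tilde\varphi_n^2\le\|\varphi\|_{\LI}^2$ and $(1+a_n^2)^{-3/2}\le1$, then integrating in $s$ via $\int_\R\frac{A}{As^2/2+1}\dd s=\pi\sqrt{2A}$ over $E_n$ (where $a_n'=(3/2)^n$) --- precisely the estimate behind~\eqref{eq06241642} --- one obtains $B_n\le2\sqrt2\,\pi\,\|\varphi\|_{\LI}^2\,(2/3)^n(3/2)^{n/2}=2\sqrt2\,\pi\,\|\varphi\|_{\LI}^2\,(2/3)^{n/2}\to0$. As $II_{f_n}(\varphi)\to II_f(\varphi)$ (Lemma~\ref{lem08261641}.(i)) and $B_n\to0$, we find $G_n\to II_f(\varphi)$, hence $II_f(\varphi)\ge0$, i.e.~\eqref{eq06221239b}; thus $f$ is stable. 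Since $\grad^ff$ equals $a(\tau)$ along the integral curves of $\grad^f$ and $a$ takes both values $0$ and $1$, $\grad^ff$ is not constant and $\Gamma_f$ is not an intrinsic plane.

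It remains to prove that the subgraph $G_f$ is a local perimeter minimizer in $\HH\setminus\{(0,0,z):z\in C\}$. Outside the vertical axis this is Proposition~\ref{prop06211845}, which applies since $f$ is of the form considered there ($a$ continuous and non-decreasing). For an axis point $p=(0,0,z_0)$ with $z_0\notin C$, I would write $G_f=\{(x,y,z)\in\HH:x\le f(y,z+\tfrac12 xy)\}$ and note that $a$ is locally constant near $z_0$, equal to some $c$ (namely $c=0$ if $z_0<0$, $c=1$ if $z_0>1$, and $c=a(z_0)$ if $z_0$ lies in a removed interval); hence, for $(x,y,z)$ near $p$, the argument $z+\tfrac12 xy$ stays near $z_0$ while $y$ stays near $0$, so $f(y,z+\tfrac12 xy)=cy$ and $G_f$ coincides in a neighbourhood of $p$ with $\{x\le cy\}$, the subgraph of the intrinsic plane of slope $c$. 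Since subgraphs of intrinsic planes are perimeter minimizers in $\HH$ \cite{MR2333095}, $G_f$ is a perimeter minimizer near $p$. Putting the two cases together gives~(ii).
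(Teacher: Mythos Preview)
Your proposal follows essentially the same route as the paper: the same piecewise-affine approximants $a_n$ of the Cantor staircase, the same appeal to Proposition~\ref{prop06061521}(iii)--(iv) for $I_{f_n}=0$ and the explicit form of $II_{f_n}$, the same $B_n\to 0$ estimate (the paper keeps the factor $(1+a_n^2)^{-3/2}$ and changes variable $v=a_n(\tau)$, you simply bound it by $1$; both give $O((2/3)^{n/2})$), and the same calibration/intrinsic-plane dichotomy for local minimality off $\{(0,0,z):z\in C\}$.

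There is one place where your sketch is looser than the paper's and the verification is not as immediate as you suggest. You assert that $\de_y f_n\to\de_y f$ pointwise a.e.\ ``since $a_n\to a$ uniformly'' and then invoke dominated convergence via $|\de_y f_n|\le 1$. The bound is fine, but the a.e.\ convergence is \emph{not} automatic on the positive-measure set $\{(y,t):t\in C_y,\ y\neq 0\}$: there your own formula gives
\[
\de_y f_n(y,t)=a_n(\tau_n)\,\frac{1-a_n'(\tau_n)y^2/2}{1+a_n'(\tau_n)y^2/2}\longrightarrow -a(\tau)
\]
(because $a_n'(\tau_n)=(3/2)^n\to\infty$), and to match this with $\de_y f$ you must first know $\de_y f(y,t)=-a(\tau)$ on that set. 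That in turn requires the computation $\de_t f=2/y$ on $C_y$ and then $\de_y f=\grad^f f - f\,\de_t f = a(\tau)-a(\tau)y\cdot\tfrac{2}{y}=-a(\tau)$. The paper proceeds in exactly this order: it first shows $f$ is ACL, computes $\de_t f$ explicitly on $C_y$, checks $\de_t f_n\to\de_t f$ in $L^2_{loc}$ by a direct integral (your uniform-integrability splitting is a legitimate alternative here), proves $f_n\to f$ and $\grad^{f_n}f_n\to\grad^f f$ uniformly on compacta, and only then deduces $\de_y f_n=\grad^{f_n}f_n-f_n\,\de_t f_n\to\de_y f$ in $L^2_{loc}$. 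So for $\de_y f_n$ you should either route through this identity, as the paper does, or first establish what $\de_y f$ equals on $C_y$ before appealing to dominated convergence.
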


The fact that $\Gamma_f$ is not an intrinsic plane is clear.
The fact that $\Gamma_f$ is locally area minimizing in $\HH\setminus\{(0,0,z):z\in C\}$
is proven as in Theorem~\ref{thm07111720}.$(v)$: More precisely, if $p\in\Gamma_f\setminus\{x=y=0\}$, then $\nu$ is a local calibration by Proposition~\ref{prop06211845}; 
if $p\in\{(0,0,z):z\notin C\}$, then there is $U\subset\HH$ open, $p\in U$ so that $U\cap\Gamma_f$ is a subset of an intrinsic plane.
% because of Proposition~\ref{prop06211845} and the fact that both $\Gamma_f\cap\{(x,y,z):z>1\}$ and $\Gamma_f\cap\{(x,y,z):z<0\}$ are vertical half-planes, which are local area minimizers.

The fact that $f\in \Cow(\R^2)\cap\Lip_{loc}(\R^2\setminus(\{0\}\times\R))$ follows from Proposition~\ref{prop06061521}.
For proving that $f$ is stable, 
we shall construct a Lipschitz approximation of $a$ and then complete the proof by approximation.
In particular, we show through Lemma~\ref{lem08270151} that $f\in W^{1,2}_{loc}(\R^2)$.
Finally we will estimate the first and the second variations of $f$ in Lemmas~\ref{lem08270101} and~\ref{lem08270102}.
\\

Define the closed sets $C(n)\subset[0,1]$, $n\in\N$, inductively as follows:
$C(0):=[0,1]$ and 
\[
C(n+1) := \frac13 C(n) \cup \left(\frac23 + \frac13 C(n)\right) .
\]
For $k,n\in\N$, define
%let $C(n,k)$ be the connected component of $C(n)$ of the form $[k/3^n,(k+1)/3^n]$, i.e.,
\[
C(n,k) :=
\begin{cases}
\left[\frac{k}{3^n},\frac{k+1}{3^n}\right] &\text{if }[k/3^n,(k+1)/3^n]\subset C(n) \\
\emptyset &\text{otherwise}.
\end{cases}
\]

Let $J_n$ be the collection of $k\in\{0,\dots,3^n\}$ such that $C(n,k)\neq\emptyset$.
We have $\# J_n=2^n$ and $C(n)=\bigsqcup_{k\in J_n} C(n,k)$.
Moreover, 
\[
C=\bigcap_{n=1}^\infty C(n)
\]
is the ternary Cantor set in $[0,1]$.
Set $q := \frac23$.

For $n\in\N$, let $a_n:\,[0,1]\to [0,1]$ be tha classical sequence of piecewise affine functions for which $a_n\to a$ uniformly on $[0,1]$ and $a$ agrees wiht  the Cantor staircase function. A possible way for defining $(a_n)_n$ is the following one.
For $n\in\N$, define $a_n:\R\to[0,1]$ as the absolutely continuous function $a_n(\tau)=\int_{-\infty}^\tau a_n'(r)\dd r$, where $a_n'(r) :=\frac1{q^n} \one_{C(n)}(r)$.
Then $a_n\to a$ uniformly on $\R$, where $a:\R\to\R$ is the function such that $a(\tau)=0$ for $\tau\le0$, $a(\tau)=1$ for $\tau\ge1$ and $a|_{[0,1]}$ is the Cantor function on the ternary Cantor set $C$.
Notice that $a(\tau)=a_n(\tau)$ for all $\tau\in\R\setminus C(n)$.
By continuity, the equality holds also on $\de C(n)$.

For $ y\in\R$ and $k\in J_n$ define the following subsets of $\R$:
\begin{align*}
C_ y(n,k) &:= \left\{ a_n(\tau) \frac{ y^2}2 + \tau : \tau\in C(n,k) \right\} \\
&= \left[ a\left(\frac{k}{3^n}\right) \frac{ y^2}2 + \frac{k}{3^n} , a\left(\frac{k+1}{3^n}\right) \frac{ y^2}2 + \frac{k+1}{3^n} \right] ;\\
C_ y(n) &:= \left\{a_n(\tau)\frac{ y^2}2 + \tau : \tau\in C(n) \right\} 
	= \bigsqcup_{k\in J_n} C_ y(n,k) ; \\
C_ y &:= \left\{a(\tau)\frac{ y^2}2 + \tau : \tau\in C \right\} 
	= \bigcap_{n=1}^\infty C_ y(n) .
\end{align*}

Notice that
\begin{align*}
	\LL^1(C_ y(n,k) &= \frac1{2^n} \left(\frac{ y^2}{2} + q^n\right), &
	\LL^1(C_ y(n)) &= \frac{ y^2}{2} + q^n , &
	\LL^1(C_ y) &= \frac{ y^2}{2} . 
\end{align*}

For each $n\in\N$, define $f_n\in\Cow$ as the function such that, for all $(s,\tau)\in\R^2$, $f_n\left(s,a_n(\tau)s^2/2 +\tau \right) = a_n(\tau) s$, as in Proposition~\ref{prop06061521}.
Since $a_n$ is locally Lipschitz, $f_n$ is locally Lipschitz as well, for all $n$.

\begin{lemma}[Approximation]\label{lem08270151}
	The sequence of functions $f_n$ defined above converge to $f$ in $W^{1,2}_{loc}(\R^2)$.
	In particular, $f\in W^{1,2}_{loc}(\R^2)$.
\end{lemma}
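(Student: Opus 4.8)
The plan is to use the explicit description of $f_n$ and $f$ from Proposition~\ref{prop06061521} together with the Lagrangian change of variables, and then to pass to the limit. Since $a_n\in\Lip_{loc}(\R)$, Proposition~\ref{prop06061521}$(ii)$ gives $f_n\in\Lip_{loc}(\R^2)$, whose associated biLipschitz Lagrangian homeomorphism (Theorem~\ref{existLpLip}) is $\Psi_n(s,\tau)=(s,a_n(\tau)s^2/2+\tau)$, with $\tilde f_n(s,\tau)=a_n(\tau)s$. Applying the change of variables rules~\eqref{eq08261555} (Theorem~\ref{thm05112336super}) with $\varphi=f_n$, and using $a_n'=q^{-n}\one_{C(n)}$ a.e., I would record that in the coordinates $(y,t)=\Psi_n(s,\tau)$ (so $s=y$ and $a_n(\tau)=f_n(y,t)/y$) one has the pointwise a.e.\ identities
\begin{equation*}
\de_tf_n=\frac{y}{y^2/2+q^n}\,\one_{\{t\in C_y(n)\}},\qquad
\de_yf_n=\frac{f_n}{y}\Bigl(\one_{\{t\notin C_y(n)\}}+\tfrac{q^n-y^2/2}{y^2/2+q^n}\,\one_{\{t\in C_y(n)\}}\Bigr).
\end{equation*}
In particular $|f_n|\le|y|$, $|\de_yf_n|\le1$, $|\de_tf_n|\le2/|y|$ everywhere, and $f(y,\cdot)$ is $(2/|y|)$-Lipschitz in $t$ by~\eqref{eq07121641}. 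Since $a_n=a$ off $C(n)$ while $\Psi_n$ carries $\{\tau\in C(n)\}$ onto $\{t\in C_y(n)\}$, the differences $f_n-f$, $\de_yf_n-\de_yf$, $\de_tf_n-\de_tf$ vanish outside $E_n:=\{(y,t):t\in C_y(n)\}$, and $E_n\subseteq E_*:=\{(y,t):0\le t\le y^2/2+1\}$.

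I would then establish pointwise a.e.\ convergence. For fixed $y\neq0$, $f_n(y,\cdot)$ and $f(y,\cdot)$ are $(2/|y|)$-Lipschitz in $t$ and coincide outside each component $C_y(n,k)$ of $C_y(n)$ and at its left endpoint, so $|f_n(y,t)-f(y,t)|\le\frac{4}{|y|}\LL^1(C_y(n,k))=\frac{4}{|y|}\,2^{-n}(y^2/2+q^n)\to0$ and $f_n\to f$ pointwise on $\{y\neq0\}$. Since $\LL^1(C_y(n)\setminus C_y)=q^n$ and $\sum_n q^n<\infty$, for a.e.\ $(y,t)$ either $t\in C_y$ (hence $t\in C_y(n)$ for all $n$, as $C\subseteq C(n)$ and $a$ is nondecreasing) or $t\notin C_y(n)$ for all large $n$; in the first case $\de_tf_n(y,t)=\frac{y}{y^2/2+q^n}\to\frac2y$ and $\de_yf_n(y,t)=\frac{f_n(y,t)}{y}\frac{q^n-y^2/2}{y^2/2+q^n}\to-\frac{f(y,t)}{y}$, in the second $\de_tf_n(y,t)=0$ and $\de_yf_n(y,t)=\frac{f_n(y,t)}{y}\to\frac{f(y,t)}{y}$. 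Hence $\de_tf_n\to\frac2y\one_{C_y}$ and $\de_yf_n\to\frac fy(1-2\one_{C_y})$ pointwise a.e.\ on $\R^2$.

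It then remains to upgrade these to $L^2_{loc}$. For $f_n$ and $\de_yf_n$ this is plain dominated convergence on any compact cube $K$: the differences are supported in $E_*\cap K$ and dominated by $2|y|\one_{E_*}$ and $2\one_{E_*}$, both in $L^2(K)$. The step I expect to be the main obstacle is $\de_tf_n\to\de_tf$ in $L^2_{loc}$: near the axis $\{y=0\}$ the only obvious majorant $4/|y|$ for $|\de_tf_n-\de_tf|$ is not locally square integrable, and $\LL^1(C_y)=y^2/2$ does not tend to $0$. To get around this I would split $K=[-R,R]^2$ into $\{|y|<\delta\}$ and $\{\delta\le|y|\le R\}$; on the thin strip I would use the slicewise identity
\begin{equation*}
\int_\R|\de_tf_n(y,t)|^2\dd t=\Bigl(\tfrac{y}{y^2/2+q^n}\Bigr)^{2}\LL^1(C_y(n))=\frac{y^2}{y^2/2+q^n}\le2,
\end{equation*}
together with the same bound for the a.e.\ limit of $\de_tf_n$ (by Fatou), to obtain $\int_{\{|y|<\delta\}\cap K}|\de_tf_n-\de_tf|^2\le16\delta$ \emph{uniformly in $n$}; away from the axis $|\de_tf_n-\de_tf|\le\frac4\delta\one_{E_*}\in L^2(K)$ and dominated convergence applies, so sending first $\delta\to0$ and then $n\to\infty$ makes $\int_K|\de_tf_n-\de_tf|^2$ arbitrarily small.

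Finally, $f_n\to f$ in $L^2_{loc}(\R^2)$ together with $\de_yf_n\to\de_yf$ and $\de_tf_n\to\de_tf$ in $L^2_{loc}(\R^2)$ forces $f\in W^{1,2}_{loc}(\R^2)$ --- the distributional gradient being continuous under $L^1_{loc}$ convergence --- with $\de_yf=\frac fy(1-2\one_{C_y})$ and $\de_tf=\frac2y\one_{C_y}$, and it gives $f_n\to f$ in $W^{1,2}_{loc}(\R^2)$, which is the assertion of the lemma. The only genuinely nonroutine ingredient is the equi-integrability of $|\de_tf_n|^2$ over a neighbourhood of the singular axis, delivered by the exact cancellation $\LL^1(C_y(n))\,(\tfrac{y}{y^2/2+q^n})^2=\tfrac{y^2}{y^2/2+q^n}$.
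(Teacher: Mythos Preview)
Your argument is correct, and the strategy differs from the paper's in two places worth noting.

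For the $t$-derivative, the paper does not split into a thin strip and its complement; instead it computes the double integral exactly:
\[
\int_0^\ell\!\!\int_\R |\de_t f_n-\de_t f|^2\,\dd t\,\dd y
= 4q^n\!\int_0^\ell\!\frac{\dd y}{y^2/2+q^n}
= 2\sqrt{2}\,q^{n/2}\arctan\!\bigl(\ell\,q^{-n/2}/\sqrt2\bigr)\to0,
\]
using that on each slice the difference is supported in $C_y(n)$ (with value $\frac{y}{y^2/2+q^n}$ off $C_y$ and $\frac{y}{y^2/2+q^n}-\frac2y$ on $C_y$). This yields an explicit rate $O(q^{n/2})$, while your equi-integrability argument is softer but more robust: it only uses the slicewise identity $\int_\R|\de_tf_n|^2\,\dd t=\tfrac{y^2}{y^2/2+q^n}\le2$, and would adapt immediately to any sequence $a_n\to a$ with this bound.

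For the $y$-derivative, the paper does not compute $\de_yf_n$ directly; it first shows $f_n\to f$ and $\nabla^{f_n}f_n\to\nabla^f f$ uniformly on compact sets (via a sandwich argument between consecutive gaps of $C(n)$), and then obtains $\de_yf_n\to\de_yf$ from the identity $\de_yf_n=\nabla^{f_n}f_n-f_n\,\de_tf_n$. Your direct computation of $\de_yf_n$ together with the pointwise Borel--Cantelli argument and the uniform bound $|\de_yf_n|\le1$ gives the same conclusion by a plain dominated convergence, bypassing the uniform-convergence step entirely; this is arguably cleaner.

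One minor slip: in the final step you write ``sending first $\delta\to0$ and then $n\to\infty$'', but the logic you actually use is the reverse order, namely $\limsup_{n\to\infty}\int_K|\de_tf_n-\de_tf|^2\le 16\delta$ for every $\delta>0$, hence the $\limsup$ vanishes. Adjust the phrasing accordingly.
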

\begin{proof}
	First of all, we claim that $f$ is absolutely continuous along almost all coordinates lines.
	Indeed, by Proposition~\ref{prop06061521}, $f$ is locally Lipschitz on $\R^2\setminus\{ y=0\}$ and thus $ t\mapsto f( y, t)$ is absolutely continuous if $ y\neq0$.
	Moreover, if $ t\notin C$, then $ y\mapsto f( y, t)$ is constant in a neighborhood of $0$, so it is absolutely continuous on $\R$.
	Since $\LL^1(C)=0$, this completes the proof of the claim.
	
	Therefore, the distributional derivatives $\de_ y f$ and $\de_ t f$ are functions and coincide almost everywhere with the derivatives of $f$ along the coordinates lines.
	
	We compute
	\begin{equation}\label{eq08270128}
	\de_ t f_n( y, t) =
	\begin{cases}
	\frac{ y}{ y^2/2 + q^n } &\text{if }  t\in C_ y(n) \\
	0 &\text{if }  t\notin C_ y(n) .
	\end{cases}
	\end{equation}
	Since $a_n$ is piecewise affine, then $ t\mapsto f_n( y, t)$ is also piecewise affine.
	If $ t\notin C_ y(n)$, then $\de_ t f_n( y, t)=0$.
	If $k\in J_n$, then $ t\mapsto\de_ t f_n( y, t)$ is constant on $C_ y(n,k) = [ t_1, t_2]$.
	Thus
	\begin{align*}
	\de_ t f_n( y, t)
	&= \frac{ f_n( y, t_2)-f_n( y, t_1) }{  t_2- t_1 }
	= \frac{ a(\frac{k+1}{3^n}) y - a(\frac{k}{3^n}) y }{ a(\frac{k+1}{3^n}) y^2/2 + \frac{k+1}{3^n} - a(\frac{k}{3^n}) y^2/2 - \frac{k}{3^n} }\\
	&= \frac{ y}{ y^2/2 + q^n }
	\end{align*}
	This shows~\eqref{eq08270128}.
	Next, we show that 
	\begin{equation}\label{eq08270129}
	\de_ t f( y, t) =
	\begin{cases}
	\frac2 y &\text{for a.e.~} t\in C_ y \\
	0 &\text{otherwise} .
	\end{cases}
	\end{equation}
	Fix $ y\in\R$.
	So, if $ t\notin C_ y$, then $ t'\mapsto f( y, t')$ is constant in a neighborhood of $ t$, hence $\de_ t f( y, t)=0$.
	If $ y=0$, then $C_0=C$ has measure zero.
	Let $ y\neq0$ and $ t\in C_ y$ be such that $ t'\mapsto f( y, t')$ is differentiable at $ t$.
	Then, if $ t=a(\tau)\frac{ y^2}{2}+\tau$ and $ t'=a(\tau')\frac{ y^2}{2}+\tau'$, we have
	\begin{multline*}
	\limsup_{ t'\to t} \frac{|f( y, t')-f( y, t)|}{| t'- t|}
	= \limsup_{\tau'\to\tau} \frac{ (a(\tau')-a(\tau))  y }{ (a(\tau')-a(\tau) ) \frac { y^2}2 + (\tau'-\tau) } \\
	\le \frac2 y \limsup_{\tau'\to\tau} \frac{1}{1+\frac2{ y^2} \frac{\tau'-\tau}{a(\tau')-a(\tau)}}
	\le \frac2 y .
	\end{multline*}
	Moreover, if $ y>0$ is such that $ t\mapsto f( y, t)$ is absolutely continuous, which happens for almost every $ y\in\R$ by Proposition ~\ref{prop06061521}, from the inequalities
	\[
	 y 
	= f( y,\frac12 y^2+1)
	= \int_0^{\frac12 y^2+1} \de_ t f( y, t) \dd t
	= \int_{C_ y} \de_ t f( y, t) \dd t
	\le \frac2{ y} |C_ y|
	=  y
	\]
	follows that $\de_ t f( y, t)= \frac2 y$.
	The same strategy applies to the case $ y<0$ and so we have~\eqref{eq08270129}.
	
	Now we prove the first convergence, that is,
	\begin{equation}\label{eq08270132}
	\de_ t f_n\to\de_ t f \text{ in }\LL^2_{loc}(\R^2) .
	\end{equation}
	We directly compute
	\begin{align*}
	\int_0^\ell &\int_\R |\de_ t f_n( y, t)-\de_ t f( y, t)|^2 \dd t \dd y \\
	&= \int_0^\ell \left[
		\int_{C_ y(n)\setminus C_ y} \left(\frac{ y}{ y^2/2+q^n}\right)^2 \dd t
		+ \int_{C_ y} \left|\frac{ y}{ y^2/2+q^n}-\frac2 y\right|^2 \dd t
		\right]\dd y \\
	&= \int_0^\ell \left[
		\left(\frac{ y}{ y^2/2+q^n}\right)^2 \left(|C_ y(n)|-|C_ y|\right)
		+  \left|\frac{ y}{ y^2/2+q^n}-\frac2 y\right|^2 |C_ y|
		\right]\dd y \\
	&= q^n \int_0^\ell \left(\frac{ y}{ y^2/2+q^n}\right)^2  \dd y
		+ 2 q^{2n} \int_0^\ell \frac{1}{( y^2/2+q^n)^2} \dd y \\
	&= 4q^n \int_0^\ell \frac{1}{ y^2/2+q^n} \dd y \\
	&= 2 \int_0^{\ell/\sqrt{2q^n}} \frac{\sqrt{2q^n}}{x^2+1} \dd x
	= 2\sqrt{2} q^{n/2} \arctan\left(\frac{\ell}{\sqrt2} q^{-n/2}\right) ,
	\end{align*}
	The last expression goes to $0$ as $n\to\infty$, and so~\eqref{eq08270132} is proven.
	
	The next step is to show that 
	\begin{equation}\label{eq08270134}
	f_n\to f\text{ and }\grad^{f_n}f_n\to \grad^ff\text{ uniformly on compact sets}.
	\end{equation}
	First of all, if $\epsilon>0$ and $K>0$, then there is $N>0$ such that for all $ t\in\R$, all $ y\in\R$ with $| y|\le K$ and all $n\ge N$ there are $\tau_1,\tau_2\in\R\setminus C(n)$ such that 
	\[
	 t_1:=a(\tau_1)\frac{ y^2}2+\tau_1 
	\le  t \le 
	a(\tau_2)\frac{ y^2}2+\tau_2=: t_2,
	\] 
	and $a(\tau_2)-a(\tau_1) \le \epsilon$.
	
	Secondly, notice that $a_n=a$ on $\R\setminus C(n)$.
	So,
	\begin{align*}
	&\grad^{f_n}f_n( y, t) - \grad^ff( y, t) 
	\le \grad^{f_n}f_n( y, t_2) - \grad^ff( y, t_1)
	= a(\tau_2) - a(\tau_1)
	\le \epsilon , \\
	&\grad^ff( y, t) - \grad^{f_n}f_n( y, t) 
	\le \grad^ff( y, t_2) - \grad^{f_n}f_n( y, t_1)
	= a(\tau_2) - a(\tau_1)
	\le \epsilon .
	\end{align*}
	Therefore, there is $N\in\N$ such that for all $( y, t)\in\R^2$ with $| y|\le K$ and all $n\ge N$, $|\grad^{f_n}f_n( y, t) - \grad^ff( y, t)|\le \epsilon$, i.e., $\grad^{f_n}f_n\to \grad^ff$ uniformly on compact sets.
	
	Next, notice that $f( y, t)=\grad^ff( y, t)\, y$ and $f_n( y, t)=\grad^{f_n}f_n( y, t)\, y$.
	Therefore, $f_n\to f$ uniformly on compact sets as well and \eqref{eq08270134} is proven.
	
	Finally, we conclude that
	\begin{equation}
	\de_ y f_n\to\de_ y f\text{ in }\LL^2_{loc}(\R^2) .
	\end{equation}
	Indeed, since $f$ is ACL, we have, whenever $\grad f_n$ exist for all $n$,
	\[
	\de_ y f_n = \grad^{f_n}f_n - f_n\de_ t f_n .
	\]
	Since the right hand side converges to $\de_ y f$ in $\LL^2_{loc}(\R^2)$, the left hand side does the same.
	The proof is complete.
\end{proof}

\begin{lemma}\label{lem08270101}
	The function $f$ satisfies \eqref{eq06221238}.
\end{lemma}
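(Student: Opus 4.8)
The plan is to reduce the claim to results already in place: the Lipschitz approximation of Lemma~\ref{lem08270151}, the vanishing of the first variation for graphical strips in Proposition~\ref{prop06061521}, and the continuity of $I_f$ in Lemma~\ref{lem08261641}. Fix $\varphi\in C^\infty_c(\R^2)$ and recall the sequence $f_n\in\Cow(\R^2)$ built from the piecewise affine functions $a_n$. Since each $a_n$ is piecewise affine it belongs to $\Lip_{loc}(\R)$, so Proposition~\ref{prop06061521}.(ii) gives $f_n\in\Lip_{loc}(\R^2)$ and Proposition~\ref{prop06061521}.(iii) gives $I_{f_n}(\varphi)=0$ for every $n$. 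Concretely, after the change of variables via the biLipschitz Lagrangian homeomorphism $\Psi_n(s,\tau)=(s,a_n(\tau)s^2/2+\tau)$ the integrand of $I_{f_n}$ becomes a total $s$-derivative of a compactly supported function, so the integral vanishes.

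Next I would pass to the limit. By Lemma~\ref{lem08270151}, $f_n\to f$ in $W^{1,2}_{loc}(\R^2)$, and $\spt(\varphi)\Subset\R^2$, so part (i) of Lemma~\ref{lem08261641} applies and yields $I_f(\varphi)=\lim_{n\to\infty}I_{f_n}(\varphi)=0$. Since $\varphi\in C^\infty_c(\R^2)$ was arbitrary, $f$ satisfies \eqref{eq06221238}. This is, almost verbatim, the argument used for the first example in the proof of ``$f$ satisfies \eqref{eq06221238}'' inside Lemma~\ref{lem07121720}.

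I do not expect a genuine obstacle at this point: the substantive work has already been carried out upstream. The delicate ingredient is the $W^{1,2}_{loc}$-convergence $f_n\to f$ of Lemma~\ref{lem08270151}, which is precisely what makes the continuity Lemma~\ref{lem08261641} applicable. A direct verification of $I_f(\varphi)=0$ would instead require changing variables with the Lagrangian homeomorphism of $f$ itself, and that map is only a homeomorphism, not locally biLipschitz, near $\{0\}\times C$ (cf.\ Proposition~\ref{prop06061521}.(ii) and the subsequent remark), so Theorem~\ref{thm05112336super} would not apply and the computation would be considerably more delicate. Routing the argument through the Lipschitz approximants $f_n$ sidesteps this issue entirely.
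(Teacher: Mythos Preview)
Your proposal is correct and follows essentially the same approach as the paper's own proof: use Proposition~\ref{prop06061521}.(iii) to get $I_{f_n}(\varphi)=0$, use Lemma~\ref{lem08270151} for the $W^{1,2}_{loc}$-convergence $f_n\to f$, and then invoke Lemma~\ref{lem08261641} to pass to the limit. Your additional remarks about why a direct change of variables for $f$ itself would be problematic are accurate but not needed for the proof.
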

\begin{proof}
	Let $f_n$ as above and $\varphi\in C^\infty_c(\R^2)$.
	Since $f_n\to f$ in $W^{1,2}_{loc}(\R^2)$, then $I_f(\varphi)=\lim_{n\to\infty}I_{f_n}(\varphi)$ by Lemma~\ref{lem08261641}.
	By Proposition~\ref{prop06061521}.(iii), $I_{f_n}(\varphi)=0$ for all $n$, thus $I_f(\varphi)=0$.
\end{proof}

\begin{lemma}\label{lem08270102}
	The function $f$ satisfies \eqref{eq06221239b}.
\end{lemma}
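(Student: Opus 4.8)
The plan is to follow the approximation argument used for the first example in Lemma~\ref{lem07121720}, now with the sequence $(f_n)_n$ built from the piecewise affine functions $a_n$. By Lemma~\ref{lem08270151} we have $f_n\to f$ in $W^{1,2}_{loc}(\R^2)$, so Lemma~\ref{lem08261641} gives $II_f(\varphi)=\lim_{n\to\infty}II_{f_n}(\varphi)$ for every $\varphi\in C^\infty_c(\R^2)$. Since each $a_n$ is locally Lipschitz, Proposition~\ref{prop06061521}.(iv) applies to $f_n$ and yields
\[
II_{f_n}(\varphi)=\int_{\R^2}(\de_s\tilde\varphi_n)^2\,\frac{\tfrac{a_n'}2 s^2+1}{(1+a_n^2)^{3/2}}\,\dd s\dd\tau-2\int_{\R^2}\tilde\varphi_n^2\,\frac{a_n'}{(1+a_n^2)^{3/2}\big(\tfrac{a_n'}2 s^2+1\big)}\,\dd s\dd\tau,
\]
where $\tilde\varphi_n(s,\tau):=\varphi(s,a_n(\tau)s^2/2+\tau)$ and $a_n,a_n'$ are evaluated at $\tau$. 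The first integral is non-negative, and since $a_n'\ge0$ the second one is non-negative as well, so $II_{f_n}(\varphi)\ge-B_n$, where $B_n$ denotes twice the second integral. It therefore suffices to prove $B_n\to0$.

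For the estimate of $B_n$, set $M:=\sup_{\R^2}\varphi^2$ and fix $K>0$ with $\spt\varphi\subset[-K,K]^2$. Since $0\le a_n\le1$, the set $\{\tilde\varphi_n\ne0\}$ is contained in $[-K,K]\times[-K-K^2/2,K]$ for every $n$. Recall that $a_n'=q^{-n}\one_{C(n)}$ with $q=2/3$, that $\LL^1(C(n))=q^n$, and that $(1+a_n^2)^{3/2}\ge1$. Bounding $\tilde\varphi_n^2\le M$ and integrating first in $\tau$ over $C(n)$, the factor $q^{-n}$ is exactly cancelled by $\LL^1(C(n))=q^n$, so that
\[
0\le B_n\le 2M\int_{-K}^{K}\frac{\dd s}{\tfrac{s^2}{2q^n}+1}=2M\sqrt{2q^n}\int_{-K/\sqrt{2q^n}}^{K/\sqrt{2q^n}}\frac{\dd v}{v^2+1}\le 2\pi M\sqrt{2q^n}.
\]
As $0<q<1$, the right-hand side tends to $0$, hence $B_n\to0$. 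Passing to the limit in $II_{f_n}(\varphi)\ge-B_n$ gives $II_f(\varphi)=\lim_{n\to\infty}II_{f_n}(\varphi)\ge0$, which is~\eqref{eq06221239b}.

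The computation is essentially the one already carried out for the first example; the only point needing a little care is the uniform (in $n$) control of the $(s,\tau)$-support of $\tilde\varphi_n$, which is what allows one to use $\LL^1(C(n)\cap\{\text{relevant }\tau\})\le\LL^1(C(n))=q^n$, and this is immediate from $0\le a_n\le1$. So no genuine obstacle arises here: the substantive work is hidden in Lemma~\ref{lem08270151}, whose role is precisely to make the identity $II_f(\varphi)=\lim_n II_{f_n}(\varphi)$ legitimate in the first place.
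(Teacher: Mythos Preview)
Your proof is correct and follows the same approximation strategy as the paper's own argument. The only cosmetic difference is in the estimate of the negative term: the paper keeps the factor $(1+a_n^2)^{-3/2}$ and evaluates $\int_{C(n)}\frac{a_n'}{(1+a_n^2)^{3/2}}\,\dd\tau=\tfrac1{\sqrt2}$ via the substitution $v=a_n(\tau)$, whereas you simply bound $(1+a_n^2)^{3/2}\ge1$ and use $\LL^1(C(n))=q^n$; both routes give a bound of order $\sqrt{q^n}\to0$.
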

\begin{proof}
	Fix $\varphi\in\Co^\infty_c(\R^2)$.
	Since $f_n\to f$ in $W^{1,2}_{loc}(\R^2)$ and $\grad^{f_n}f_n\to\grad^ff$ uniformly on compact sets, then 
	\[
	II_f(\varphi) = \lim_{n\to\infty} II_{f_n}(\varphi) .
	\]
	Since $a_n$ is locally Lipschitz, by Proposition~\ref{prop06061521}.$(iv)$, we have
	\[
	II_{f_n}(\varphi) 
	= \int_{\R^2}
		(\de_s\tilde\varphi_n)^2 \frac{(\frac{a_n'}2s^2+1)}{(1+a_n^2)^{3/2}} 
		-2 \tilde\varphi_n^2 \frac{ a_n' }{ (1+(a_n)^2)^{3/2} (\frac{a_n'}2s^2+1) }
	\dd s\dd\tau ,
	\]
	where $\tilde\varphi_n(s,\tau) := \varphi(s,a_n(\tau)\frac{s^2}{2}+\tau)$.
	So, we only need to show that
	\[
	\limsup_{n\to\infty} 
	\int_{\R^2}
		 \tilde\varphi_n^2 \frac{ a_n' }{ (1+(a_n)^2)^{3/2} (\frac{a_n'}2t^2+1) }
	\dd t\dd\tau 
	\le 0 .
	\]
	Let $M:=\sup_{p\in\R^2}\varphi(p)^2$.
	Then
	\begin{multline*}
	\int_{\R^2}
		 \tilde\varphi_n^2 \frac{ a_n' }{ (1+(a_n)^2)^{3/2} (\frac{a_n'}2s^2+1) }
	\dd t\dd\tau \\
	\le 
	M \int_{C(n)}
		 \frac{ a_n' }{ (1+(a_n)^2)^{3/2} } \int_\R\frac{1}{ (\frac{a_n'}{2}s^2+1) } \dd t
	\dd\tau
	\end{multline*}
	If $\tau\in C(n)$, then $a_n'=q^{-n}$ and, after substituting $v=\sqrt{\frac{1}{2q^n}}s$, $\dd v = \sqrt{\frac{1}{2q^n}}\dd s$
	\[
	\int_\R\frac{1}{ (\frac{a_n'}{2}s^2+1) } \dd s
	= \int_\R \frac{\sqrt{2q^n}}{v^2+1} \dd v
	= \sqrt{2q^n} \pi .
	\]
	Moreover,
	\[
	\int_{C(n)} \frac{ a_n' }{ (1+(a_n)^2)^{3/2} } \dd \tau
	=  \sum_{k\in J_k} \int_{\frac{k}{3^n}}^{\frac{k+1}{3^n}} \frac{a_n'}{ (1+(a_n)^2)^{3/2} } \dd \tau
	\]
	For each $k\in J_k$, make the substitution $v=a_n(\tau)$, $\dd v=a_n'\dd \tau$, $\frac{k}{3^n}\mapsto a_n(\frac{k}{3^n})=a(\frac{k}{3^n})$, $\frac{k+1}{3^n}\mapsto a_n(\frac{k+1}{3^n})=a(\frac{k+1}{3^n})$
	\[
	\int_{\frac{k}{3^n}}^{\frac{k+1}{3^n}} \frac{a_n'}{ (1+(a_n)^2)^{3/2} } \dd \tau
	= \int_{a(\frac{k}{3^n})}^{a(\frac{k+1}{3^n})} \frac{1}{ (1+v^2)^{3/2} } \dd v
	\]
	So,
	\[
	\sum_{k\in J_k} \int_{\frac{k}{3^n}}^{\frac{k+1}{3^n}} \frac{a_n'}{ (1+(a_n)^2)^{3/2} } \dd \tau
	= \int_0^1 \frac{1}{ (1+v^2)^{3/2} } \dd v
	= \frac1{\sqrt2} .
	\]
	All in all, we have
	\[
	\limsup_{n\to\infty} 
	\int_{\R^2}
		 \varphi_n^2 \frac{ a_n' }{ (1+(a_n)^2)^{3/2} (\frac{a_n'}2s^2+1) }
	\dd s\dd\tau
	\le \limsup_{n\to\infty} M\sqrt{q^n} \pi
	= 0 .
	\]
\end{proof}

%\nocite{*}
\printbibliography
\end{document}